\newcommand\myshade{85}
\colorlet{mylinkcolor}{violet}
\colorlet{mycitecolor}{blue}
\colorlet{myurlcolor}{orange}
\newtheorem{theorem}{Theorem}[section]
\newtheorem{remark}[theorem]{Remark}
\newtheorem{prop}[theorem]{Proposition}
\newtheorem{corollary}[theorem]{Corollary}
\newtheorem{lemma}[theorem]{Lemma}
\theoremstyle{definition}
\newcommand{\Z}{\mathbb{Z}}
\DeclareSymbolFont{bbold}{U}{bbold}{m}{n}
\DeclareSymbolFontAlphabet{\mathbbold}{bbold}
\begin{document}

\thispagestyle{plain}
\begin{center}
    \Large
    \textbf{Arcs Intersecting at Most Once on the 4-Punctured Sphere}
        
    \vspace{0.4cm}
    \large
    \vspace{0.4cm}
    \textbf{Paul Tee}
       
    \vspace{0.9cm}
    \textbf{Abstract}
\end{center}
We classify maximal systems of arcs which intersect at most once on the 4-punctured sphere.

\tableofcontents
\newpage

\section{Acknowledgements}
First and foremost, I would like to thank my advisor Piotr Przytycki for not only supporting me academically but also for helping me adjust to a new life in a difficult time. Thank you to Denali Relles for noticing an extra maximal 1-system in an earlier version of this paper. Thank you to my wonderful girlfriend who has been my rock throughout these two years. Thank you to parents who have been there for me throughout this journey of life. Lastly, thank you to my friends, especially to those in and around the reading room. 

\section{Introduction}
In this paper, we classify maximal systems of arcs which intersect at most once on the 4-punctured sphere. We begin with a survey of the relevant definitions following \cite{smith2017arcs}.

Let $S$ be a surface of finite type with punctures and Euler characteristic $\chi<0$. In certain natural situations, we choose to call punctures \textit{vertices}. An \textit{arc} on $S$ is a map from $(0, 1)$ to $S$ that is proper. A proper map induces a map between topological ends of spaces, and in this sense each endpoint of $(0, 1)$ is sent to a puncture of $S$. We will say that the arc is \textit{between} these punctures. An arc is \textit{simple} if it is an embedding. In that case we can and will identify the arc with its image in $S$. An arc $\alpha$ is \textit{essential} if it cannot be homotoped into a puncture in the sense that there is no proper map $(0,1)\times [0,1]\to S$ which, restricted to $(0,1) \times 0$, equals $\alpha$. All arcs in this paper are simple and essential. An arc is a \textit{loop} if it starts and ends at the same puncture. In this case, we say the loop is \textit{based} at the puncture. A \textit{homotopy} between arcs $\alpha$ and $\beta$ is a proper map $(0,1)\times [0,1]\to S$ which, restricted to $(0, 1) \times 0$ equals $\alpha$, and restricted to $(0, 1) \times 1$ equals $\beta$. In particular, $\alpha$ and $\beta$ start at the same puncture and end at the same puncture. 

We say that arcs $\alpha$ and $\beta$ are in minimal position, if the number of intersection
points $|\alpha\cap \beta|$ cannot be decreased by a homotopy. A \textit{bigon} (respectively, \textit{half-bigon}) between arcs $\alpha$ and $\beta$ is an embedded closed disc $B\subset S$ (respectively,
properly embedded half-disc $B=[0, 1] \times [0, 1) \subset S$) such that $B\cap (\alpha\cup \beta) = \partial B$
and both $\partial B\cap \alpha$ and $\partial B\cap \beta$ are connected. It is a well known fact, we call the \textit{bigon criterion for arcs}, that $\alpha$ and $\beta$ are in minimal position if and only if they are transverse and there is no bigon or half-bigon between them.

We define a \textit{$k$-system} $\mathbb{A}$ on a surface $S$ to be a collection of pairwise nonhomotopic arcs such that any two arcs intersect at most $k$ times. Note that a $0$-system is a set of disjoint arcs. We define \textit{k}-systems $\mathbb{A},\mathbb{B}$ to be \textit{equivalent} if for every arc $\alpha\in \mathbb{A}$ there exists an arc $\beta\in \mathbb{B}$ such that $\alpha\simeq \beta$, where $\simeq$ denotes homotopy. We denote this by $\mathbb{A}\simeq \mathbb{B}$. Define the \textit{degree} of $\mathbb{A}$ to be $\deg(\mathbb{A}):= (\deg p_i)_{i\in \mathcal{P}}$ where $\mathcal{P}$ denotes the set up punctures equipped with a total order. If $\mathbb{A}\simeq \mathbb{B}$, then up to reordering, $\deg(\mathbb{A})=\deg(\mathbb{B})$.

We say $\mathbb{A}$ is  \textit{saturated} if for any arc $\sigma$, $\{\sigma\}\cup \mathbb{A}$ is no longer a $k$-system. Theorem 1.5 of     \cite{p2015} implies for a fixed surface $S$, the cardinality of any saturated $k$-system is bounded by a constant that depends only on $S$ and $k$. Thus, we say a saturated $\mathbb{A}$ is a \textit{maximal} $k$-system if for any saturated $\mathbb{B}$, we have 
$$|\mathbb{B} | \leq |\mathbb{A}|.$$ 

Consider a $k$-system $\mathbb{A}$ where the arcs are pairwise in minimal position. There is a canonical subset, the \textit{disjoint subset},
$\mathbb{J}\subset \mathbb{A}$ consisting of arcs which are disjoint all other arcs in $\mathbb{A}$. In other words, 
$$\mathbb{J}:=\{\sigma\in \mathbb{A}| \sigma \cap \tau = \emptyset,  \forall \tau\in \mathbb{A}-\{\sigma\}\}$$

\begin{remark}\label{max0sys}
Any saturated 0-system on $S$ has cardinality $3|\chi|$.
\begin{proof}
Choose a hyperbolic metric for $S$, and then by the  Gauss--Bonnet Theorem, the area of $S$ is $2\pi|\chi|$. Any saturated system forms an ideal triangulation of $S$. As any ideal triangle has area $\pi$, we have $2|\chi|$ triangles, so $6|\chi|$ edges. Every edge is counted twice in a triangulation, so we have $3|\chi|$ arcs.
\end{proof}
\end{remark}

The following provides an analogous statement of Remark \ref{max0sys} for 1-systems.

\begin{theorem}\label{max1sys}
\cite[Theorem 1.2]{p2015} Any maximal 1-system on $S$ has cardinality $2|\chi|(|\chi|+1)$.
\end{theorem}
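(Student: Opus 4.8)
The plan is to pin down the extremal value $2|\chi|(|\chi|+1)$ by proving a matching lower bound (a construction) and upper bound (an extremal argument). Remark~\ref{max0sys} already supplies $3|\chi|$ pairwise disjoint arcs, so the entire content lies in the quadratic term: a maximal $1$-system is far larger than any $0$-system, and one must both exhibit and cap this surplus. I would handle the two inequalities by quite different techniques.

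For the lower bound I would produce an explicit family of size $2|\chi|(|\chi|+1)$. Beginning from an ideal triangulation $T$ --- whose $3|\chi|$ edges form a $0$-system --- I would adjoin arcs that cut across small blocks of adjacent triangles, the prototype being the diagonal ``flip'' of the quadrilateral formed by two triangles sharing an edge, which meets that edge exactly once. The delicate point is to choose these additional arcs so that they remain pairwise in minimal position and never intersect twice; I expect the most transparent route is an induction on complexity, enlarging $S$ by gluing on a pair of pants or adjoining a puncture and checking that the number of arcs one can add obeys exactly the recursion dictated by $2|\chi|(|\chi|+1)$.

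For the upper bound I would induct on $|\chi|$ via cutting along an arc. Given a maximal $1$-system $\mathbb{A}$ in minimal position, fix $a\in\mathbb{A}$ and cut $S$ along $a$ to obtain a surface $S'$ with $|\chi(S')|=|\chi|-1$; the arcs of $\mathbb{A}$ disjoint from $a$ survive as a $1$-system on $S'$ and are controlled by the inductive hypothesis, so what remains is to bound the arcs crossing $a$, each exactly once. This residual count is where the real work sits: the arcs crossing $a$ acquire an endpoint on the boundary created by the cut, so the induction must be run for surfaces with boundary (or the cut locus capped off carefully), and one needs a sharp bound on how many almost-disjoint arcs can cross a fixed arc. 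The main obstacle I anticipate is making the arithmetic close exactly --- forcing the disjoint contribution $2(|\chi|-1)|\chi|$ and the crossing contribution to sum to precisely $2|\chi|(|\chi|+1)$ --- which will require both a tight estimate on the crossing arcs and a direct verification of the base cases of small complexity, such as the thrice-punctured sphere, where the formula predicts $4$ arcs and can be checked by hand.
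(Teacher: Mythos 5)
This paper never proves Theorem~\ref{max1sys}: it is imported wholesale from \cite[Theorem 1.2]{p2015}, so there is no in-paper proof to match your proposal against; measured against the argument in the cited reference, your plan has genuine gaps on both sides of the inequality.

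For the lower bound, arcs that ``cut across small blocks of adjacent triangles'' cannot produce quadratically many arcs: the edges of an ideal triangulation give $3|\chi|$ arcs and the flip diagonals at most one more per edge, so your prototype yields on the order of $6|\chi|$ arcs, which equals $2|\chi|(|\chi|+1)$ only when $|\chi|=2$ and falls strictly short for every larger surface. The pants-gluing induction is blocked by the same locality obstruction: $f(x)=2x(x+1)$ is strictly superadditive ($f(x+y)-f(x)-f(y)=4xy>0$), so arcs supported in glued subsurfaces can never close the recursion, and each inductive step would need roughly $4|\chi|$ new arcs each crossing the \emph{entire} previously built surface --- precisely the ``delicate point'' you defer. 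The construction that actually achieves the bound is global: cut $S$ along $|\chi|+1$ disjoint arcs into a single ideal polygon with $2(|\chi|+1)$ sides and take all sides together with all diagonals; two diagonals are chords of one polygon and hence meet at most once, and the count is $(|\chi|+1)+\binom{2(|\chi|+1)}{2}-2(|\chi|+1)=2|\chi|(|\chi|+1)$. This is exactly what the systems $J_{3a},J_{3b}$ of the present paper exhibit on $\Sigma$: the tree $\mathbb{J}$ plus the nine diagonals of the hexagon. Note the diagonals traverse up to $2|\chi|$ triangles, so no family of ``locally supported'' arcs can substitute for them.

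For the upper bound, your cut-and-induct scheme reduces to the unproved claim that some $a\in\mathbb{A}$ is crossed few enough times: with the inductive bound $2(|\chi|-1)|\chi|$ on the surviving arcs, the arithmetic $1+2(|\chi|-1)|\chi|+c\le 2|\chi|(|\chi|+1)$ forces $c\le 4|\chi|-1$ crossing arcs, and you supply no argument that such an $a$ exists. A priori an arc in a $1$-system can be crossed by up to $|\mathbb{A}|-1\approx 2|\chi|(|\chi|+1)$ others, the disjoint subset $\mathbb{J}$ can be empty (witness $J_0$ in this paper), and after cutting one is on a surface with boundary where the closed formula no longer applies verbatim --- all points you flag but do not resolve, and which together constitute essentially the whole theorem rather than a residual step. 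The reference does not run this induction: its upper bound comes from a direct counting argument carried out on the whole surface with the arcs realized geodesically, not from cutting along a single arc. Your base-case check (four arcs on the thrice-punctured sphere) is correct, but as it stands the proposal is a reduction of the theorem to two unestablished lemmas, not a proof.
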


Throughout, denote $\Sigma$ as the four-punctured sphere, and let $\overline{\Sigma}$ denote the compactification of $\Sigma$ by points labelled by $\mathcal{P}=(a,b,c,d)$ corresponding to the punctures. We denote our working representation of $\Sigma$ (equivalently, $\overline{\Sigma}$) as the 3-punctured disk $D$ with boundary $d$, identified as shown in Figure \ref{fig:workingrep}. Then, each arc ending (or starting) at $d$ can
be homotoped to an arc that converges to a point on $\partial D$, and we will be only
considering such arcs. Note that homotopic arcs ending at $d$ might converge to
different points of $\partial D$.

\begin{figure}[htp]
    \centering
    \includegraphics[width=4cm]{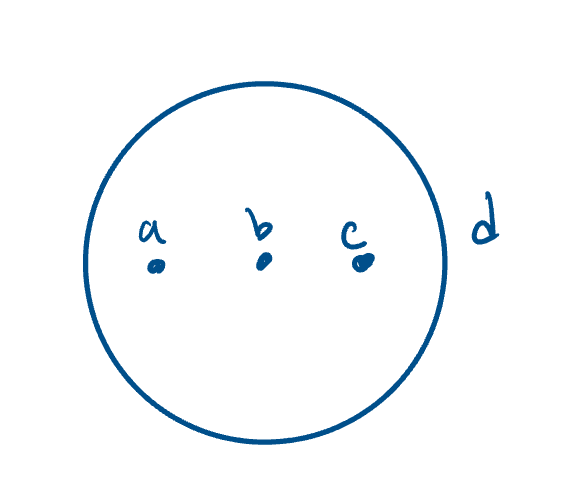}
    \caption{$\Sigma$ as the 3-punctured open disk}
    \label{fig:workingrep}
\end{figure}

Using this representation, we state our two main results. Our classification is taken with respect to self-homeomorphisms of $\Sigma$ and equivalence of $\mathbb{A}$. 
\begin{lemma}
There are six saturated 0-systems on $\Sigma$, as show in Figure \ref{fig:max0}.
\begin{figure}[htp]
    \centering
    \includegraphics[width=7cm]{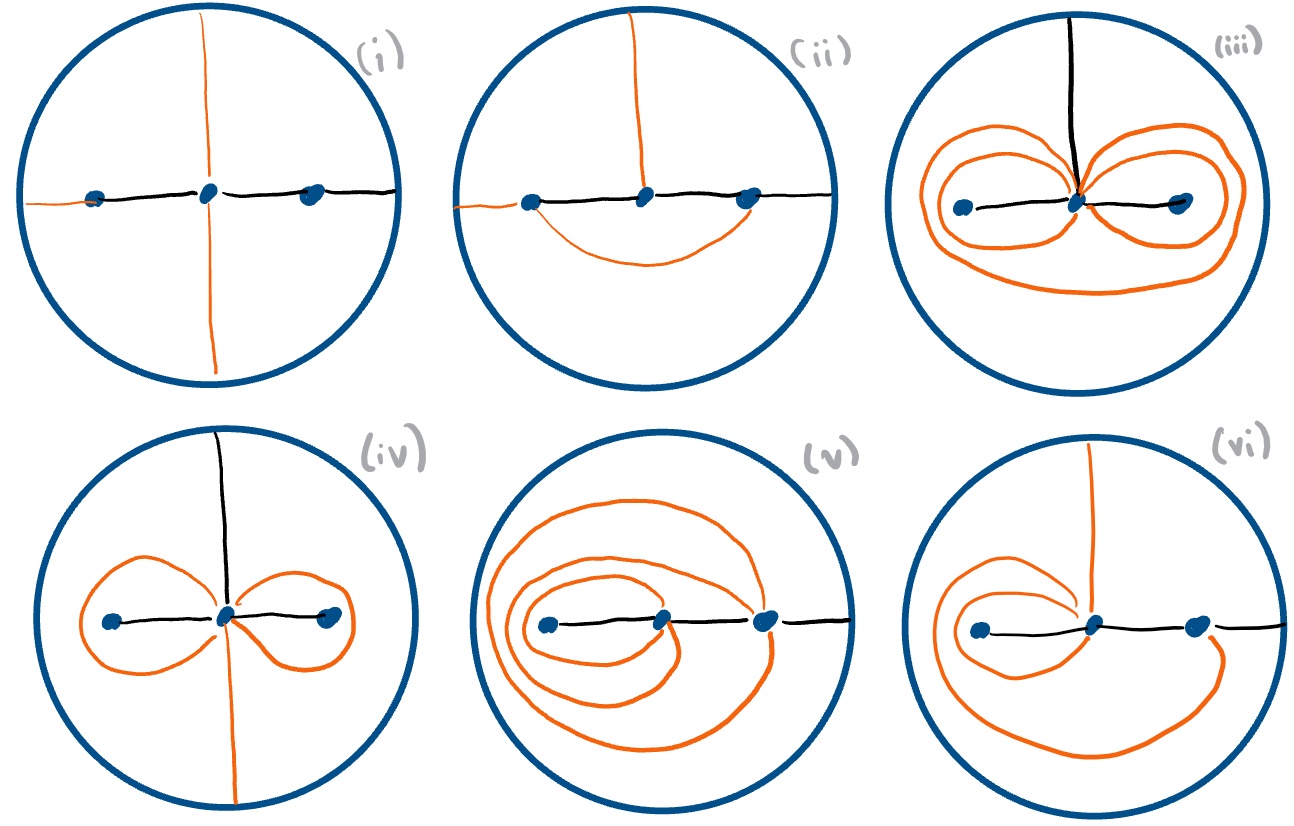}
    \caption{Saturated 0-systems labelled (i)-(vi)}
    \label{fig:max0}
\end{figure}
\end{lemma}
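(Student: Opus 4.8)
The plan is to classify saturated $0$-systems on $\Sigma$ by exploiting the explicit model of $\Sigma$ as the $3$-punctured disk $D$ with boundary $d$. By Remark \ref{max0sys}, since $\chi(\Sigma) = 2 - 4 = -2$, any saturated $0$-system has exactly $3|\chi| = 6$ arcs and forms an ideal triangulation of $\Sigma$. So the task reduces to enumerating, up to self-homeomorphism, all ideal triangulations of the $4$-punctured sphere whose vertices are the four punctures $a,b,c,d$.

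First I would record the combinatorial constraints forced by an ideal triangulation $T$ of $\Sigma$. Such a $T$ has $6$ edges, $4$ triangular faces (since $2|\chi| = 4$), and $4$ vertices. The key local invariant is the \emph{degree} $\deg(\mathbb{A}) = (\deg a, \deg b, \deg c, \deg d)$, the tuple recording how many arc-endpoints land at each puncture; since each of the $6$ edges contributes two endpoints, the degrees sum to $12$, and each puncture has degree at least $1$ (in fact at least $2$, since a puncture incident to a single edge-end would force a non-embedded or inessential configuration). I would use this degree tuple, which is a homeomorphism invariant up to reordering of $\mathcal{P}$, as the primary organizing device: enumerate the partitions of $12$ into four parts each $\geq 2$, namely $(3,3,3,3)$, $(2,3,3,4)$, $(2,2,4,4)$, $(2,2,3,5)$, $(2,2,2,6)$, and so on, and then determine which partitions are actually realized and how many distinct triangulations realize each.

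Next, for each candidate degree tuple I would build the triangulations explicitly in the disk model $D$. Working in $D$ lets me fix $d = \partial D$ and draw the remaining punctures $a,b,c$ as interior points, then enumerate the ways to triangulate by adding disjoint essential arcs until no more can be added (saturation), discarding configurations related by a self-homeomorphism of $\Sigma$. To certify that two triangulations are genuinely distinct, I would compare degree sequences and, when those agree, compare finer combinatorial data such as the cyclic pattern of triangles around each puncture or whether two arcs share both endpoints (forming a bigon-free ``parallel'' pair). To certify that my list is complete, I would argue that the degree bookkeeping above, together with the planarity forced by the disk model, leaves only finitely many cases, each of which I resolve by hand; Figure \ref{fig:max0} exhibits the six representatives (i)--(vi).

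The main obstacle I expect is the homeomorphism-reduction: two triangulations that look different as pictures in $D$ may be carried to one another by a mapping class of $\Sigma$ (for instance by a half-twist permuting two punctures, or by the hyperelliptic-type symmetries of the $4$-punctured sphere), so naive picture-counting will overcount. Controlling this requires understanding the relevant symmetry group acting on triangulations well enough to decide when two degree-$(d_a,d_b,d_c,d_d)$ triangulations lie in the same orbit. I would handle this by always normalizing via a homeomorphism so that the puncture of maximal degree (or a chosen distinguished puncture) sits at $d = \partial D$, thereby cutting down the redundancy, and then checking the few remaining ambiguities by hand. A secondary, more routine obstacle is verifying \emph{essentiality} and \emph{saturation} of each candidate: I must confirm that no drawn arc bounds a once-punctured monogon or is boundary-parallel, and that no further disjoint essential arc can be added, which I verify directly from the fact that a saturated $0$-system triangulates $\Sigma$ and hence has exactly $6$ arcs.
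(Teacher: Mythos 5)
There is a genuine gap, and it is fatal to your enumeration scheme: your claim that every puncture has degree at least $2$ is false. Loops are legitimate arcs in a $0$-system, and a loop based at a puncture $q$ that cuts off a once-punctured disk around a puncture $p$ forces $\deg p = 1$: the only essential arc inside that once-punctured monogon is the single arc from $q$ to $p$, and no other arc can reach $p$ without crossing the loop. This configuration is embedded and essential, contrary to your parenthetical justification. Concretely, take the star tree $T_b$ of Figure \ref{fig:embtree} (center $b$, leaves $a,c,d$) together with three disjoint loops based at $b$, each enclosing one leaf: this is a saturated $0$-system (it is one of the $\Delta$-type completions of $T_b$ in the paper's analysis, appearing in Figure \ref{fig:T2Delta}) with degree tuple $(9,1,1,1)$. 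In fact most of the six systems of Figure \ref{fig:max0} contain loops and hence degree-$1$ punctures; only the systems with degrees $(3,3,3,3)$ and $(2,2,4,4)$ survive your restriction to partitions of $12$ into parts $\geq 2$, so your case list would recover at most two of the six systems and would certify a wrong count. The correct lower bound is $\deg p \geq 1$, which holds because a puncture meeting no arc would let you add a disjoint essential arc from it, violating saturation.

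Beyond this error, your strategy diverges from the paper's in a way that loses the completeness mechanism. The paper does not enumerate by degree sequence: it shows the graph $\Gamma_{\mathbb{A}}$ is connected (Lemma \ref{Gamma_connected}, by saturation), extracts a spanning tree, observes there are exactly two embedded trees $T_a, T_b$ on four vertices up to homeomorphism, cuts along the tree to obtain an ideal hexagon, and then classifies the three triangulation types of a hexagon ($\Delta$, $\epsilon$, N of Figure \ref{fig:hexagons}) up to the symmetry group preserving the tree ($\mathbb{Z}/2$ for $T_a$, the symmetric group on the leaves for $T_b$). This yields a manifestly finite and exhaustive case analysis, with degree tuples used only at the end to distinguish the resulting systems. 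Your proposal instead leaves completeness to ``resolve by hand'' after the degree bookkeeping, which, even with the corrected bound $\deg p \geq 1$, would require you to separately prove that each realized degree tuple admits a unique system up to homeomorphism and that no tuple has been overlooked --- precisely the work the tree-and-hexagon decomposition is designed to do. If you repair the degree bound and then organize the by-hand analysis around a spanning tree of $\Gamma_{\mathbb{A}}$, you will in effect have reconstructed the paper's proof.
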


\begin{theorem}
There are nine maximal 1-systems on $\Sigma$, as shown in Figure \ref{allmax1}.

\begin{figure}[htp]
    \centering
    \includegraphics[width=12cm]{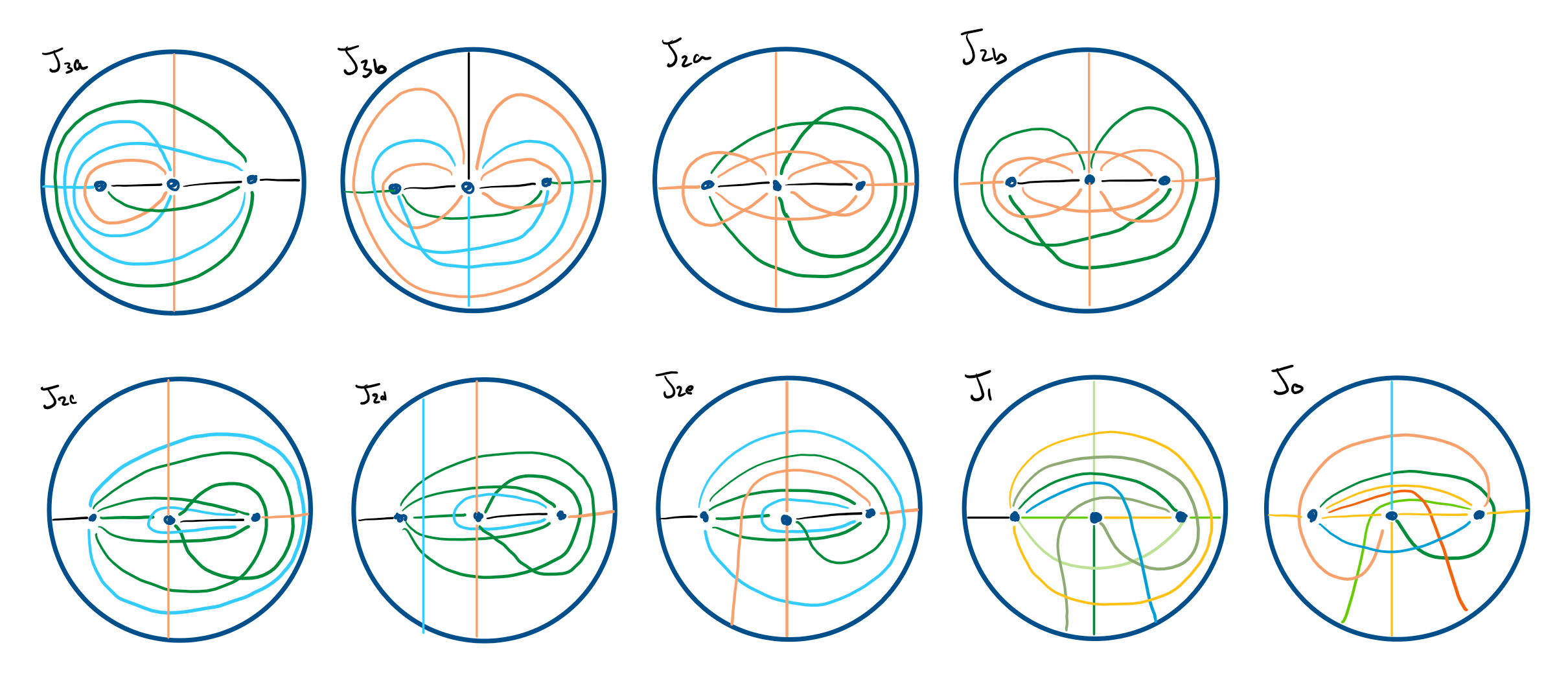}
    \caption{Maximal 1-systems labelled $J_{3a}-J_0$}
    \label{allmax1}
\end{figure}
\end{theorem}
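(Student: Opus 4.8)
The plan is to classify maximal 1-systems by the structure of their disjoint subset $\mathbb{J}$, which I expect is exactly what the labelling $J_{3a}, \dots, J_0$ records (the subscript being $|\mathbb{J}|$). By Theorem \ref{max1sys} and $|\chi(\Sigma)| = 2$, every maximal 1-system $\mathbb{A}$ has precisely $|\mathbb{A}| = 2\cdot 2\cdot 3 = 12$ arcs; conversely, since no 1-system can exceed this bound, any 12-arc 1-system is automatically saturated and maximal, so the whole problem becomes the enumeration of 12-arc 1-systems up to self-homeomorphism of $\Sigma$ and homotopy. Since the arcs of $\mathbb{J}$ are pairwise disjoint, $\mathbb{J}$ is itself a 0-system, and every arc of $\mathbb{A}\setminus\mathbb{J}$ is disjoint from each arc of $\mathbb{J}$ while meeting at least one other arc of $\mathbb{A}\setminus\mathbb{J}$.

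The central device is to cut. After putting all arcs in minimal position, I would cut $\overline{\Sigma}$ along the arcs of $\mathbb{J}$. Because these arcs are disjoint and essential, cutting yields a disjoint union of simpler ideal polygons (possibly carrying punctures), and every remaining arc of $\mathbb{A}\setminus\mathbb{J}$ lives in exactly one such piece. The task then reduces to: for each way of realizing $\mathbb{J}$ in $\Sigma$, classify the 1-systems in the complementary pieces that contain no arc disjoint from all others (such an arc would lie in $\mathbb{J}$), and record when the grand total reaches $12$. I would first tabulate, for each kind of complementary piece, the maximum number of pairwise-once-intersecting arcs with no disjoint member; for instance an unpunctured ideal quadrilateral carries only its two diagonals. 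This bookkeeping forces $|\mathbb{J}|$ to be small: once $|\mathbb{J}| \ge 4$ the pieces are too simple to supply enough mutually intersecting arcs to reach $12$, giving $|\mathbb{J}| \le 3$ and a finite case analysis.

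I would then run the enumeration downward from $|\mathbb{J}| = 3$ to $|\mathbb{J}| = 0$. For each size, any admissible $\mathbb{J}$ is, up to homeomorphism, a sub-0-system of one of the six saturated 0-systems classified above, and cutting along it produces pieces whose valid completions I enumerate directly. Throughout I would exploit the degree sequence $\deg(\mathbb{A})$ (total $24$, invariant up to the $S_4$-action by which $\mathrm{Homeo}(\Sigma)$ permutes $a,b,c,d$) together with the bigon criterion for arcs, both to certify that candidate collections really are 1-systems in minimal position and to prune symmetric duplicates. Each surviving configuration is checked to have exactly $12$ arcs, hence to be maximal.

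The main obstacle is completeness and de-duplication in the low-$|\mathbb{J}|$ cases, especially $|\mathbb{J}| = 1$ and $|\mathbb{J}| = 0$, where the complement is a single complicated piece, the decomposition argument loses its force, and many arcs cross one another. There the enumeration instead requires bounding, for each ordered pair of punctures, how many arcs between them can coexist in a 1-system, and applying self-homeomorphisms carefully to collapse equivalent configurations. To prove the final list has no repetitions and no omissions, I would separate the nine candidates by the invariants at hand — the degree sequence, and where that is insufficient the isomorphism type of the intersection graph whose vertices are the arcs and whose edges record the single crossings — and exhibit explicit homeomorphisms realizing each claimed equivalence. Verifying that exactly nine classes survive is the crux of the argument.
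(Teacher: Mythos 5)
Your proposal takes essentially the same route as the paper: fix $|\mathbb{A}|=12$ via Theorem \ref{max1sys}, stratify by the disjoint subset $\mathbb{J}$, establish $|\mathbb{J}|\le 3$ by cutting along $\mathbb{J}$ and counting arcs in the complementary pieces (this is the paper's Proposition \ref{nonsep} together with Lemma \ref{tech}, where the separating-cycle cases are ruled out exactly by the tabulation you describe), then enumerate downward from $|\mathbb{J}|=3$ to $|\mathbb{J}|=0$, pruning duplicates by degree sequences and explicit homeomorphisms. You correctly identify the crux as the low-$|\mathbb{J}|$ enumeration, which the paper carries out with the concrete machinery your sketch only gestures at --- the twisting numbers $\tau$ and $\tau^*$ on the annulus $\Omega$ and the twice-punctured disk, and the dual-pair counting lemmas for $|\mathbb{J}|=0$ --- but the overall architecture matches the paper's proof.
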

Note that for each maximal 1-system the black arcs are exactly $\mathbb{J}$. The naming convention is as follows: the number denotes the number of arcs in $|\mathbb{J}|$, and for a fixed $|\mathbb{J}|$, the letters (if they exist) index the distinct maximal 1-systems.

\textbf{Organization: }In Section \ref{0systems} (resp. Section \ref{1systems}), we  use Remark \ref{max0sys} (resp. Theorem \ref{max1sys}) to classify saturated 0-systems (resp. maximal 1-systems) on $\Sigma$. In Section \ref{0systems}, we first show any saturated 0-system must contain an embedded tree, and the two subsections deal with each case separately. In Section \ref{deets}, we discuss several technical lemmas we need through the rest of the paper, before culminating in showing $|\mathbb{J}|\leq 3$. This section can be used as reference material for Section \ref{1systems}. In Section \ref{1systems}, the subsections \ref{3cut}, \ref{2cut}, \ref{1cut}, \ref{0cut} describe the $|\mathbb{J}|=$ 3, 2, 1 and 0 cases respectively.

\section{Classifying Saturated 0-Systems}\label{0systems}
Let $\mathbb{A}$ be a saturated $0$-system on $\Sigma$. As $\chi(\Sigma)=-2$, we have $|\mathbb{A}|=6$ by Remark \ref{max0sys}. We begin by observing that $\mathbb{A}$ defines a embedded graph $\Gamma=\Gamma_\mathbb{A}\subset \overline{\Sigma}$ with vertex set $\mathcal{P}$ and edge set $\mathbb{A}$.

\begin{lemma}\label{Gamma_connected}
If $\mathbb{A}$ is saturated and $\Gamma$ is as defined above, then $\Gamma$ is connected.
\begin{proof}
Indeed, suppose $\Gamma$ is not connected. Take any arc $\sigma\subset \overline{\Sigma}-\Gamma$ between any two connected components which misses $\Gamma$ except at endpoints. Note that $\{\sigma\}\cup \mathbb{A}$ defines a $0$-system in $\Sigma$, giving the desired contradiction.
\end{proof}
\end{lemma}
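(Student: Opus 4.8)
The plan is to argue by contradiction, exploiting the hypothesis that $\mathbb{A}$ is saturated. Suppose $\Gamma$ were disconnected, and let $\Gamma_1, \Gamma_2$ be two distinct connected components. Since connected components are vertex-disjoint, they contain distinct punctures; pick $p_1 \in \mathcal{P}$ lying in $\Gamma_1$ and $p_2 \in \mathcal{P}$ lying in $\Gamma_2$, so that $p_1 \neq p_2$. Because $\overline{\Sigma}$ is a sphere, and in particular path-connected, there is a path from $p_1$ to $p_2$. After putting this path in general position with respect to $\Gamma$ and rerouting it through the complementary region $\overline{\Sigma} - \Gamma$ in which it travels, I would arrange for it to meet $\Gamma$ only at its two endpoints $p_1, p_2$. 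Restricting to the open interval then produces a simple arc $\sigma$ in $\Sigma$ whose interior is disjoint from $\Gamma$.

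The goal is then to show that $\{\sigma\} \cup \mathbb{A}$ is again a $0$-system, contradicting saturation. Three things need checking. First, $\sigma$ is disjoint from every arc of $\mathbb{A}$: each such arc is an edge of $\Gamma$, and by construction $\sigma$ meets $\Gamma$ only at the punctures $p_1, p_2$. Second, $\sigma$ is essential: it joins the two \emph{distinct} punctures $p_1$ and $p_2$, and an embedded arc between distinct punctures cannot be properly homotoped into a single puncture, so it cannot be homotoped into a puncture at all. Third, $\sigma$ is not homotopic to any arc of $\mathbb{A}$: since homotopy preserves the pair of punctures at an arc's endpoints, and no edge of $\Gamma$ joins $\Gamma_1$ to $\Gamma_2$ (otherwise $p_1$ and $p_2$ would lie in the same component), no arc of $\mathbb{A}$ connects $p_1$ to $p_2$, whereas $\sigma$ does. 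Together these say $\{\sigma\} \cup \mathbb{A}$ is a strictly larger $0$-system, which is the desired contradiction.

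The one step requiring genuine care, and which I expect to be the main obstacle, is the construction of $\sigma$: namely making precise that a path between components can be taken embedded and meeting $\Gamma$ only at its endpoints, and confirming essentiality cleanly from the definition given in the introduction. Everything else is a formal consequence of the definitions of $0$-system, homotopy of arcs, and saturation, so once the arc is produced the contradiction is immediate.
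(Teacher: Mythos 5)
Your proposal is correct and takes essentially the same route as the paper: the paper's proof also assumes $\Gamma$ disconnected, takes an arc $\sigma\subset\overline{\Sigma}-\Gamma$ between two components meeting $\Gamma$ only at its endpoints, and concludes $\{\sigma\}\cup\mathbb{A}$ is a larger $0$-system, contradicting saturation. The checks you spell out (simplicity of $\sigma$, essentiality via distinct endpoints, and non-homotopy to any arc of $\mathbb{A}$ since no edge joins the two components) are exactly the details the paper leaves implicit.
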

By Lemma \ref{Gamma_connected}, we can find a tree $T\subset \Gamma$ with four vertices. As there are two isomorphism types of maximal trees on four vertices, the two trees $T_a,T_b$ in Figure \ref{fig:embtree} illustrate the two different ways of embedding a tree into $\overline{\Sigma}$, and this embedding is unique up to a homeomorphism of $\overline{\Sigma}$.
\begin{figure}[htp]
    \centering
    \includegraphics[width=6cm]{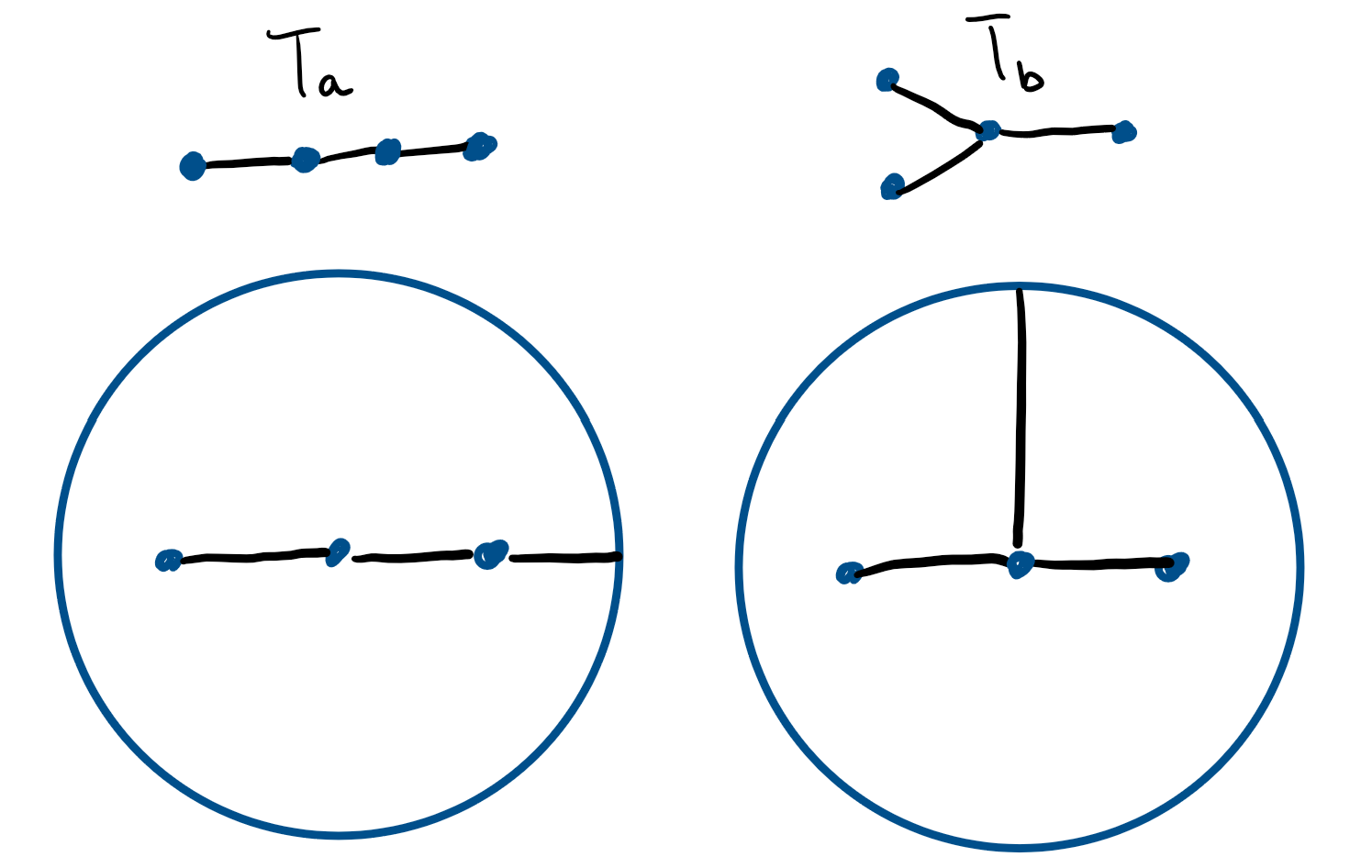}
    \caption{Embedded maximal trees $T_a,T_b$}
    \label{fig:embtree}
\end{figure}

We complete $T$ to a saturated 0-system if we can find three disjoint, nonhomotopic arcs in $\overline{\Sigma}-T$. Begin by observing that cutting along $T$ gives an ideal hexagon in either case. To complete $T$ to a saturated 0-system, we have three configurations of arcs as seen in Figure \ref{fig:hexagons}. The remaining work is to put the labels corresponding to $T_a,T_b$ into the hexagon. 

\begin{figure}[htp]
    \centering
    \includegraphics[width=3cm]{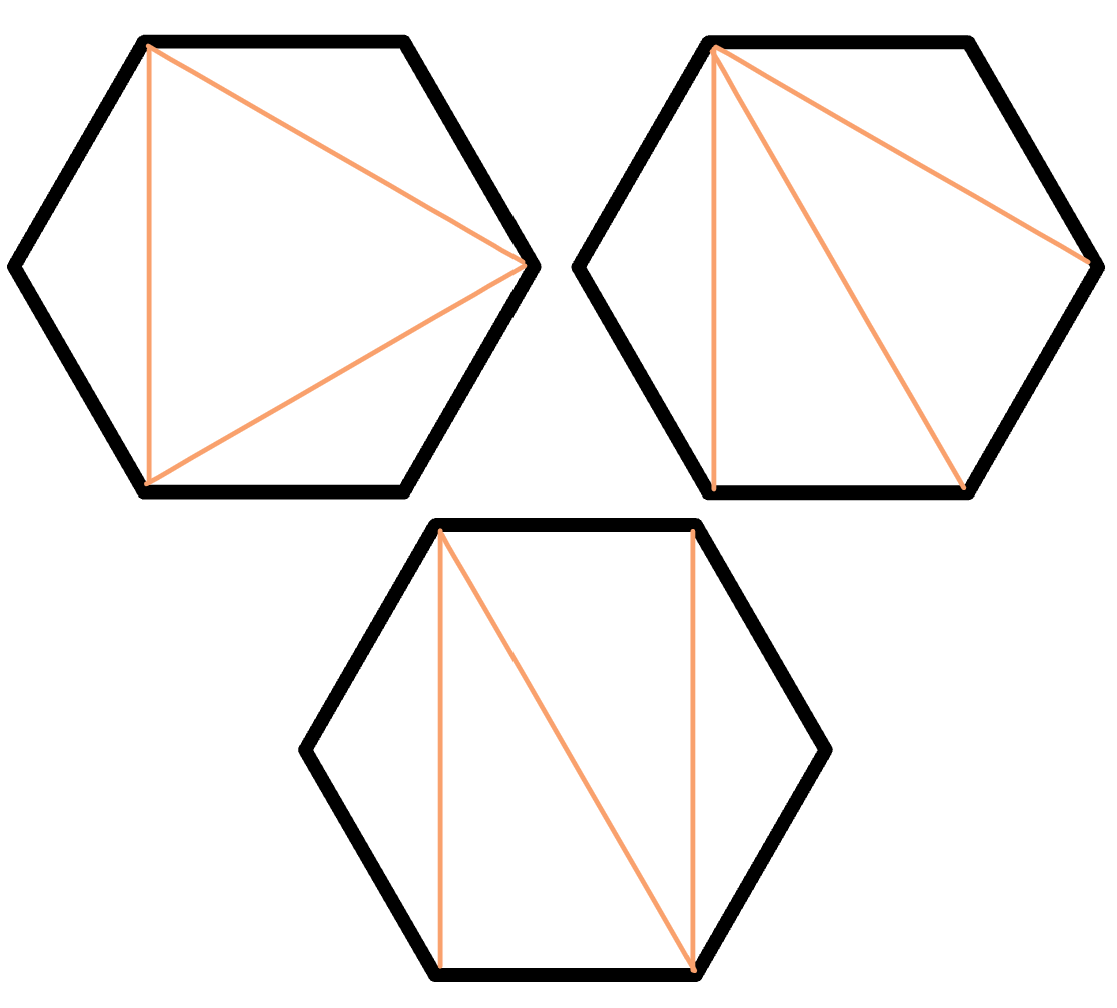}
    \caption{$\Delta,\epsilon, $ N configurations}
    \label{fig:hexagons}
\end{figure}
\subsection{Tree \texorpdfstring{$T_a$}{Lg}}
Note there is an action of $\Z/2$ on this graph, which interchanges $a$ with $d$ and $b$ with $c$. We can extend elements of $\Z/2$ to homeomorphisms of $\overline{\Sigma}$ which preserve~$T_a$. We classify the configurations of Figure~$\ref{fig:hexagons}$ up to these homeomorphisms.

\begin{enumerate}
    \item [$\Delta$:] Up to an action of $\Z/2$, there is one $\Delta$ configuration, illustrated in Figure \ref{fig:T1Delta}.
    
    \begin{figure}[htp]
    \centering
    \includegraphics[width=2.5cm]{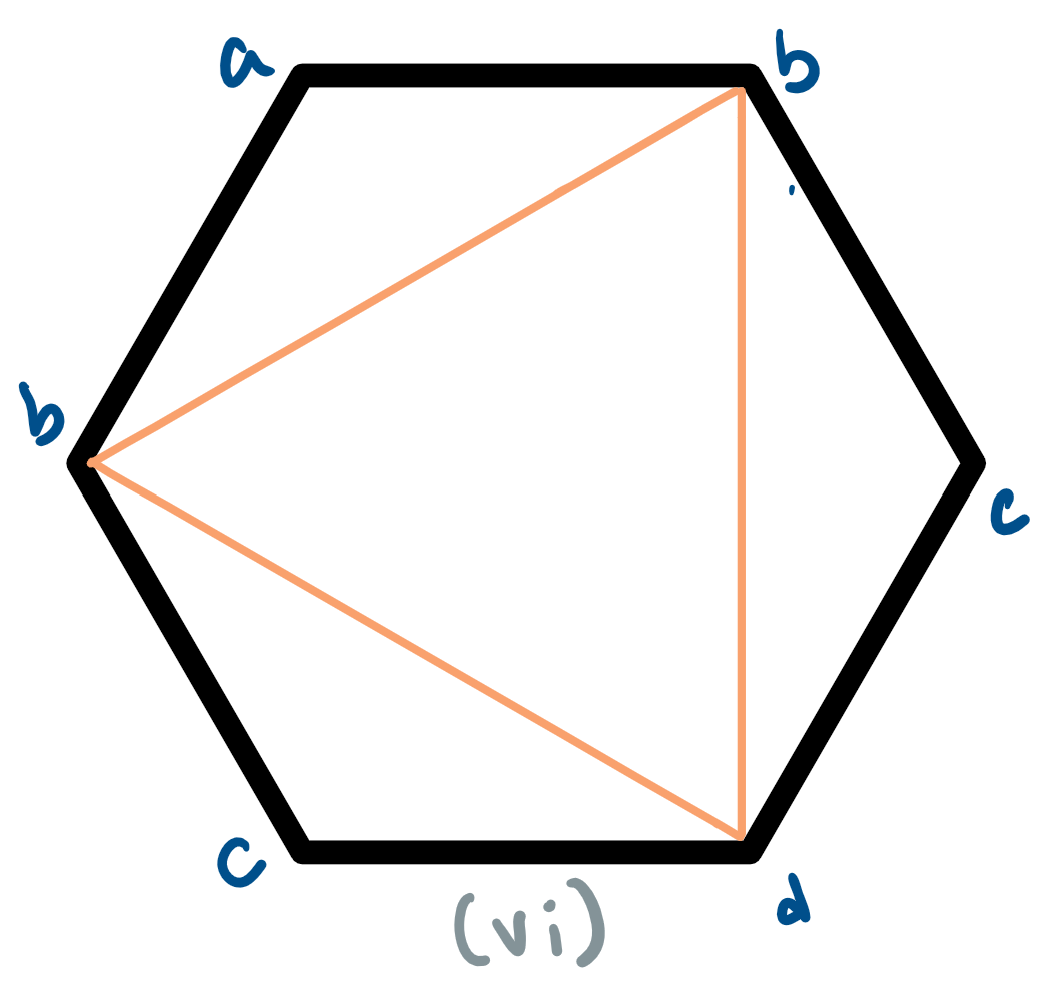}
    \caption{Homeomorphic to (vi) in Figure \ref{fig:max0}}
    \label{fig:T1Delta}
    \end{figure}
    
    \item [$\epsilon$:] Up to an action of $\Z/2$, there  are two $\epsilon$ configurations, illustrated in Figure \ref{fig:T1Epsilon}.
    
    \begin{figure}[htp]
    \centering
    \includegraphics[width=4.5cm]{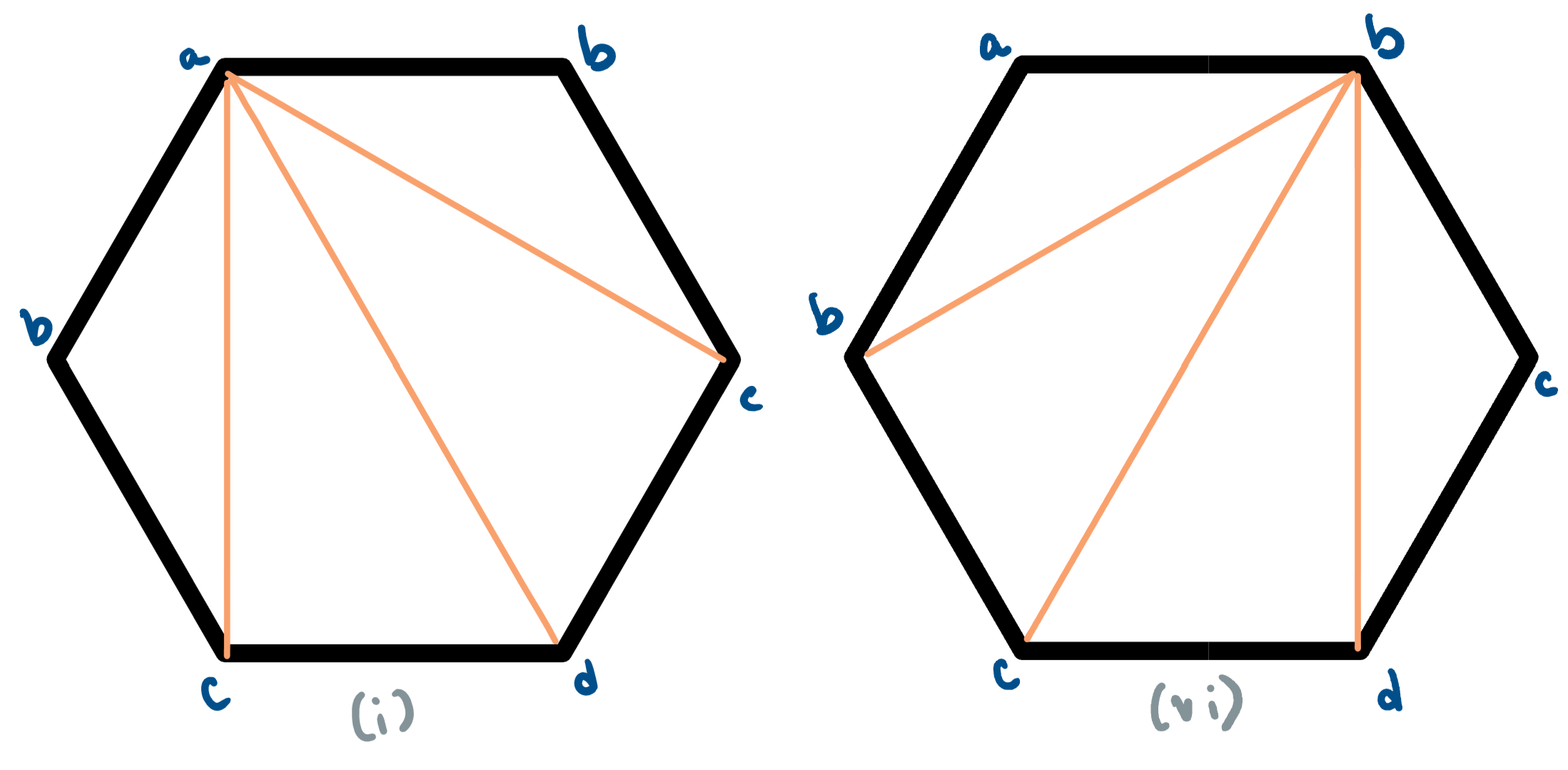}
    \caption{Homeomorphic to (i), (vi) in Figure \ref{fig:max0}}
    \label{fig:T1Epsilon}
    \end{figure}
    
    \item [N:] Up to an action of $\Z/2$, there are three $N$ configurations, illustrated in Figure \ref{fig:T1N}.
    
    \begin{figure}
    \centering
    \includegraphics[width=4cm]{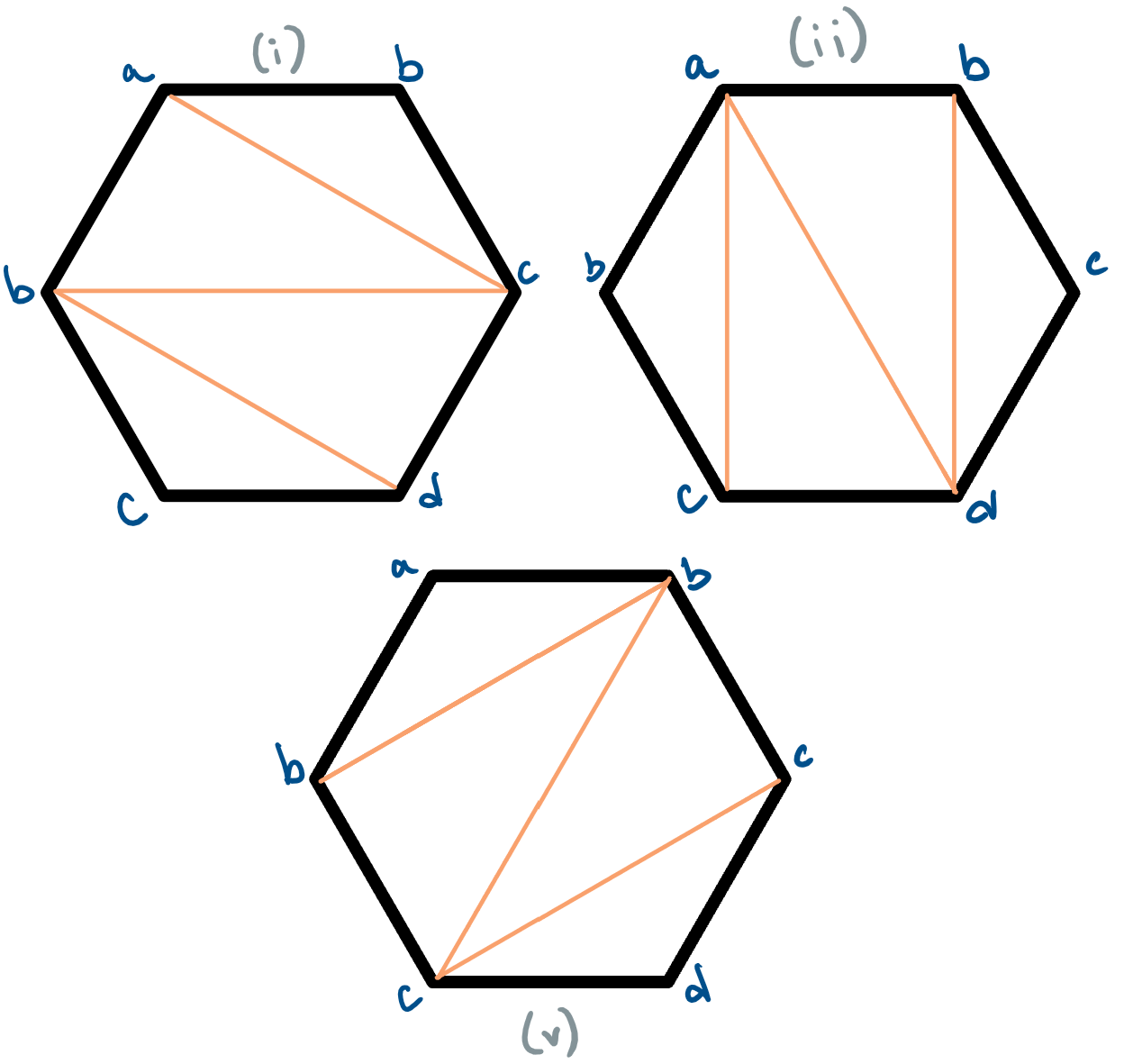}
    \caption{Homeomorphic to (i), (ii), (v) in Figure \ref{fig:max0}}
    \label{fig:T1N}
    \end{figure}
    
\end{enumerate}
\subsection{Tree \texorpdfstring{$T_b$}{Lg}}
Note that for $V=\{a,c,d\}$, we have an action of the symmetry group $S_V$ on this graph. We can extend elements of $S_V$ to homeomorphisms of $\overline{\Sigma}$ which preserve $T_b$. We classify the configurations of Figure $\ref{fig:hexagons}$ up to these homeomorphisms.

\begin{enumerate}
    \item [$\Delta$:] Up to an action of $S_V$, there are two $\Delta$ configurations, illustrated in Figure \ref{fig:T2Delta}.
    
    \begin{figure}[htp]
    \centering
    \includegraphics[width=4.5cm]{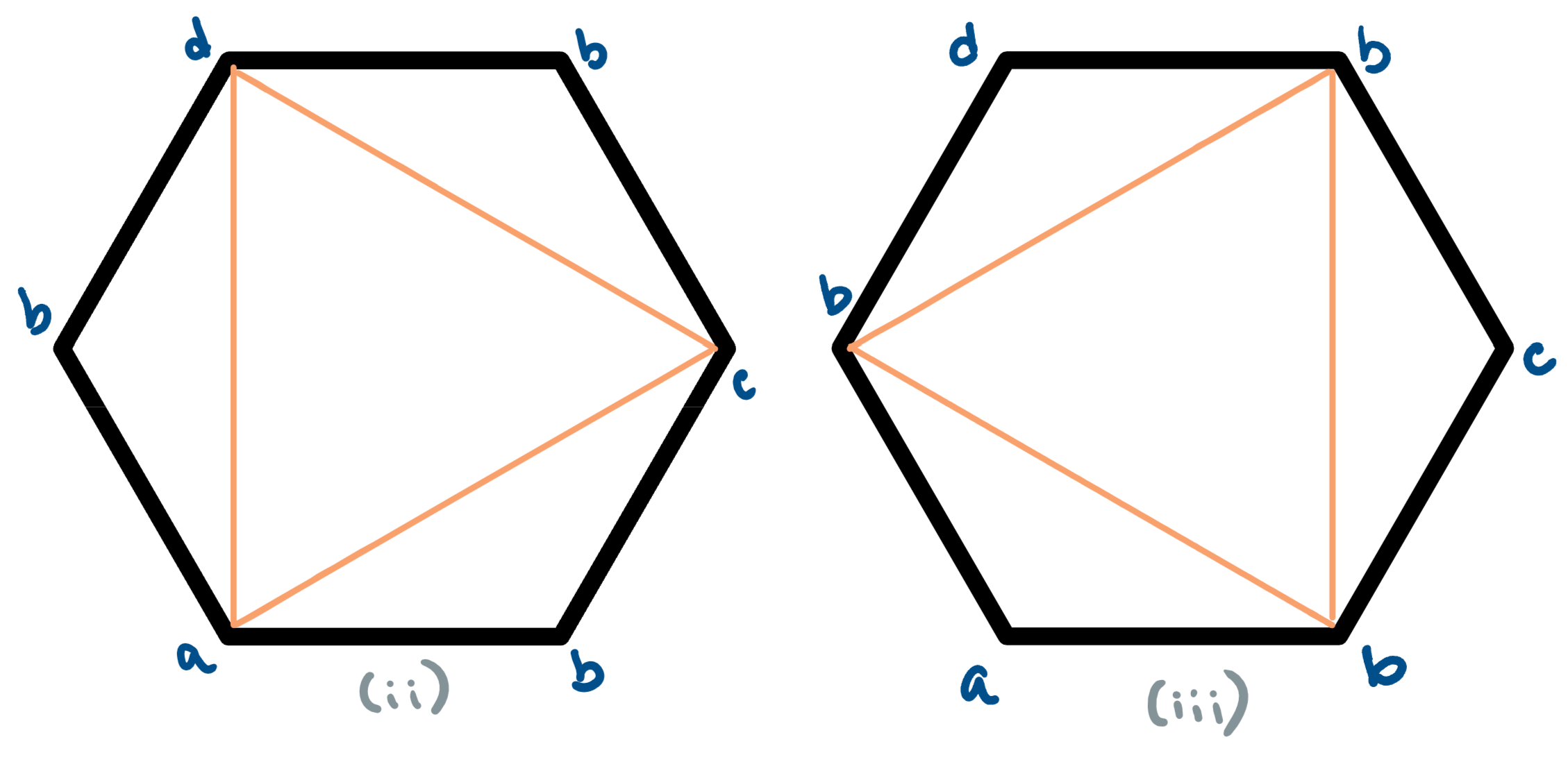}
    \caption{Homeomorphic to (ii), (iii) in Figure \ref{fig:max0}}
    \label{fig:T2Delta}
    \end{figure}
    
    \item [$\epsilon$:] Up to an action of $S_V$, there are two $\epsilon$ configurations, illustrated in Figure \ref{fig:T2Epsilon}.
    
    \begin{figure}[htp]
    \centering
    \includegraphics[width=4.5cm]{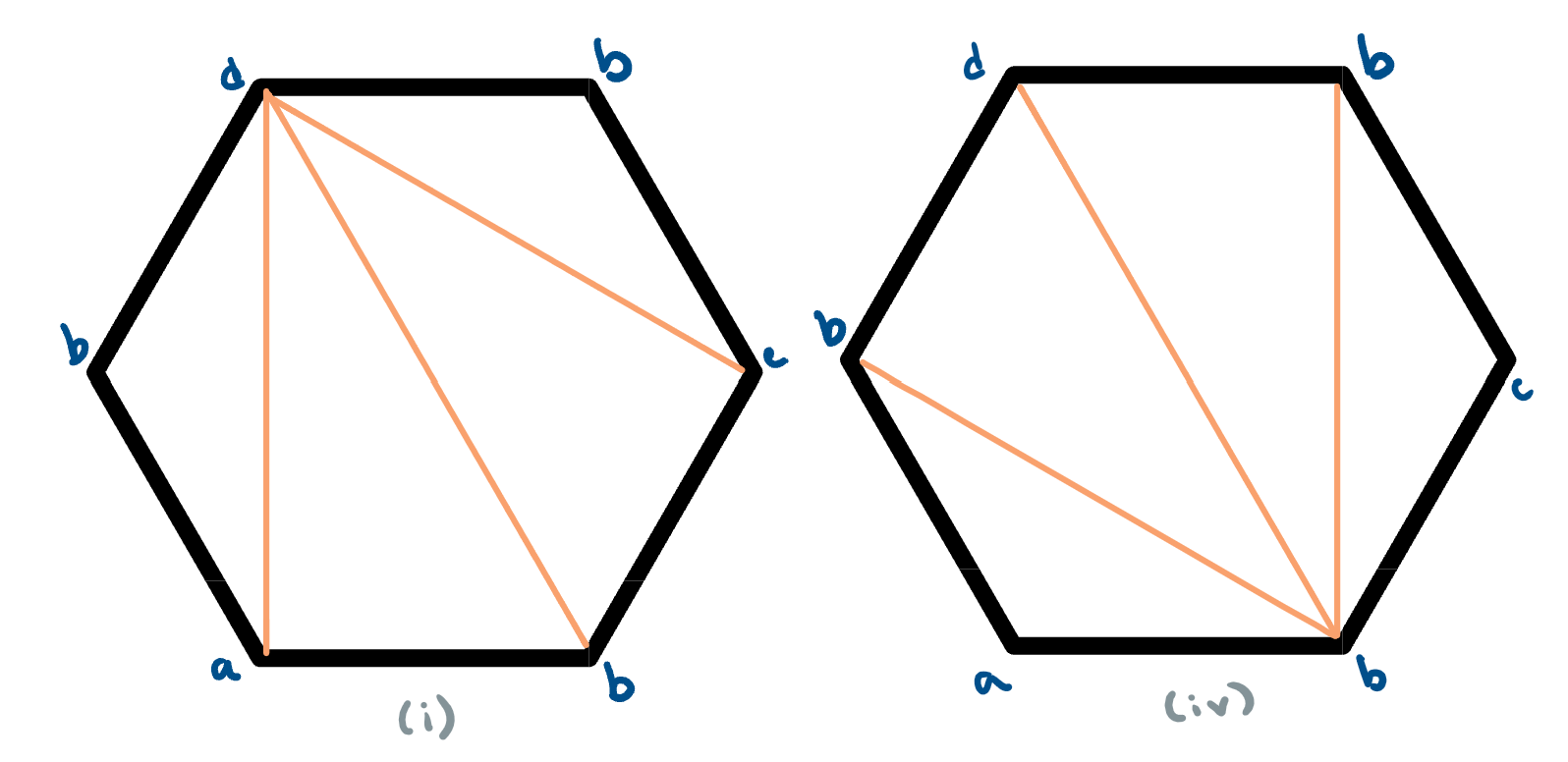}
    \caption{Homeomorphic to (iv), (i) in Figure \ref{fig:max0}}
    \label{fig:T2Epsilon}
    \end{figure}
    
    \item [N:] Up to an action of $S_V$, there is one $N$ configuration, illustrated in Figure \ref{fig:T2N}.
    
    \begin{figure}[htp]
    \centering
    \includegraphics[width=2.5cm]{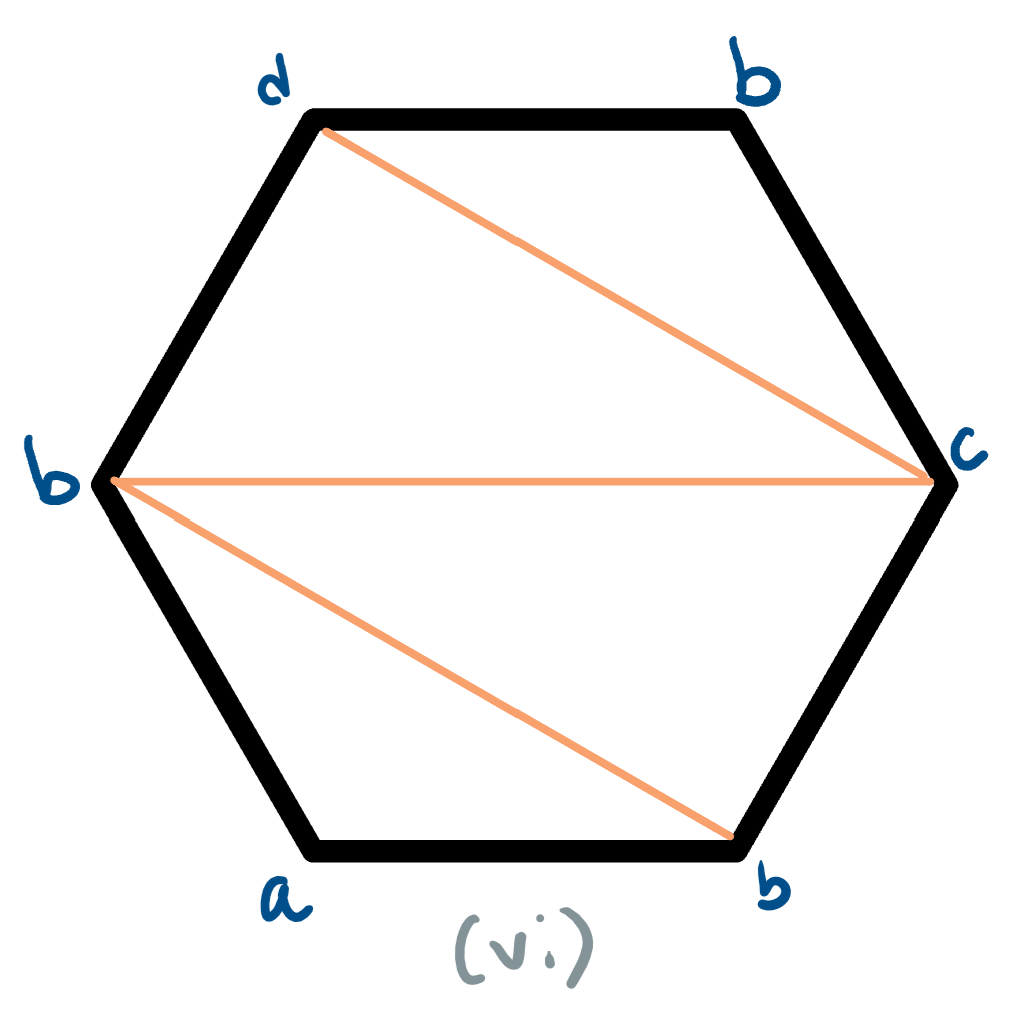}
    \caption{Homeomorphic to (vi) in Figure \ref{fig:max0}}
    \label{fig:T2N}
    \end{figure}
    
\end{enumerate}

We can distinguish systems (i)-(vi) by consideration of the set of the vertex degrees of $\Gamma$, which is easily seen to be an invariant of $\mathbb{A}$. Thus, all saturated 0-systems on $\Sigma$ are given by Figure \ref{fig:max0} up to homeomorphisms and equivalence.

\section{Properties of 1-systems}\label{deets}
Let $\mathbb{A}$ be a maximal $1$-system on $\Sigma$. As $\chi(\Sigma)=-2$, we have $|\mathbb{A}|=12$ by Theorem \ref{max1sys}. 

\begin{lemma}\label{loopisajail}
Consider a loop $\ell$ based at puncture $p$ which encloses a once-punctured disk $D_{pq}$ with puncture $q$. Let $\sigma_{pq}\subset D_{pq}$ be an arc between $p$ and $q$. For any maximal 1-system $\mathbb{A}$ such that $\ell\in \mathbb{A}$, then necessarily $\sigma_{pq}\in \mathbb{J}$.  
\begin{proof}
For any $\alpha\in \mathbb{A}$, $\alpha$ must satisfy exactly one of the two conditions: 
\begin{enumerate}
    \item $\alpha$ can be homotoped to an arc inside $\Sigma-D_{pq}$.
    \item $\alpha$ has an endpoint at $q$ and connected intersection with $D_{pq}$. 
\end{enumerate}
These two conditions follow from the bigon criterion for arcs. The lemma follows as each arc in condition 2 is homotopic to or disjoint from $\sigma_{pq}$.
\end{proof}
\end{lemma}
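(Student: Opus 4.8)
The plan is to use $\ell$ as a separating curve and to analyze how every other arc of $\mathbb{A}$ meets the once-punctured disk $D_{pq}$ that it bounds. First I would place all arcs of $\mathbb{A}$ in pairwise minimal position, so that each $\alpha \in \mathbb{A}-\{\ell\}$ is transverse to $\ell$ with no bigon or half-bigon between $\alpha$ and $\ell$; since $\mathbb{A}$ is a $1$-system and $\ell \in \mathbb{A}$, this forces $|\alpha \cap \ell| \le 1$. The loop $\ell$ cuts $\Sigma$ into $D_{pq}$ and its complement, so I would split into the cases $|\alpha\cap\ell|=0$ and $|\alpha\cap\ell|=1$. In the first case $\alpha$ lies in a single complementary region: either in $\Sigma-D_{pq}$, or in $D_{pq}$. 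In the latter subcase, since $D_{pq}$ is a once-punctured disk whose only interior puncture is $q$ and whose only boundary puncture is $p$, essentiality rules out loops (a simple loop at $p$ would be homotopic to $\ell$, and a simple loop at $q$ would bound an unpunctured disk), so $\alpha$ must run between $p$ and $q$. In the second case $\alpha$ crosses $\ell$ once, hence has exactly one endpoint inside $D_{pq}$, which must be the interior puncture $q$, and $\alpha \cap D_{pq}$ is a single sub-arc from $\ell$ to $q$. The bigon criterion for arcs is what makes this dichotomy clean.

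The heart of the argument is then the geometry of the once-punctured disk. I would first record that, up to homotopy rel endpoints, there is a unique essential simple arc from $p$ to $q$ in $D_{pq}$: filling in $q$ yields a genuine disk in which all arcs from $p$ to $q$ are homotopic, and this two-dimensional homotopy can be perturbed off the single point $q$, so winding around $q$ contributes no new classes. This pins down the class of $\sigma_{pq}$ unambiguously and identifies every arc of the first case lying in $D_{pq}$ with $\sigma_{pq}$. For the second case, I would observe that $\sigma_{pq}$ and the sub-arc $\alpha \cap D_{pq}$ are two arcs in the disk $D_{pq}$ each terminating at the puncture $q$; like two radii they can be realized disjointly away from their common endpoint $q$, which, being a puncture, contributes no interior intersection. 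Hence $|\sigma_{pq} \cap \alpha| = 0$. The first-case arcs lying in $\Sigma-D_{pq}$ are automatically disjoint from $\sigma_{pq}\subset D_{pq}$, so altogether $|\sigma_{pq}\cap\alpha|=0$ for every $\alpha \in \mathbb{A}$.

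With disjointness established, $\{\sigma_{pq}\}\cup\mathbb{A}$ is again a $1$-system, in fact one in which $\sigma_{pq}$ meets nothing. Since $\mathbb{A}$ is maximal and therefore saturated, no essential arc can be adjoined, so $\sigma_{pq}$ must already be represented in $\mathbb{A}$; being disjoint from every other arc, it lies in the disjoint subset $\mathbb{J}$, as claimed. I expect the main obstacle to be the careful justification of the dichotomy together with the two once-punctured-disk facts: ruling out winding or stray essential arcs trapped in $D_{pq}$, and upgrading "at most one intersection'' to genuine disjointness of the crossing arcs from $\sigma_{pq}$. These are precisely the points where the bigon criterion and the simple connectivity of the filled disk do the real work.
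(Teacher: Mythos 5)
Your proposal is correct and takes essentially the same route as the paper: you establish the same dichotomy via the bigon criterion (every $\alpha\in\mathbb{A}$ either homotopes into $\Sigma-D_{pq}$ or crosses $\ell$ at most once and meets $D_{pq}$ in a single sub-arc ending at $q$), deduce that every such arc is homotopic to or disjoint from $\sigma_{pq}$, and invoke saturation to force $\sigma_{pq}\in\mathbb{A}$ and hence $\sigma_{pq}\in\mathbb{J}$. You simply spell out details the paper leaves implicit, such as the uniqueness of the $p$--$q$ arc class in the once-punctured disk and the explicit maximality step.
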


\begin{lemma}\label{tech}
Let $\mathcal{D}_n$ be a once-punctured $n$-gon for $n=1,2,3,4$ with puncture $p_{-\infty}$, and $\mathbb{A}=\mathbb{A}_n$ a maximal 1-system on $\mathcal{D}_n$. Then $\mathcal{D}_n$ contains at most $1, 3, 6, 10$ arcs respectively.
\begin{proof}
The 1,2-gon cases are obvious. For $\mathcal{D}_3$, note that there are 3 loops and 6 arcs which are not loops - 3 joining vertices and 3 joining a vertex with a puncture. 

\begin{figure}[htp]
    \centering
    \includegraphics[width=3cm]{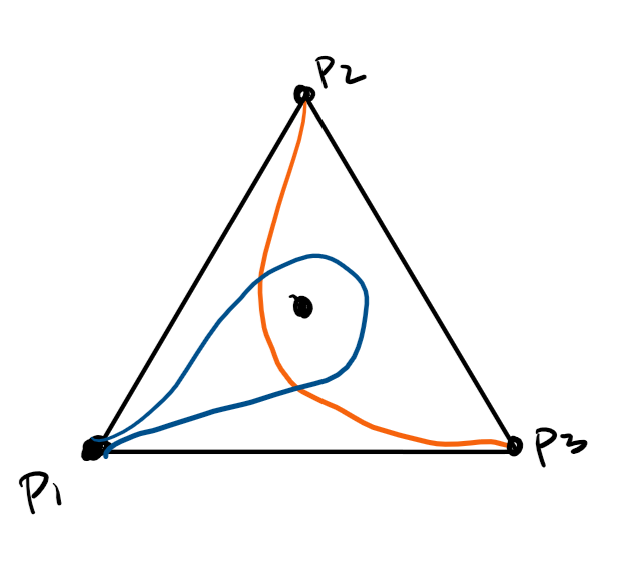}
    \caption{Correspondence between arcs joining distinct vertices and loops}
    \label{punctri}
\end{figure}

However, not all configurations of arcs are possible. In Figure \ref{punctri}, the blue loop excludes the following orange arc, and this type of restriction is the only possible one. Consequently, there are at most 6 arcs in $\mathbb{A}$. In fact, see Figure \ref{punctrimax1} for an example.

\begin{figure}[htp]
    \centering
    \includegraphics[width=3cm]{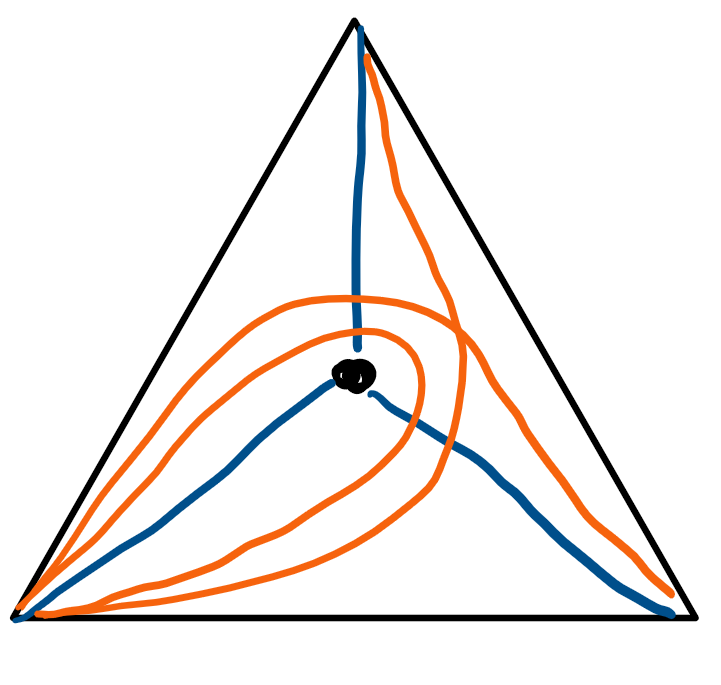}
    \caption{A maximal 1-system on $\mathcal{D}_3$}
    \label{punctrimax1}
\end{figure}

For $\mathcal{D}_4$, we count 4 loops, and 12 arcs which are not loops. This comes from
4 arcs joining a vertex to the puncture, 4 arcs joining adjacent vertices which we call \emph{adjacent arcs}, and 4 arcs which join opposite vertices. The restriction in $\mathcal{D}_3$ generalizes to the following statement: Suppose first there is a loop $\ell \in \mathbb{A}$. Apart from other loops, we have exactly three arcs, illustrated in Figure \ref{puncsq}, which intersect l more than once and hence are excluded from $\mathbb{A}$. This shows $|\mathbb{A}|\leq 10$. Observe also, we can have at most two adjacent arcs in any 1-system, and these two adjacent arcs must share a common endpoint. 

\begin{figure}[htp]
    \centering
    \includegraphics[width=3cm]{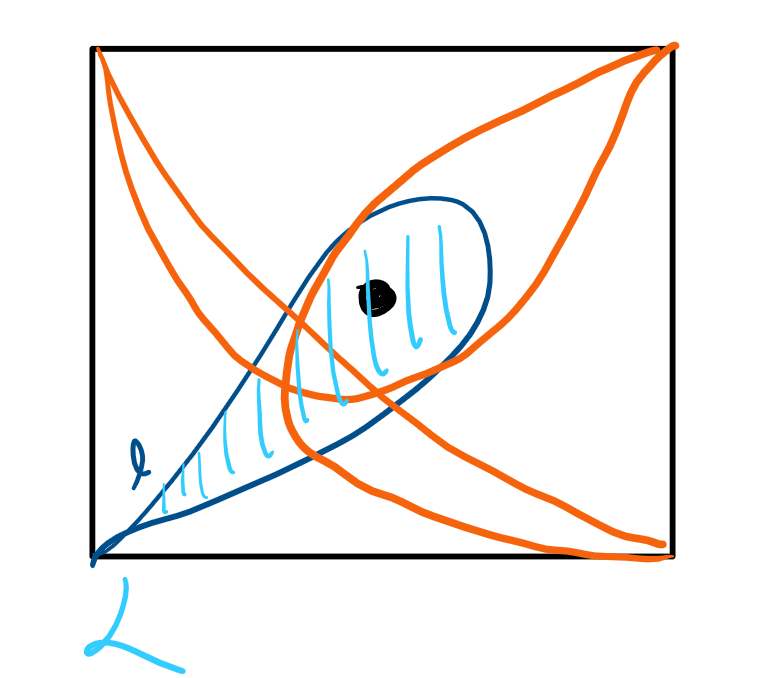}
    \caption{Arcs excluded by a loop}
    \label{puncsq}
\end{figure}
If we have no loops, it is clear the constraints discussed above force any 1-system to contain at most ten arcs. 
\end{proof}
\end{lemma}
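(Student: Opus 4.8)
The plan is to handle each $n$ by first listing the finitely many homotopy classes of essential arcs in $\mathcal{D}_n$ and then bounding the largest subcollection in which each pair meets at most once, computing geometric intersection numbers throughout with the bigon criterion for arcs. The fact I would isolate first is that \emph{a $1$-system on $\mathcal{D}_n$ contains at most one loop}. Indeed, a loop $\ell$ based at a vertex $v$ and enclosing a once-punctured disk $R \ni p_{-\infty}$ has $\overline{R} \cap \partial\mathcal{D}_n = \{v\}$; if $\ell'$ is a second such loop based at $v' \neq v$, then its basepoint lies outside $\overline{R}$ while $p_{-\infty} \in R$, so by connectedness $\ell'$ must enter and leave $R$ and hence cross $\partial R = \ell$ at least twice. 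This disposes of all multi-loop configurations at once. For $n = 1, 2$ the surface carries so few essential classes that the bounds $1$ and $3$ follow by direct inspection, so the substance lies in $n = 3, 4$.

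For $n = 3$ I would enumerate the nine classes — three loops, three arcs between distinct vertices, three arcs from a vertex to $p_{-\infty}$ — and record all pairwise intersection numbers. Beyond the loop--loop conflict above, the only essential conflict is that the loop at a vertex $v$ meets the vertex--vertex arc $\alpha$ joining the two other vertices at least twice: $\alpha$ separates $p_{-\infty}$ from $v$, so the loop, which runs from $v$ around $p_{-\infty}$ and back, is forced across $\alpha$ twice, while every other pair can be realized with at most one crossing. A two-way case split then finishes it: with no loop only the six non-loop arcs are available, and with a loop present that loop excludes its opposite vertex--vertex arc, leaving at most $1 + 5 = 6$. Hence $|\mathbb{A}| \leq 6$.

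For $n = 4$ I would list the sixteen classes — four loops, four vertex-to-$p_{-\infty}$ arcs, four adjacent arcs, and four arcs between opposite vertices — and again split on whether a loop occurs. If a loop $\ell$ is present it is the only one, and a direct check (as in Figure~\ref{puncsq}) shows that exactly three non-loop arcs cross $\ell$ twice; removing the other three loops together with these three arcs leaves at most $16 - 3 - 3 = 10$. If no loop occurs, the governing constraint is that at most two of the four adjacent arcs can appear: opposite adjacent arcs each cup around $p_{-\infty}$ from opposite sides and therefore meet twice, so one keeps at most one from each opposite pair, and the two survivors automatically share a vertex. Discarding at least two of the twelve non-loop arcs gives $|\mathbb{A}| \leq 10$.

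The main obstacle is the intersection bookkeeping in the $n = 4$ case: verifying that a loop excludes \emph{exactly} three arcs, and that opposite adjacent arcs are the only obstruction within the adjacent family. I would organize this by fixing the loop (respectively the adjacent arc) and applying the bigon criterion to each remaining class in turn, using the dihedral symmetry of the square to collapse the case count. Once these two local facts are in place, both global bounds drop out of the elementary counting above.
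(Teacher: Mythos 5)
Your proposal is correct and follows essentially the same route as the paper: enumerate the finitely many homotopy classes on $\mathcal{D}_n$, identify the pairs forced to intersect at least twice (a loop versus the arcs separating its basepoint from $p_{-\infty}$, and opposite adjacent arcs in the square), and split on whether a loop is present. You are in fact slightly more explicit than the paper, isolating the at-most-one-loop fact via the separation argument where the paper leaves loop--loop conflicts implicit in its figures.
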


Finally, consider $\Omega=S^1\times [0,1]$, where each boundary component is given a cell decomposition of two vertices and two edges. In the following discussion, we only consider arcs which are not loops. Label $S^1\times \{0\}$ vertices $a,d$ and $S^1\times \{1\}$ vertices $b,c$. Let $\alpha_0=\{a\}\times [0,1]$ be the ``vertical arc" between $a,b$. For $n\in \Z$, denote $\alpha_n$ as image of $\alpha_0$ under $n$ Dehn twists about $S^1\times \{\frac{1}{2}\}$. Observe
$$|\alpha_n\cap \alpha_m|=|n-m|-1$$ Similarly, define $\delta_0=\{d\}\times [0,1]$, and $\delta_n$ the image of $\delta_0$ under $n$ Dehn twists about $S^1\times \{\frac{1}{2}\}$, and a similar observation applies. 

We generalize this observation. Let $X$ denote the set of homotopy classes of arcs between the two boundary components which start and end at a vertex. Thus, $X=X_a^b\sqcup X_a^c\sqcup X_d^b \sqcup X_d^c$, where $X_a^b$ denotes the set of homotopy classes of arcs between $a$ and $b$, and so on. Define a map
$$\tau:X_a^b\sqcup X_d^c\to \Z$$
$$\tau:\alpha_n, \delta_n \mapsto n$$
For $\sigma\in X_a^c\sqcup X_d^b$, choose a representative which lies in the disk bounded by $\alpha_n\cup \alpha_{n+1}$ and  $\delta_n\cup \delta_{n+1}$ for a unique $n\in \Z$. 

\begin{figure}[htp]
    \centering
    \includegraphics[width=7cm]{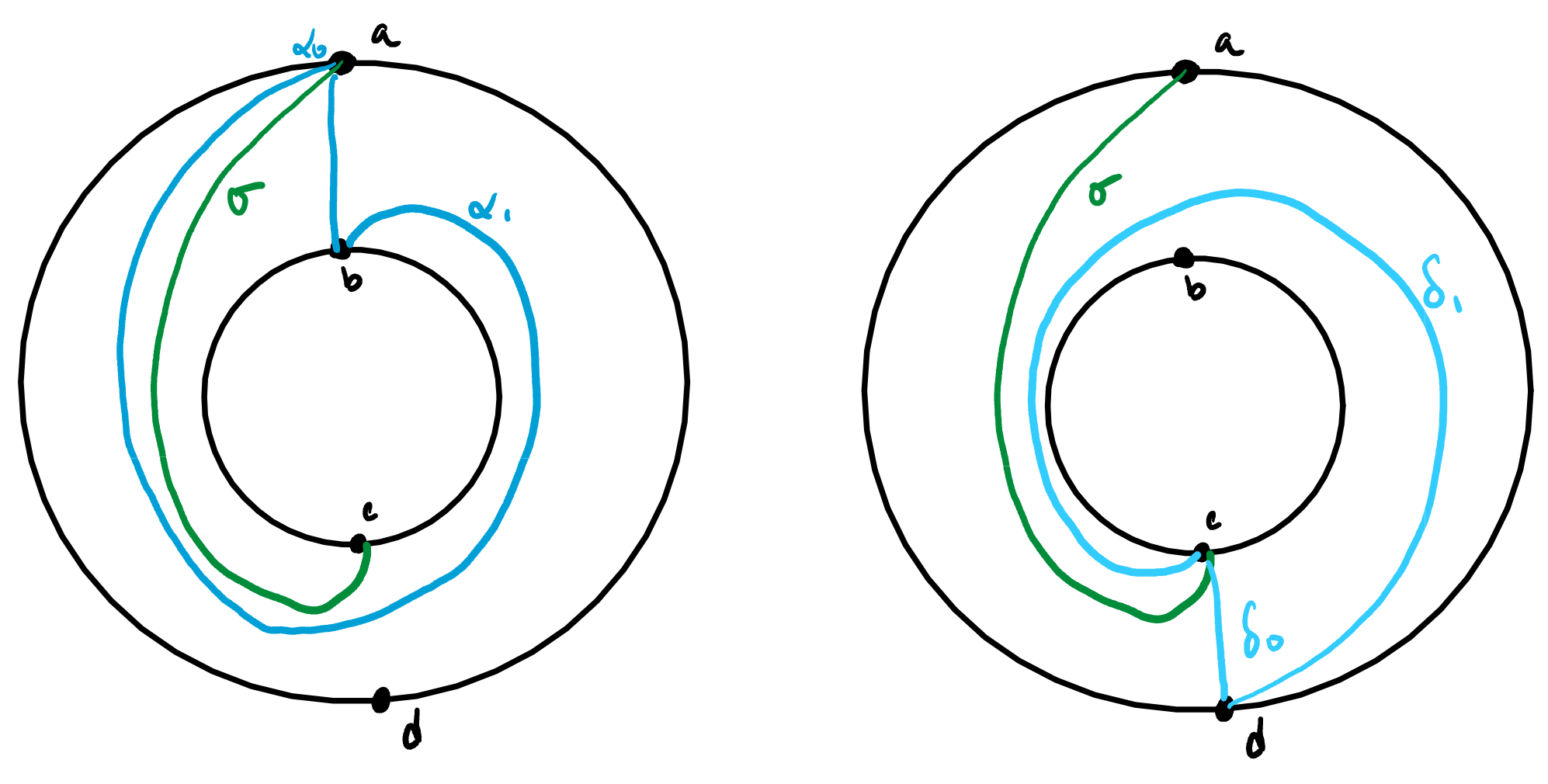}
    \caption{$\sigma$ in the region bounded by  $\alpha_0,\alpha_1$ (left) and $\delta_0$, $\delta_1$ (right)}
    \label{twex}
\end{figure}

Then the formula
$$\tau(\sigma)=n+\frac{1}{2}$$
defines a map $\tau:X\to \frac{1}{2}\Z$ which we call the \textit{twisting number}. By induction, it is easy to prove that $\tau$ computes the number of intersections of two arcs in minimal position: 

\begin{lemma}\label{mainth} Let $\sigma_1,\sigma_2\in X$,  and let $\eta$ denote the number of common endpoints between $\sigma_1$ and $\sigma_2$. Then,
$$|\sigma_1\cap \sigma_2|=|\tau(\sigma_1)-\tau(\sigma_2)|-\frac{\eta}{2}$$
\end{lemma}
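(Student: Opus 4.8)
The plan is to pass to the universal cover of the annulus and reduce the intersection count to a count of integers in an interval. Write $\Omega = (\mathbb{R}/\mathbb{Z}) \times [0,1]$ with universal cover $\widetilde{\Omega} = \mathbb{R} \times [0,1]$ and deck group generated by $T(x,t) = (x+1,t)$, where $T$ also represents the Dehn twist about $S^1 \times \{\tfrac12\}$. Place the vertex preimages so that the preimages of $a,b$ lie at integer $x$-coordinates (on $t=0$ and $t=1$ respectively) and those of $d,c$ at half-integer $x$-coordinates. Each class $\sigma \in X$ then has a distinguished lift $\widetilde\sigma$ running from a bottom point $(p_0,0)$ to a top point $(p_1,1)$, and one checks directly from the definitions that $\tau(\sigma) = p_1 - p_0$: for $\alpha_n$ and $\delta_n$ this simply reads off the $n$ twists, and for the half-integer classes it is exactly the condition of lying between consecutive $\alpha$'s and $\delta$'s. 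Thus $\tau$ records the horizontal displacement of a lift, while the type of $\sigma$ (which of $a,d$ and which of $b,c$ it meets) records the fractional parts of $p_0$ and $p_1$.

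First I would fix taut representatives by replacing each $\widetilde\sigma$ with the straight segment between $(p_0,0)$ and $(p_1,1)$. Such a segment is a graph over the $t$-interval, hence monotone, and its projection to $\Omega$ is an embedded arc in the correct class. Two such segments cross at most once, so I claim the projected representatives are pairwise in minimal position: any bigon or half-bigon in $\Omega$ would lift to one bounded by a straight segment and a translate of another, which one checks cannot occur. By the bigon criterion for arcs this gives minimal position, so the formula will indeed compute the geometric intersection number.

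Next I would count. Fixing the lift $\widetilde\sigma_1$ from $(p_0,0)$ to $(p_1,1)$, every intersection point of $\sigma_1$ and $\sigma_2$ in $\Omega$ lifts to a unique crossing of $\widetilde\sigma_1$ with exactly one translate $T^k\widetilde\sigma_2$, running from $(q_0+k,0)$ to $(q_1+k,1)$, and conversely each crossing translate contributes one point. Two monotone segments cross precisely when their bottom and top horizontal orders disagree, i.e. when $(L-k)(R-k) < 0$ for $L := p_0 - q_0$ and $R := p_1 - q_1$; hence $|\sigma_1 \cap \sigma_2|$ equals the number of integers strictly between $L$ and $R$. Since $R - L = \tau(\sigma_1) - \tau(\sigma_2)$, this interval has length $|\tau(\sigma_1)-\tau(\sigma_2)|$, and an endpoint of it is an integer exactly when $\sigma_1,\sigma_2$ share the corresponding bottom or top vertex; thus the number of integers among $\{L,R\}$ is precisely $\eta$. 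A short case check on whether $0,1,$ or $2$ of $L,R$ are integers then yields the count $|\tau(\sigma_1)-\tau(\sigma_2)| - \tfrac{\eta}{2}$ uniformly, matching the given identity $|\alpha_n\cap\alpha_m| = |n-m|-1$ in the case $\eta = 2$.

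The main obstacle is the minimal-position claim together with the correct bookkeeping at shared endpoints: a priori the translates $T^k\widetilde\sigma_2$ that realize a common vertex (where $L$ or $R$ is an integer) could masquerade as interior crossings or conceal a half-bigon, and it is exactly their exclusion from the open interval $(L,R)$ that produces the $-\tfrac{\eta}{2}$ term. Everything else is the routine integer count above. Since $T$ preserves intersection numbers and shifts both twisting numbers by $1$, one may alternatively normalize $\tau(\sigma_1)\in\{0,\tfrac12\}$ and run the same computation as an induction on $|\tau(\sigma_1)-\tau(\sigma_2)|$, with the Dehn twist providing the inductive step.
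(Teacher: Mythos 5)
Your argument is correct, but it takes a genuinely different route from the paper, which gives no written proof at all beyond the remark that the formula is ``easy to prove by induction'' --- presumably induction on $|\tau(\sigma_1)-\tau(\sigma_2)|$ with a Dehn twist supplying the inductive step, i.e.\ exactly the alternative you sketch in your closing sentence. Your main argument instead works in the universal cover $\mathbb{R}\times[0,1]$: placing the lifts of $a,b$ at integer and of $d,c$ at half-integer abscissae makes $\tau(\sigma)$ equal to the horizontal displacement $p_1-p_0$ of a lift (this is the one identification worth checking carefully, and for $X_a^c\sqcup X_d^b$ it does reduce to the paper's normalization of $\sigma$ lying between $\alpha_n\cup\alpha_{n+1}$ and $\delta_n\cup\delta_{n+1}$, as you say); straight-segment representatives are embedded and pairwise in minimal position, since a bigon or half-bigon would lift to two straight segments meeting twice, or meeting at a shared endpoint and again in the interior, both impossible; and the geometric intersection number becomes the count of integers $k$ with $(L-k)(R-k)<0$, i.e.\ integers strictly between $L=p_0-q_0$ and $R=p_1-q_1$. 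Your bookkeeping checks out: $R-L=\tau(\sigma_1)-\tau(\sigma_2)$, an endpoint $L$ or $R$ is an integer precisely when the corresponding boundary vertex is shared, and the cases $\eta=0,1,2$ give counts $t$, $t-\tfrac{1}{2}$, $t-1$ for $t=|R-L|$, uniformly $t-\tfrac{\eta}{2}$, consistent with the paper's $|\alpha_n\cap\alpha_m|=|n-m|-1$. What your approach buys is a complete and uniform proof where the paper has only an assertion: it verifies minimal position of the representatives (which the induction sketch would have to take for granted, or check by picture, in its base cases) and makes the $-\tfrac{\eta}{2}$ correction transparent as the exclusion of integer endpoints from an open interval; the induction route is shorter only because it is never actually carried out. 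The sole implicit hypothesis you share with the paper is that $\sigma_1,\sigma_2$ are distinct classes, since for $\sigma_1=\sigma_2$ with $\eta=2$ the formula would output $-1$.
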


\begin{figure}
    \centering
    \includegraphics[width=6cm]{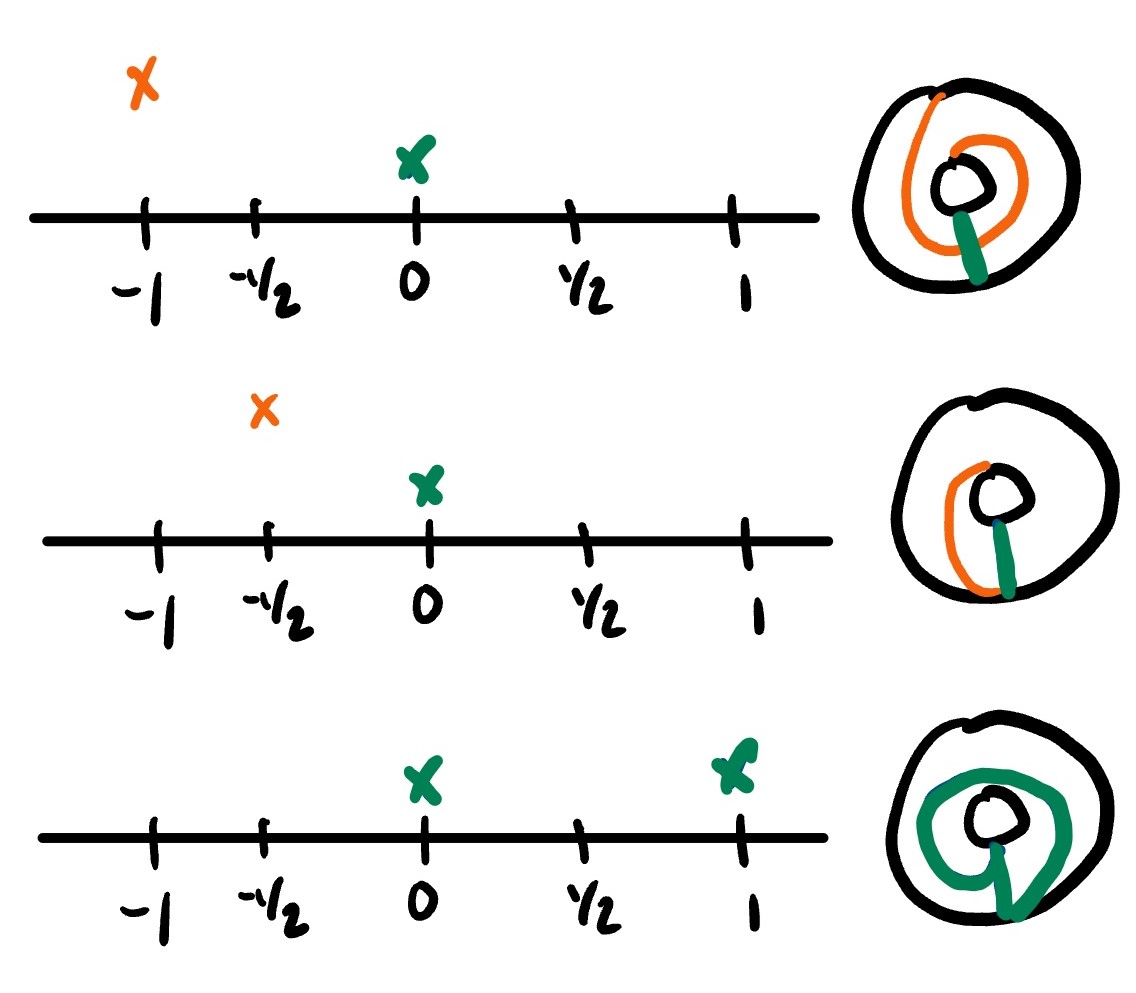}
    \caption{Examples of $\frac{1}{2} \Z_a$ (green) and $\frac{1}{2} \Z_d$ (orange) }
    \label{bookkeepex}
\end{figure}

We next introduce two copies of $\frac{1}{2} \Z$, labelled $\frac{1}{2} \Z_a$ and $\frac{1}{2} \Z_d$ corresponding via $\tau$ to $X_a^b\sqcup X_a^c$ and to $X_d^b\sqcup X_d^c$, respectively. Some examples are illustrated in Figure \ref{bookkeepex}. 

\begin{remark}\label{rem}
By Lemma \ref{mainth} for $\sigma_1,\sigma_2\in\frac{1}{2}\Z_a$, we have $|\tau(\sigma_1)-\tau(\sigma_2)|\leq 2$, so in particular there are most five arcs in $\frac{1}{2}\Z_a$. Furthermore, for $\sigma_1\in \frac{1}{2}\Z_a$ and $\sigma_2\in \frac{1}{2}\Z_d$, we have $|\tau(\sigma_1)-\tau(\sigma_2)|\leq \frac{3}{2}$
\end{remark}
Define the systems $\mathbb{F},\mathbb{G}$ as shown in Figure \ref{hehe}. Since $\deg(\mathbb{F})=(6,6,4,4)$ and $\deg(\mathbb{G})=(5,5,5,5)$, $\mathbb{F}\not\simeq\mathbb{G}$. By Remark \ref{rem}, we have the following:

\begin{figure}[htp]
    \centering
    \includegraphics[width=7cm]{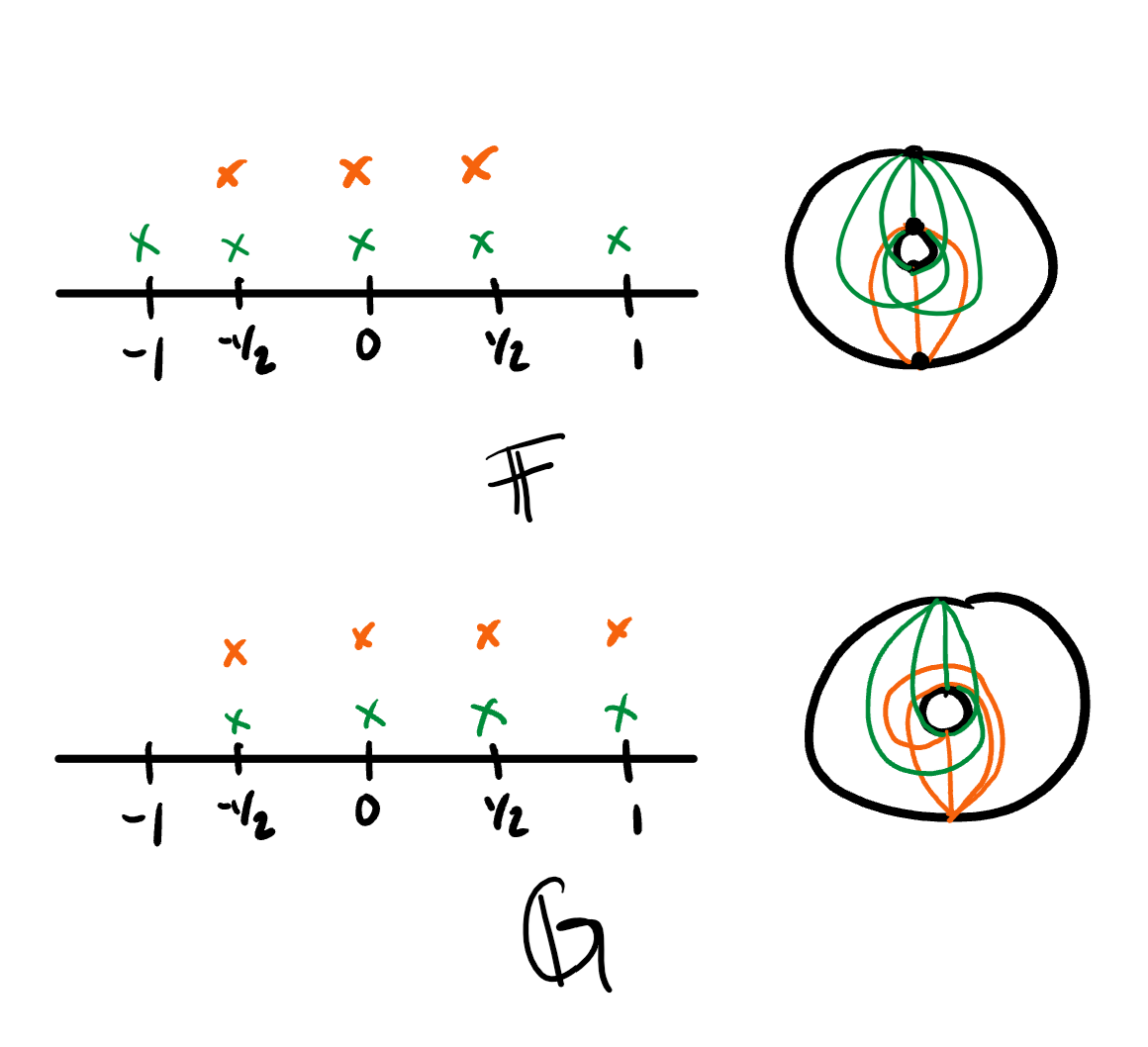}
    \caption{$\mathbb{F}, \mathbb{G}$ maximal 1-systems not containing loops}
    \label{hehe}
\end{figure}

\begin{corollary}\label{corann}
$\mathbb{F}$ and $\mathbb{G}$, as shown in Figure \ref{hehe}, are the two maximal 1-systems on $\Omega$ which do not contain loops.
\end{corollary}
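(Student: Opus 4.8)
The plan is to encode a loop-free $1$-system $\mathbb{A}$ on $\Omega$ by the twisting bookkeeping of Figure \ref{bookkeepex} and then maximize over the two lines $\frac{1}{2}\Z_a$ and $\frac{1}{2}\Z_d$. Since $\mathbb{A}$ has no loops, each of its arcs is either \emph{inter-boundary}, i.e. an element of $X$ recorded by $\tau$ on one of the two lines, or a \emph{same-boundary} arc joining $a$ to $d$ or $b$ to $c$. For the inter-boundary arcs I would read everything off Lemma \ref{mainth}: from $|\sigma_1\cap\sigma_2|=|\tau(\sigma_1)-\tau(\sigma_2)|-\tfrac{\eta}{2}\le 1$ one gets $|\tau(\sigma_1)-\tau(\sigma_2)|\le 1+\tfrac{\eta}{2}$, so the allowed gap is $2,\tfrac32,1$ according as the two arcs share $\eta=2,1,0$ endpoints. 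The cases $\eta=2,1$ are exactly Remark \ref{rem}; the sharp case $\eta=0$ is the extra input I would exploit.

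The counting step splits the inter-boundary arcs by the parity of their twisting number, equivalently by their endpoint on the top boundary: the integer classes are $X_a^b$ and $X_d^c$, the half-integer classes $X_a^c$ and $X_d^b$. An arc of $X_a^b$ and an arc of $X_d^c$ share no endpoint, so any two of their twisting numbers lie within $1$; together with the within-class bound this forces at most four arcs in $X_a^b\cup X_d^c$, and symmetrically at most four in $X_a^c\cup X_d^b$. Hence $\mathbb{A}$ has at most eight inter-boundary arcs. It then remains to show the same-boundary arcs contribute at most $2$, so that $|\mathbb{A}|\le 10$. Since $\deg(\mathbb{F})=(6,6,4,4)$ and $\deg(\mathbb{G})=(5,5,5,5)$ both sum to $20$, each of $\mathbb{F},\mathbb{G}$ has exactly ten arcs and so attains the bound.

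Finally I would classify the ten-arc configurations up to the symmetries of $\Omega$ and equivalence. The relevant homeomorphisms are the Dehn twist about the core (shifting $\tau$ by $1$), the involution exchanging the two boundary circles, the half-turn exchanging $a\leftrightarrow d$ and $b\leftrightarrow c$, and an orientation-reversing reflection (negating $\tau$). After normalizing by a Dehn twist, the three gap inequalities leave only two feasible distributions of the eight inter-boundary arcs: a balanced one with four arcs on each line and an unbalanced one with five and three (the would-be splits such as three integer and one half-integer on one line are killed by the $\tfrac32$ bound). Each completes uniquely via the two same-boundary arcs to a system of degree $(5,5,5,5)$ or $(6,6,4,4)$; as $\deg(\mathbb{A})$ is an invariant of the equivalence class, these are exactly $\mathbb{G}$ and $\mathbb{F}$ of Figure \ref{hehe}. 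I expect the main obstacle to be precisely the same-boundary arcs: they are invisible to $\tau$, so both bounding their number and verifying that they meet each inter-boundary arc at most once — which is what makes the two completions unique and rules out any third maximal system — seem to require a separate cut-and-count argument (for instance, cutting $\Omega$ along such an arc into a rectangle and counting as in Lemma \ref{tech}) rather than the twisting-number calculus.
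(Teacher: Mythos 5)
Your treatment of the inter-boundary arcs is essentially the paper's own argument, carried out more carefully: the paper deduces Corollary \ref{corann} in one line from Remark \ref{rem}, and your gap bounds $2,\tfrac{3}{2},1$ for $\eta=2,1,0$ (the last being sharper than the remark's uniform $\tfrac{3}{2}$) do correctly give at most four arcs in each parity class $X_a^b\sqcup X_d^c$ and $X_a^c\sqcup X_d^b$, hence at most eight arcs of $X$, attained only by the $5{+}3$ and $4{+}4$ distributions underlying $\mathbb{F}$ and $\mathbb{G}$. (One small slip: a line such as $\{0,1,2,\tfrac{3}{2}\}$, three integers and one half-integer, is \emph{not} excluded by the within-line $\tfrac{3}{2}$ bound — it satisfies all within-line constraints — it is excluded because the cross-line $\eta=0$ bound then caps the other line at three arcs.)

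The gap you flag yourself is genuine, and it is precisely the step on which the corollary turns: the same-boundary arcs joining $a$ to $d$ and $b$ to $c$. Note that these are not scarce on their own: indexing $a$--$d$ classes by a half-integer winding $t$ (the displacement of endpoints of lifts in the universal cover of $\Omega$), the essential, non-boundary-parallel classes are $t=\pm\tfrac{3}{2},\pm\tfrac{5}{2},\dots$, and \emph{consecutive classes such as $t=\tfrac{3}{2}$ and $t=\tfrac{5}{2}$ admit disjoint representatives}. So ``same-boundary arcs contribute at most $2$'' cannot be proved by looking at same-boundary arcs alone; the bound of one $a$--$d$ class and one $b$--$c$ class, and the identification of \emph{which} classes, must come from their intersections with the eight inter-boundary arcs (an arc of winding $t$ meets $\alpha_n$ a number of times growing linearly in $|t-n|$, so a spread of twisting values of length $\geq\tfrac{3}{2}$ on the $a$-line pins $t$ to a single class). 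Without this, you can conclude neither $|\mathbb{A}|\leq 10$ — a priori a system might trade inter-boundary arcs for several same-boundary ones — nor the uniqueness of the completions to $\mathbb{F}$ and $\mathbb{G}$. In fairness, the paper suppresses this step entirely, so your write-up is already more explicit than the source; but to count as a proof it needs the missing calculus executed, for instance an intersection formula for winding classes in the spirit of the $\tau^*$ invariant the paper develops for $a$--$d$ arcs in Section \ref{1cut}, or your proposed cut-and-count via Lemma \ref{tech}. As it stands, the proposal establishes only the inter-boundary half of the corollary.
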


The following proposition shows that any maximal 1-system on  $\Sigma$ must have nonseparating disjoint subset.

\begin{prop}\label{nonsep}
Let $\mathbb{A}$ be a maximal 1-system on $\Sigma$, in which all arcs are in pairwise minimal position. Then the disjoint subset $\mathbb{J}$ is nonseparating, that is, $\Sigma-\mathbb{J}$ is connected.

\begin{proof}
Suppose otherwise, so let $\mathbb{J}$ be the disjoint subset of maximal 1-system $\mathbb{A}$, and let $\mathbb{J}$ be separating. For each arc $\alpha\in \mathbb{A}-\mathbb{J}$, notice that $\alpha$ is contained in a connected component of $\Sigma-\mathbb{J}$. Let $\mathbb{C}\subset \Gamma_{\mathbb{J}}$ be a cycle. As we have four punctures, we have $|\mathbb{C}|\leq 4$. 

\begin{figure}[htp]
    \centering
    \includegraphics[width=3cm]{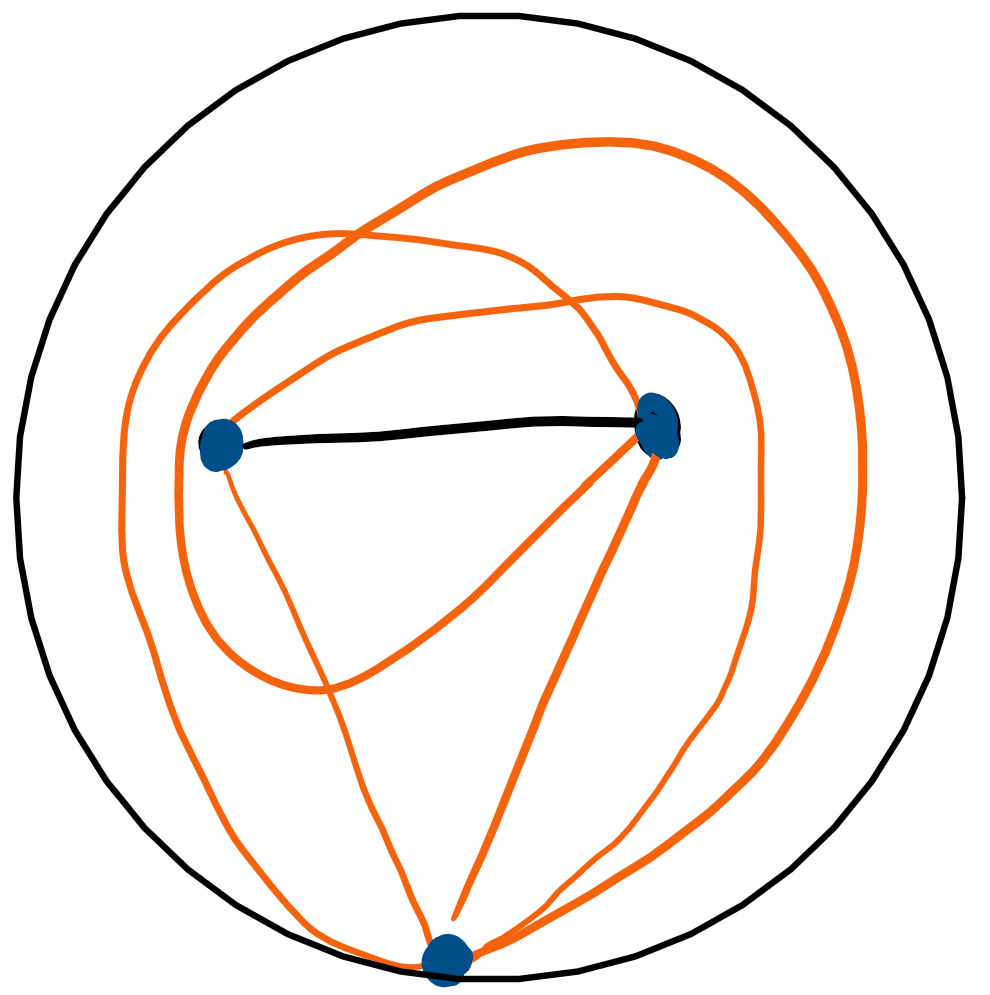}
    \caption{Cut $\mathcal{W}$ along $\sigma$ (black)}
    \label{nowayjose}
\end{figure}

First, assume $|\mathbb{C}|=1$, so $\mathbb{C}=\ell_p$, a loop based at $p$. $\ell_p$ divides $\Sigma$ into two components, one component with one puncture and the other component with two punctures. By Lemma \ref{tech}, the component with one puncture has at most 1 arc. For the other component $\mathcal{W}$, consider the unique class of arc between the two punctures. We can choose a representative $\sigma$ such that any arc  which is not a loop can be taken to miss this $\sigma$. Cutting along this arc, we are reduced down to Remark \ref{rem}, where we maximize the number of arcs between punctures $a,b,c$. Thus, we have $5$ arcs which are not loops in $\mathcal{W}-\sigma$, shown by the orange arcs in Figure \ref{nowayjose}. It is clear we can add at most 1 loop in $\mathcal{W}-\sigma$ based at the left puncture. This gives a total of at most $9$ arcs in $\mathbb{A}$, a contradiction.

If $|\mathbb{C}|=2$, then $\Sigma-\mathbb{C}$ must be two once-punctured bigons. By Lemma \ref{tech}, each punctured bigon has at most $3$ arcs. This gives a total of at most $8$ arcs in $\mathbb{A}$, a contradiction. 

If $|\mathbb{C}|=3$, then $\Sigma-\mathbb{C}$ must be a triangle and a once-punctured triangle. By Lemma \ref{tech}, the punctured triangle has at most $6$ arcs. This gives a total of at most $9$ arcs in $\mathbb{A}$, a contradiction.

If $|\mathbb{C}|=4$, then $\Sigma-\mathbb{C}$ must be two squares. As each square contains at most 2 arcs, we have at most 8 arcs if $\Gamma_\mathbb{J}$ contains a cycle of length $4$. This gives the desired contradiction. 

\end{proof}
\end{prop}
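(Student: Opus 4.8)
The plan is to prove the contrapositive structure already implicit in the statement: assuming $\mathbb{J}$ separates $\Sigma$, I would exhibit in every case an upper bound strictly below $|\mathbb{A}|=12$, which is the cardinality forced by Theorem \ref{max1sys}. The key observation driving everything is that if $\mathbb{J}$ is separating, then $\Gamma_{\mathbb{J}}$ contains a cycle $\mathbb{C}$ (a separating collection of disjoint arcs on a sphere must bound, so it yields a closed curve through punctures), and since $\Sigma$ has only four punctures, $|\mathbb{C}|\le 4$. I would then stratify the argument by $|\mathbb{C}|\in\{1,2,3,4\}$, and in each case use the fact that cutting along $\mathbb{C}$ decomposes $\Sigma$ into punctured polygons whose arc-capacities are controlled by Lemma \ref{tech} and Remark \ref{rem}.

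First I would justify the reduction to a cycle and the bound $|\mathbb{C}|\le 4$ carefully, since the statement ``$\mathbb{J}$ separating $\implies \Gamma_{\mathbb{J}}$ has a cycle'' is the conceptual heart and deserves more than the one-line treatment: a separating set of disjoint arcs on the punctured sphere must, after compactifying, create a nontrivial loop in the graph $\Gamma_{\mathbb{J}}$ whose complement disconnects $\overline{\Sigma}$. Then I would run the four cases. For $|\mathbb{C}|=2,3,4$ the bookkeeping is clean: the two complementary pieces are (punctured) bigons, triangles, or squares, and Lemma \ref{tech} gives capacities $3,6,2$ respectively, so summing over the two pieces yields $8$, $9$, and $8$ arcs — all $<12$. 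The subtle point is that arcs of $\mathbb{A}-\mathbb{J}$ cannot cross $\mathbb{C}$ (as they would then cross some disjoint arc, but arcs in $\mathbb{J}$ meet no other arc), so each such arc lives entirely in one complementary region, making the additive count legitimate.

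The main obstacle is the $|\mathbb{C}|=1$ case, where $\mathbb{C}=\ell_p$ is a single loop cutting $\Sigma$ into a once-punctured monogon (capacity $1$ by Lemma \ref{tech}) and a twice-punctured piece $\mathcal{W}$ whose capacity is not directly a polygon case. Here the argument is genuinely more delicate: I would pick the canonical arc $\sigma$ between the two punctures of $\mathcal{W}$, argue via the bigon criterion that every non-loop arc in $\mathcal{W}$ can be homotoped off $\sigma$, and then cut along $\sigma$ to land in the annular model $\Omega$ so that Remark \ref{rem} applies, capping the non-loop arcs at $5$. The remaining care is in counting loops that can be added in $\mathcal{W}-\sigma$ — I would verify that at most one such loop is admissible — giving $1+5+1=7$ in $\mathcal{W}$ together with the monogon, well under $12$.

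Having obtained contradictions in all four cases, I would conclude that $\Gamma_{\mathbb{J}}$ is acyclic, hence a forest on four vertices, and a disjoint arc system whose graph is a forest cannot separate the sphere; therefore $\Sigma-\mathbb{J}$ is connected. The only place I would slow down relative to the excerpt is making the ``separating $\Rightarrow$ cycle'' step and the ``non-loop arcs avoid $\sigma$'' step fully rigorous, since the rest is a finite arithmetic check against the target $12$.
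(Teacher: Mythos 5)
Your proposal is correct and follows essentially the same route as the paper's own proof: deduce a cycle $\mathbb{C}\subset\Gamma_{\mathbb{J}}$ with $|\mathbb{C}|\le 4$ from separation, then bound the arc count in each case $|\mathbb{C}|=1,2,3,4$ via Lemma \ref{tech} and Remark \ref{rem} (with the same cut-along-$\sigma$ reduction in the $|\mathbb{C}|=1$ case) to contradict $|\mathbb{A}|=12$. Your added care on the ``separating $\Rightarrow$ cycle'' step and on why arcs of $\mathbb{A}-\mathbb{J}$ stay in a single complementary component only makes explicit what the paper leaves implicit.
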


For the disjoint subset $\mathbb{J}\subset \mathbb{A}$, let $\Gamma_\mathbb{J}$ the induced embedded graph with vertex set $\mathcal{P}$ and edge set $\mathbb{J}$. Any loops or cycles in $\Gamma_\mathbb{J}$ will separate $\Sigma$. Combining with Lemma \ref{nonsep}, we see the following corollary. 

\begin{corollary}
$|\mathbb{J}|\leq 3$.
\end{corollary}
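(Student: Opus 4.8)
The plan is to show that $\Gamma_{\mathbb{J}}$ is a forest on the four punctures $\mathcal{P}$, from which $|\mathbb{J}|\leq 3$ follows immediately, since a forest on $n$ vertices has at most $n-1$ edges. The key observation, already recorded in the paragraph preceding the statement, is that any loop or nontrivial cycle in $\Gamma_{\mathbb{J}}$ would separate $\Sigma$. First I would recall that $\mathbb{J}$ consists of pairwise disjoint arcs (being the disjoint subset of $\mathbb{A}$, each arc of $\mathbb{J}$ misses every other arc of $\mathbb{A}$, in particular every other arc of $\mathbb{J}$), so $\Gamma_{\mathbb{J}}$ is genuinely an embedded graph in $\overline{\Sigma}$ with vertex set $\mathcal{P}$ and edge set $\mathbb{J}$.

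The heart of the argument is the interaction with Proposition \ref{nonsep}, which guarantees that $\Sigma - \mathbb{J}$ is connected. I would argue by contradiction: suppose $\Gamma_{\mathbb{J}}$ is \emph{not} a forest, i.e.\ it contains a cycle (where a loop at a single puncture counts as a cycle of length one). Every simple closed curve formed by such a cycle in $\overline{\Sigma}$, being a cycle in a planar graph, bounds a disk on each side by the Jordan curve theorem on the sphere $\overline{\Sigma}$; removing the punctures it still separates $\Sigma$ into (at least) two pieces, each containing at least one puncture in its interior since the arcs of a $1$-system are essential and cannot be homotoped into a puncture. This contradicts the connectivity $\Sigma - \mathbb{J}$ established in Proposition \ref{nonsep}, because a separating subgraph would disconnect $\Sigma - \mathbb{J}$ itself.

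Thus $\Gamma_{\mathbb{J}}$ is a forest on $|\mathcal{P}| = 4$ vertices, so $|\mathbb{J}| = |E(\Gamma_{\mathbb{J}})| \leq 4 - 1 = 3$, as claimed.

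I expect the main obstacle to be making precise the claim that a cycle (or loop) in the embedded graph $\Gamma_{\mathbb{J}}$ actually \emph{separates} $\Sigma$ as a topological space, as opposed to merely separating $\overline{\Sigma}$. The subtlety is that $\overline{\Sigma}$ is a sphere, on which any embedded cycle separates by the Jordan curve theorem; but one must check that each complementary region contains a puncture, so that removing the punctures does not reconnect the pieces. This is where essentiality of the arcs enters: an essential arc or loop cannot be homotoped into a puncture, so a loop bounds a region with at least one puncture inside, and a longer cycle passes through distinct punctures and bounds regions each of which must contain at least one of the remaining punctures. Once this separation claim is secured, the contrapositive of Proposition \ref{nonsep} closes the argument cleanly.
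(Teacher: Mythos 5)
Your argument is the paper's argument: the paper's entire proof of this corollary is the sentence preceding it --- any loop or cycle in $\Gamma_{\mathbb{J}}$ separates $\Sigma$, which combined with Proposition \ref{nonsep} forces $\Gamma_{\mathbb{J}}$ to be a forest on the four punctures, hence $|\mathbb{J}|\leq 4-1=3$ --- and your proposal fills in exactly this outline. One caveat: your auxiliary claim that each complementary region of a cycle must contain a puncture is false for cycles of length $3$ and $4$; as the paper's own proof of Proposition \ref{nonsep} records, a $3$-cycle cuts off an unpunctured triangle, and a $4$-cycle bounds two unpunctured squares (all four punctures then lie on the cycle itself). Fortunately the claim is also unnecessary: the worry that removing the punctures might ``reconnect the pieces'' is vacuous, since deleting points (or the remaining $1$-dimensional arcs of $\mathbb{J}$) from the two open disks of $\overline{\Sigma}-C$ can only disconnect further, never reconnect, and each open disk minus finitely many punctures and finitely many arcs is still nonempty, so $\Sigma-\mathbb{J}$ meets both sides and is disconnected no matter where the punctures lie. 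With that one sentence excised, your proof is correct and coincides with the paper's.
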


\section{Classifying Maximal 1-Systems}\label{1systems}
\subsection{\texorpdfstring{$|\mathbb{J}|=3$}{Lg}}\label{3cut}

\begin{prop}\label{classify3}
There are two maximal 1-systems on $\Sigma$ for $|\mathbb{J}|=3$, shown in Figure \ref{3cutmax}.
\end{prop}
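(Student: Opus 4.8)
The plan is to show that, once $|\mathbb{J}|=3$, the disjoint subset $\mathbb{J}$ is forced to be a spanning tree of the four punctures, and that the nine remaining arcs are then completely determined, leaving exactly one system per embedded tree type.

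First I would pin down the combinatorial type of $\mathbb{J}$. By Proposition \ref{nonsep} the subset $\mathbb{J}$ is nonseparating, and any cycle or loop in $\Gamma_\mathbb{J}$ is a simple closed curve in $S^2$ and hence separates the planar surface $\Sigma$; so $\Gamma_\mathbb{J}$ is a forest. A forest on the four punctures with three edges has $4-3=1$ component, so $\Gamma_\mathbb{J}$ is a spanning tree, and therefore one of the two embedded trees $T_a$ (the path) or $T_b$ (the star) of Section \ref{0systems} (Figure \ref{fig:embtree}), each unique up to a homeomorphism of $\overline\Sigma$.

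Next I would cut $\Sigma$ along $\mathbb{J}$. As recorded in Section \ref{0systems}, cutting along either tree yields an ideal hexagon $H$ whose six sides are the two copies of each tree edge and whose six ideal corners are the punctures, each appearing with multiplicity equal to its degree in the tree. Since every arc of $\mathbb{J}$ is disjoint from all other arcs of $\mathbb{A}$, each of the $12-3=9$ arcs of $\mathbb{A}-\mathbb{J}$ is disjoint from $\mathbb{J}$ and so lies in $H$; because $H$ is a disk, such an arc is determined up to homotopy by the pair of corners it joins, i.e. it is one of the nine diagonals of $H$. As the number of diagonals equals $9=|\mathbb{A}-\mathbb{J}|$, the set $\mathbb{A}-\mathbb{J}$ must be \emph{all} nine diagonals. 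Thus the system is uniquely determined by the tree type, which already caps the count at two.

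It then remains to verify that each candidate $\mathbb{J}\cup\{\text{diagonals}\}$ is genuinely a maximal $1$-system whose disjoint subset is exactly $\mathbb{J}$. Drawn as chords of $H$, any two diagonals meet at most once, so pairwise intersection numbers are automatically at most one and we obtain a $1$-system of the maximal cardinality $12$. I would then check, separately for $T_a$ and $T_b$, that the nine diagonals are essential and pairwise nonhomotopic in $\Sigma$: the loop diagonals each enclose a single puncture and so are essential as in the discussion preceding Lemma \ref{loopisajail}, while two chords meeting a common puncture through distinct corners are nonhomotopic because the hexagon region between those corners always contains a further puncture. The main obstacle, and the step needing the most care, is confirming that the hexagon crossings survive in minimal position — a crossing could a priori be cancelled by a bigon or half-bigon created by the gluing. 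Using the bigon criterion for arcs, I would argue that any would-be bigon in $H$ encloses a puncture and hence is not an actual bigon, so each diagonal keeps a genuine intersection with some other diagonal; this shows the disjoint subset is precisely the three tree edges, realizing $|\mathbb{J}|=3$. Finally, since each corner of the hexagon is an endpoint of exactly three diagonals, a direct count gives degree sequences $\{4,4,8,8\}$ for $T_a$ and $\{4,4,4,12\}$ for $T_b$; as the degree sequence is an invariant, the two systems are inequivalent, giving exactly two maximal $1$-systems with $|\mathbb{J}|=3$.
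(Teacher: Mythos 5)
Your proposal is correct and follows essentially the same route as the paper: $\Gamma_\mathbb{J}$ is forced to be a spanning tree ($T_a$ or $T_b$), cutting yields an ideal hexagon whose nine diagonals must all be used to reach $12$ arcs, giving exactly two systems. You carry out more verification than the paper does (essentiality, pairwise nonhomotopy, and that the crossings survive in minimal position --- the last of which also follows at once from the corollary $|\mathbb{J}|\leq 3$, since the three tree edges always lie in $\mathbb{J}$) and you distinguish the two systems by the degree sequences $(4,8,8,4)$ and $(4,12,4,4)$ rather than by the nonhomeomorphic graphs $\Gamma_\mathbb{J}$, but these are refinements of the same argument, not a different approach.
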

As $\mathbb{J}$  is a tree, $\mathbb{J}$ must be either $T_a$ or $T_b$ from Section \ref{0systems}. In either case, $\Sigma-\Gamma_\mathbb{J}$ is a hexagon, and we need to find 9 arcs joining two non-adjacent vertices of a hexagon. Any two pairs of vertices must be used at most once by the homotopy condition. This forces us to take the complete graph on $6$ vertices. The two 1-systems, each corresponding to the following graphs in Figure \ref{3cutmax}. These form distinct 1-systems, as they have nonhomeomorphic $\Gamma_\mathbb{J}$.  

\begin{figure}[htp]
    \centering
    \includegraphics[width=8cm]{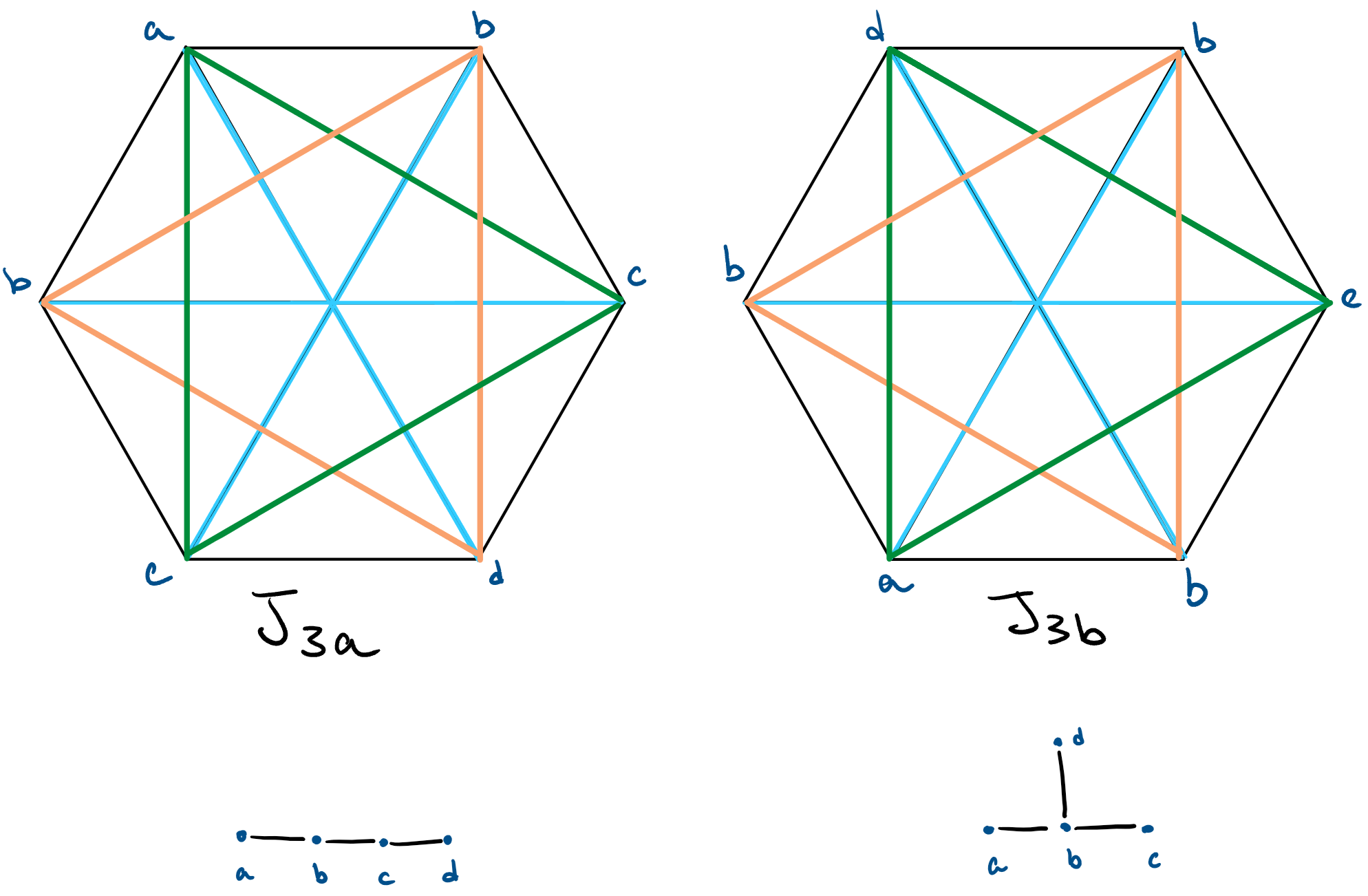}
    \caption{$J_{3a}, J_{3b}$ corresponding to $T_a$, $T_b$}
    \label{3cutmax}
\end{figure}

\subsection{\texorpdfstring{$|\mathbb{J}|=2$}{Lg}}\label{2cut}
\begin{prop}\label{classify2}
There are five maximal 1-systems on $\Sigma$ for $|\mathbb{J}|=2$. Two are shown in Figure \ref{max1syssq}, two are shown in Figure \ref{53loops}, and one is shown in Figure \ref{44loops}.
\end{prop}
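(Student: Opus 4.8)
The plan is to stratify by the isomorphism type of the graph $\Gamma_\mathbb{J}$. Since $|\mathbb{J}|=2$ and, by Proposition \ref{nonsep} together with the corollary $|\mathbb{J}|\leq 3$, the set $\mathbb{J}$ is a nonseparating forest on the four vertices $\mathcal{P}$ with no loop or cycle, $\Gamma_\mathbb{J}$ is either a \emph{matching} (two disjoint edges, say $ab$ and $cd$) or a \emph{path} (two edges sharing a vertex, say $ab$ and $bc$, with $d$ isolated). Each embeds into $\overline{\Sigma}$ uniquely up to a self-homeomorphism, exactly as the trees of Section \ref{0systems}, so there are two cases. In both, $\chi(\Sigma-\mathbb{J})=\chi(\Sigma)+2=0$ and $\Sigma-\mathbb{J}$ is connected, so the complement is an annulus. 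More precisely I would record that cutting along a matching yields the annulus $\Omega$ with the punctures distributed as $\{a,b\}$ and $\{c,d\}$ on its two boundary circles, whereas cutting along the path yields a once-punctured quadrilateral $\mathcal{D}_4$ with central puncture $d$ and boundary reading $a,b,c,b$ cyclically. The task is then to fill the complement with the remaining $|\mathbb{A}|-|\mathbb{J}|=10$ arcs.

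For the matching case I would first treat the loop-free subcase: the ten arcs of $\mathbb{A}-\mathbb{J}$ form a maximal loop-free $1$-system on $\Omega$, so by Corollary \ref{corann} they are $\mathbb{F}$ or $\mathbb{G}$. Re-gluing along $\mathbb{J}$ and checking, via the bigon criterion, that no new bigon or homotopy is created yields two systems, distinguished by their degree sequences $(7,7,5,5)$ and $(6,6,6,6)$; these are the systems of Figure \ref{max1syssq}. To incorporate loops I would use that every essential loop on $\Sigma$ cuts off a once-punctured disk, so by Lemma \ref{loopisajail} each loop of $\mathbb{A}$ is parallel to one of the two arcs of $\mathbb{J}$ (a loop at $a$ around $b$, or at $c$ around $d$, and so on). Such a loop together with its $\mathbb{J}$-arc fills a once-punctured disk that no other arc may enter, capping off one end of $\Omega$ and reducing the remaining count to an application of Remark \ref{rem}. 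Performing this reduction for the admissible numbers and placements of loops should produce the systems of Figures \ref{53loops} and \ref{44loops}.

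For the path case, the complement is the once-punctured quadrilateral $\mathcal{D}_4$. Here any loop would enclose $d$ and hence, by Lemma \ref{loopisajail}, force an arc between $d$ and one of $a,b,c$ into $\mathbb{J}$, which is impossible since $d$ is isolated in $\Gamma_\mathbb{J}$; thus $\mathbb{A}-\mathbb{J}$ is loop-free in this case. I would then enumerate the loop-free $10$-arc configurations on $\mathcal{D}_4$ using the constraints established in the proof of Lemma \ref{tech} (at most two adjacent arcs, sharing an endpoint, together with the opposite and vertex-to-puncture arcs), and reconcile the results with the matching case, recognizing each either, after a self-homeomorphism of $\Sigma$, as an already-listed system or as one of the remaining new ones. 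Finally I would confirm that the five systems are pairwise inequivalent by comparing $\deg(\mathbb{A})$ together with the isomorphism type of $\Gamma_\mathbb{J}$, and that each contains exactly $2|\chi|(|\chi|+1)=12$ arcs, hence is maximal by Theorem \ref{max1sys}.

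The main obstacle is the loop bookkeeping in the matching case combined with deduplication across the two types. I must pin down precisely how many loops, and in which positions, can coexist with a maximal complementary configuration, keeping the total at $12$ while ensuring the $1$-system condition survives re-gluing; and I must guard against configurations arising from the path decomposition being silently identical, up to a homeomorphism of $\Sigma$, to ones already obtained from the matching decomposition. Guaranteeing that the list of five is simultaneously \emph{complete} and \emph{non-redundant}, rather than merely exhibiting five systems, is the delicate part.
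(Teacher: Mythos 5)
There is a genuine quantitative error at the heart of your matching case, and it inverts your whole case assignment. You assert that when $\Gamma_\mathbb{J}$ is a matching there is a loop-free subcase in which the ten arcs of $\mathbb{A}-\mathbb{J}$ form a loop-free maximal $1$-system on $\Omega$, recovered via Corollary \ref{corann} as $\mathbb{F}$ or $\mathbb{G}$ and yielding the systems of Figure \ref{max1syssq}. But Remark \ref{rem} caps any loop-free $1$-system on $\Omega$ at $8$ arcs: five arcs in $\frac{1}{2}\Z_a$ span a $\tau$-interval of length $2$ and then force at most three in $\frac{1}{2}\Z_d$, while the balanced option gives $4+4$. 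Indeed $\mathbb{F}$ and $\mathbb{G}$ as drawn (with the two $\mathbb{J}$-arcs included) have only $10$ arcs in total, as their degree sequences $(6,6,4,4)$ and $(5,5,5,5)$ confirm. So your loop-free matching subcase is vacuous --- every matching-type maximal system necessarily contains exactly two loops, one encircling each boundary circle of $\Omega$ --- and the actual content of the paper's argument is precisely the loop bookkeeping you defer: determining which loop placements are compatible with $\mathbb{F}$ respectively $\mathbb{G}$, and identifying equivalent placements via an inversion in the core circle (for $\mathbb{F}$, giving Figure \ref{53loops}) and a half Dehn twist about the inner boundary circle (for $\mathbb{G}$, giving Figure \ref{44loops}). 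Your supporting claim that the once-punctured disk filled by a loop and its $\mathbb{J}$-arc is one ``that no other arc may enter'' is also false: by the proof of Lemma \ref{loopisajail}, every arc ending at the enclosed puncture enters that disk, so no clean ``capping off'' reduction to Remark \ref{rem} is available.

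Consequently the two systems of Figure \ref{max1syssq} belong to the \emph{connected} (path) case on $\mathcal{D}_4$, not to the matching case: they are loop-free by Lemma \ref{loopisajail} (as you correctly argue, since $d$ is isolated in $\Gamma_\mathbb{J}$), and their degrees are $(6,10,4,4)$ and $(5,10,5,4)$ --- note the $10$ at the middle vertex of the path, which is impossible for a matching --- rather than your invented $(7,7,5,5)$ and $(6,6,6,6)$. Your path-case plan (enumeration on $\mathcal{D}_4$ via the constraints of Lemma \ref{tech}) is sound in outline and matches the paper's, but your proposed ``reconciliation'' of path-case systems against matching-case ones is unnecessary: $\mathbb{J}$ is canonically determined by $\mathbb{A}$ as the subset of arcs disjoint from all others, so the isomorphism type of $\Gamma_\mathbb{J}$ already separates the two strata and no cross-case duplication can occur. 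In summary, the stratification by $\Gamma_\mathbb{J}$ and the final inequivalence check by degrees are correct and agree with the paper, but the matching case --- where all five minus two of the systems actually live and where the real work is --- rests on a miscount contradicting Remark \ref{rem}, and the delicate completeness/non-redundancy analysis you flag as the obstacle is exactly the part left unproved.
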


If $\mathbb{J}$ contained 2 arcs, the distinguishing factor only is if the induced graph is connected or not. If so, then $\Sigma-\Gamma_\mathbb{J}$ is a once-punctured square. If not, then $\Sigma-\Gamma_\mathbb{J}$ is an annulus with two vertices and two edges for each boundary component. The two possibilities correspond to $P_4$ in Lemma \ref{tech}, and $\Omega$ in Corollary \ref{corann}. In both cases, we aim to find $10$ arcs in $\Sigma-\Gamma_\mathbb{J}$.

\begin{figure}[htp]
    \centering
    \includegraphics[width=8cm]{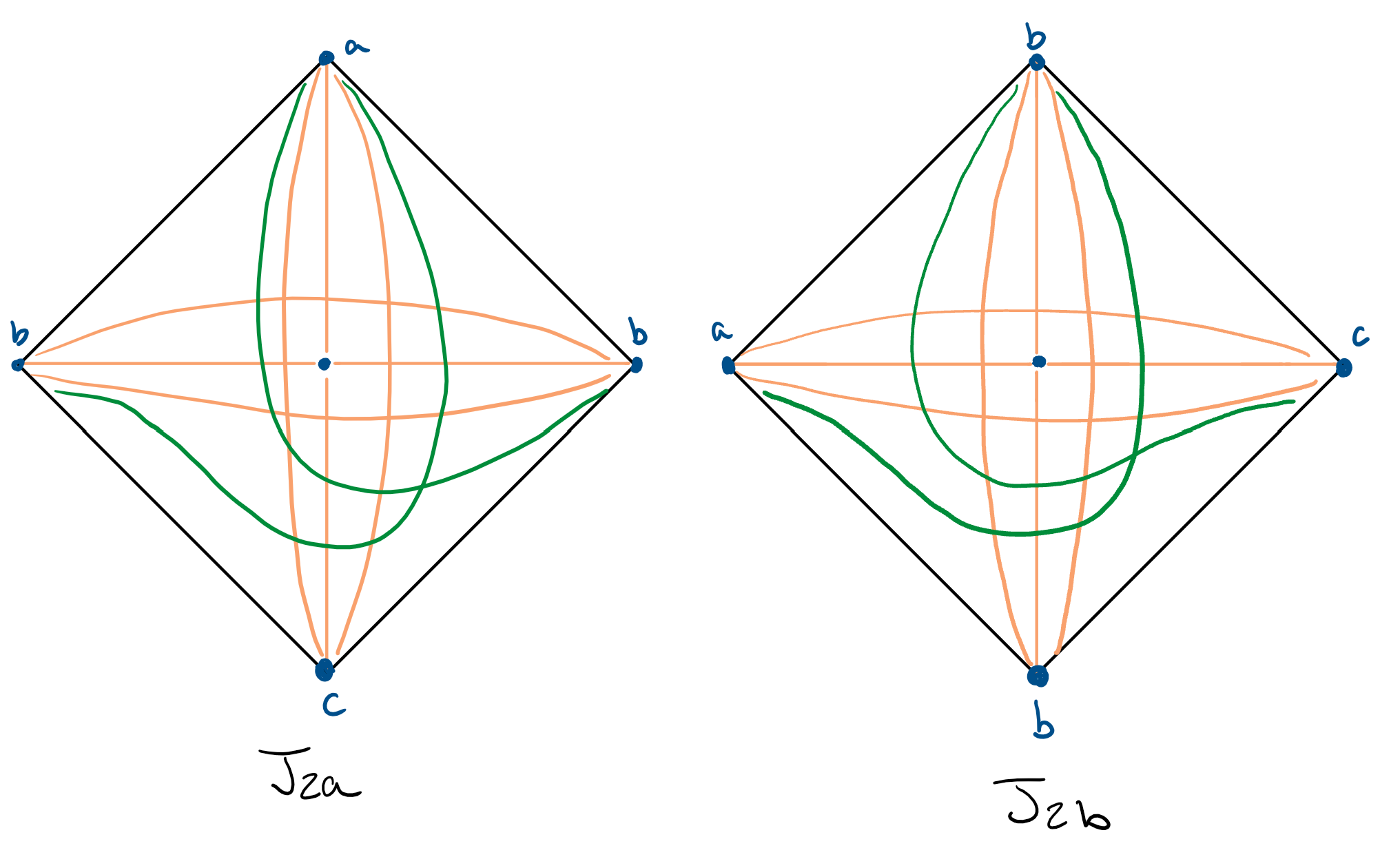}
    \caption{$J_{2a}, J_{2b}$ on $\mathcal{D}_4$ for connected $\Gamma_\mathbb{J}$}
    \label{max1syssq}
\end{figure}

Assume $\Gamma_\mathbb{J}$ is connected, and we see that we have two cases in Figure \ref{max1syssq}. By Lemma \ref{loopisajail}, we have no loops in $\mathbb{A}$. After adding in labels, We can distinguish the two systems by considering degrees: $\deg(J_{2a})=(6,10,4,4)$ and $\deg(J_{2b})=(5,10,5,4)$. Thus, for connected $\Gamma_\mathbb{J}$, Figure \ref{max1syssq} yields two maximal 1-systems.

Now assume $\Gamma_\mathbb{J}$ is disconnected. This brings us to the $\Omega$, and all that remains is to extend $\mathbb{F}$ and $\mathbb{G}$ in Corollary \ref{corann} to maximal 1-systems by adding in loops.

Adding loops to the $\mathbb{F}$ system yields Figure \ref{53loops}. All but two systems are distinguishable by degree considerations. The systems on the bottom left and top right are equivalent since one is obtained under an inversion in the core circle of $\mathbb{A}$. Thus, we have 3 distinct 1-systems corresponding to $\mathbb{F}$. 
\begin{figure}[htp]
    \centering
    \includegraphics[width=7cm]{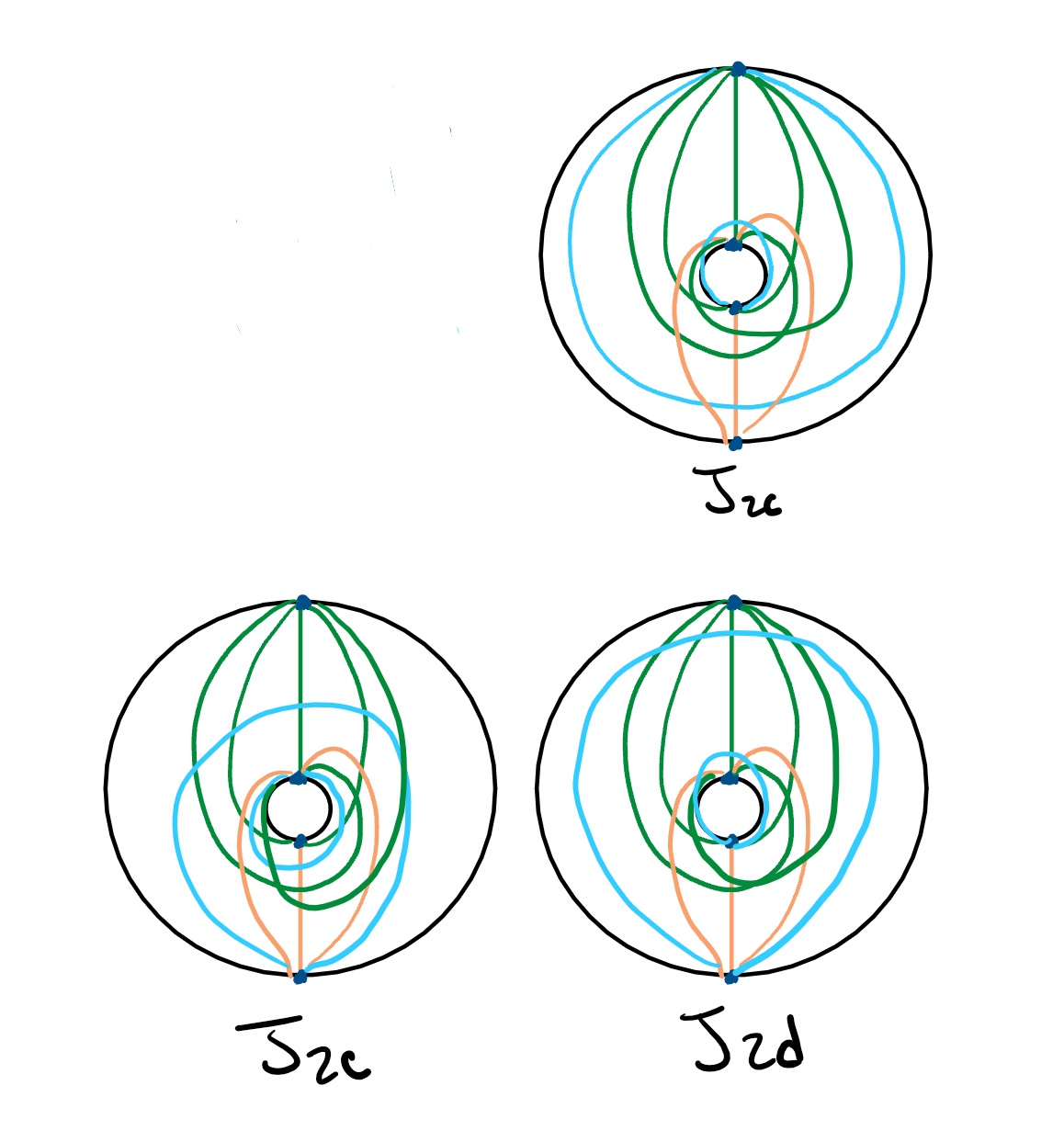}
    \caption{$J_{2c}, J_{2d}$ extended from $\mathbb{F}$}
    \label{53loops}
\end{figure}

Adding loops to the $\mathbb{G}$ system yields Figure \ref{44loops}. Note we cut down on half the cases using symmetry. Furthermore, the half Dehn-twist around the inside boundary circle of $\mathbb{A}$ maps the left figure to the right one reflected in the vertical symmetry axis of $\mathbb{A}$. This yields the unique maximal 1-system corresponding to~$\mathbb{G}$.
\begin{figure}[htp]
    \centering
    \includegraphics[width=6cm]{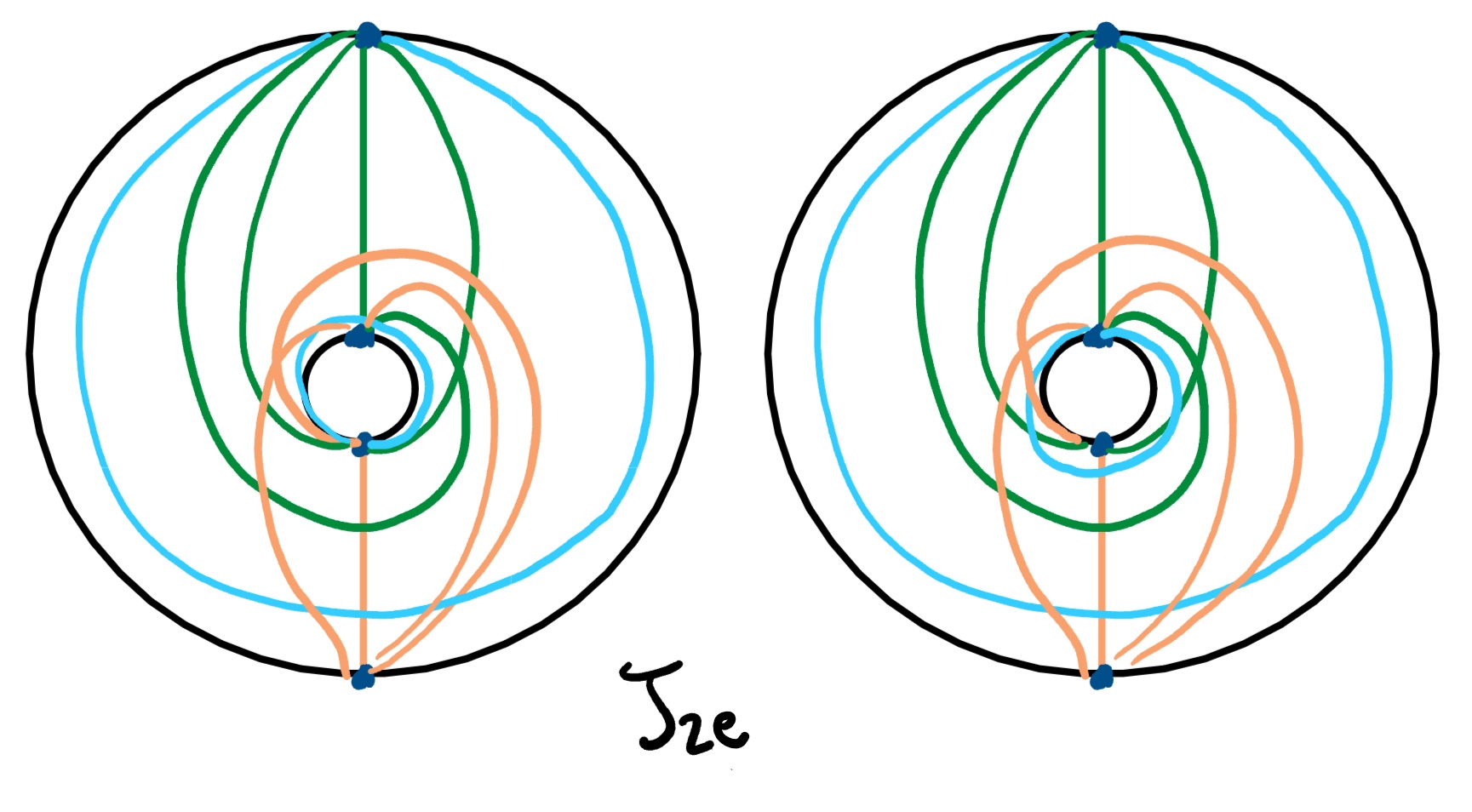}
    \caption{$J_{2e}$ extended from $\mathbb{G}$}
    \label{44loops}
\end{figure}

\subsection{\texorpdfstring{$|\mathbb{J}|=1$}{Lg}}\label{1cut}
\begin{prop}\label{clasisfy1}
There is one maximal 1-system on $\Sigma$ for $|\mathbb{J}|=1$, shown in Figure \ref{fig:ad}.
\end{prop}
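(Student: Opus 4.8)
The plan is to mirror the strategy already used for the $|\mathbb{J}|=3$ and $|\mathbb{J}|=2$ cases: first pin down the possible shapes of $\Gamma_{\mathbb{J}}$, then cut $\Sigma$ along $\mathbb{J}$ and count how many arcs the resulting surface can carry, using Lemma \ref{tech} and the twisting-number bookkeeping of Remark \ref{rem}. Since $|\mathbb{J}|=1$, the subset $\mathbb{J}$ consists of a single arc $\sigma$, and by Proposition \ref{nonsep} this arc must be nonseparating. A single arc is nonseparating precisely when it is not a loop (a loop bounds a once-punctured disk on one side by the essentiality hypothesis) and when cutting along it keeps $\Sigma$ connected, so $\sigma$ joins two distinct punctures, say $a$ and $d$. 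First I would record that, up to a self-homeomorphism of $\Sigma$, there is only one such nonseparating arc between two fixed punctures, and that cutting $\Sigma=\overline{\Sigma}$ along $\sigma$ yields a once-punctured square $\mathcal{D}_4$ (two copies of $\sigma$ become two of the four sides, and the remaining two punctures $b,c$ together with the copies of $a,d$ furnish the four corners).

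The heart of the argument is then a counting step. Since $|\mathbb{A}|=12$ and $\sigma$ is the only disjoint arc, the remaining eleven arcs must all live in the cut-open surface and each must intersect at least one other arc; moreover every one of them crosses $\sigma$ or reuses its endpoints, so they descend to arcs on $\mathcal{D}_4$. By Lemma \ref{tech}, $\mathcal{D}_4$ supports at most ten arcs in a maximal $1$-system, so naively we get at most $10+1=11$ arcs, one short of $12$. The resolution is that arcs crossing $\sigma$ may cross it, and glue up, in more than one way, so the count on $\Sigma$ is strictly larger than the count on the cut surface; I would make this precise by classifying arcs according to how many times they meet $\sigma$ and tracking the gluing, exactly as the proof of Proposition \ref{nonsep} reduces to Remark \ref{rem}. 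Concretely I would separate arcs into those disjoint from $\sigma$ (contributing to a sub-$1$-system of each side), those meeting $\sigma$ once, and possible loops around $b$ or $c$, and show the constraints of Lemma \ref{tech} together with the adjacency restriction (at most two adjacent arcs sharing an endpoint) force a unique saturated configuration achieving $|\mathbb{A}|=12$.

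Having produced at least one configuration of the right cardinality, the classification step is to show it is unique up to homeomorphism and equivalence. Here I would invoke the symmetry group of the pair $(\Sigma,\sigma)$: the stabilizer of a nonseparating arc between $a$ and $d$ acts on the cut-open square, and any two maximal completions differing by this action are equivalent. Degree considerations, as used repeatedly in Section \ref{2cut} (computing $\deg(\mathbb{A})$ at each puncture), should distinguish this system from every $|\mathbb{J}|\ne 1$ system and confirm there is no second inequivalent completion. I expect the main obstacle to be the off-by-one bookkeeping in the counting step: showing that the gluing genuinely recovers the missing twelfth arc while simultaneously proving no \emph{eleventh} disjoint arc can sneak in (which would contradict $|\mathbb{J}|=1$). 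Getting both the upper bound $|\mathbb{A}|\le 12$ sharp and the lower bound realized by a single explicit picture, with all homotopy and one-intersection constraints verified, is where the real care is required.
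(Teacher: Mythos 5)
There is a genuine gap, and it sits exactly at the heart of your counting step: the cut surface is misidentified. Cutting $\Sigma$ along a single arc $\sigma$ between $a$ and $d$ does not produce a once-punctured square $\mathcal{D}_4$; it produces a bigon with vertices $a,d$ on the boundary and \emph{two} punctures $b,c$ in the interior (the paper's ``twice punctured disk''). The once-punctured square only arises in the $|\mathbb{J}|=2$ case with connected $\Gamma_\mathbb{J}$, where one cuts along a path through three punctures. Consequently Lemma \ref{tech} simply does not apply, and your bound of ``$10+1=11$'' arcs has no basis. Worse, your proposed resolution of the off-by-one problem contradicts the definition of $\mathbb{J}$: since $\sigma\in\mathbb{J}$, every other arc of $\mathbb{A}$ is \emph{disjoint} from $\sigma$, so no arc ``crosses $\sigma$,'' all eleven remaining arcs descend honestly to the cut surface, and there is no extra arc to be recovered by gluing. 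The actual difficulty is the opposite of a finite undercount: because the cut surface has two interior punctures, there are \emph{infinitely many} homotopy classes of arcs between $a$ and $d$, obtained from one another by half-Dehn twists about a circle enclosing $b$ and $c$ (Remark \ref{halfdehntwists}), and your proposal contains no mechanism to control them.

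This is precisely why the paper develops new machinery for this case rather than reusing Lemma \ref{tech}: it introduces the invariant $\tau^*(\sigma)=\frac{1}{4}\sum_i\tau(\gamma_i)$ and proves the intersection formulas $|\sigma_1\cap\sigma_2|=2|\tau^*(\sigma_1)-\tau^*(\sigma_2)|+1$ and $|\sigma\cap\gamma|=2|\tau^*(\sigma)-\tau(\gamma)|-\frac{1}{2}$ (Lemma \ref{adbc}). The first formula shows any two distinct $a$--$d$ arcs in the cut surface intersect, giving at most two of them (Corollary \ref{adad}), and the case analysis then runs on that number: with two such arcs the permissible twisting numbers $\tau=0,\frac{1}{2},1$ cap the system at $11<12$ arcs; with exactly one, the permissible values $\tau=0,\pm\frac{1}{2},1$ yield exactly $12$ and force the configuration of Figure \ref{fig:ad}; with none, the unique $b$--$c$ class $\phi$ would lie in $\mathbb{J}$, contradicting $|\mathbb{J}|=1$. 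Your uniqueness step (``symmetry of the pair $(\Sigma,\sigma)$ plus degree considerations'') is also too vague to substitute for this trichotomy, since the competing configurations that must be excluded are not distinguished by a symmetry argument but ruled out by the arithmetic of $\tau$ and $\tau^*$. To repair the proposal you would need to abandon the $\mathcal{D}_4$ reduction entirely and either reconstruct this twisting-number analysis or supply an equivalent classification of arcs in a twice-punctured bigon.
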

Without loss of generality, let $\mathbb{J}$ be an arc between punctures $a,d$, so $\Sigma-\Gamma_\mathbb{J}$ is a twice punctured disk. We begin by discussing arcs which are not loops.

We begin by defining an invariant on homotopy classes of arcs between $a,d$ which we call $\tau^*$, and investigate how this relates to $\tau$. We state a characterization of arcs between $a,d$, following the classification of arcs on a three-punctured sphere.

\begin{remark}\label{halfdehntwists}
Let $\sigma$ be an arc between $a,d$, and $\mathcal{L}\subset \Sigma-\Gamma_{\mathbb{J}}$ a small disk surrounding the two punctures. Then, every homotopy class of arcs between $a,d$ is obtained by half-Dehn twists of $\sigma$ about $\partial \mathcal{L}$.
\end{remark}

\begin{figure}[htp]
    \centering
    \includegraphics[width=8cm]{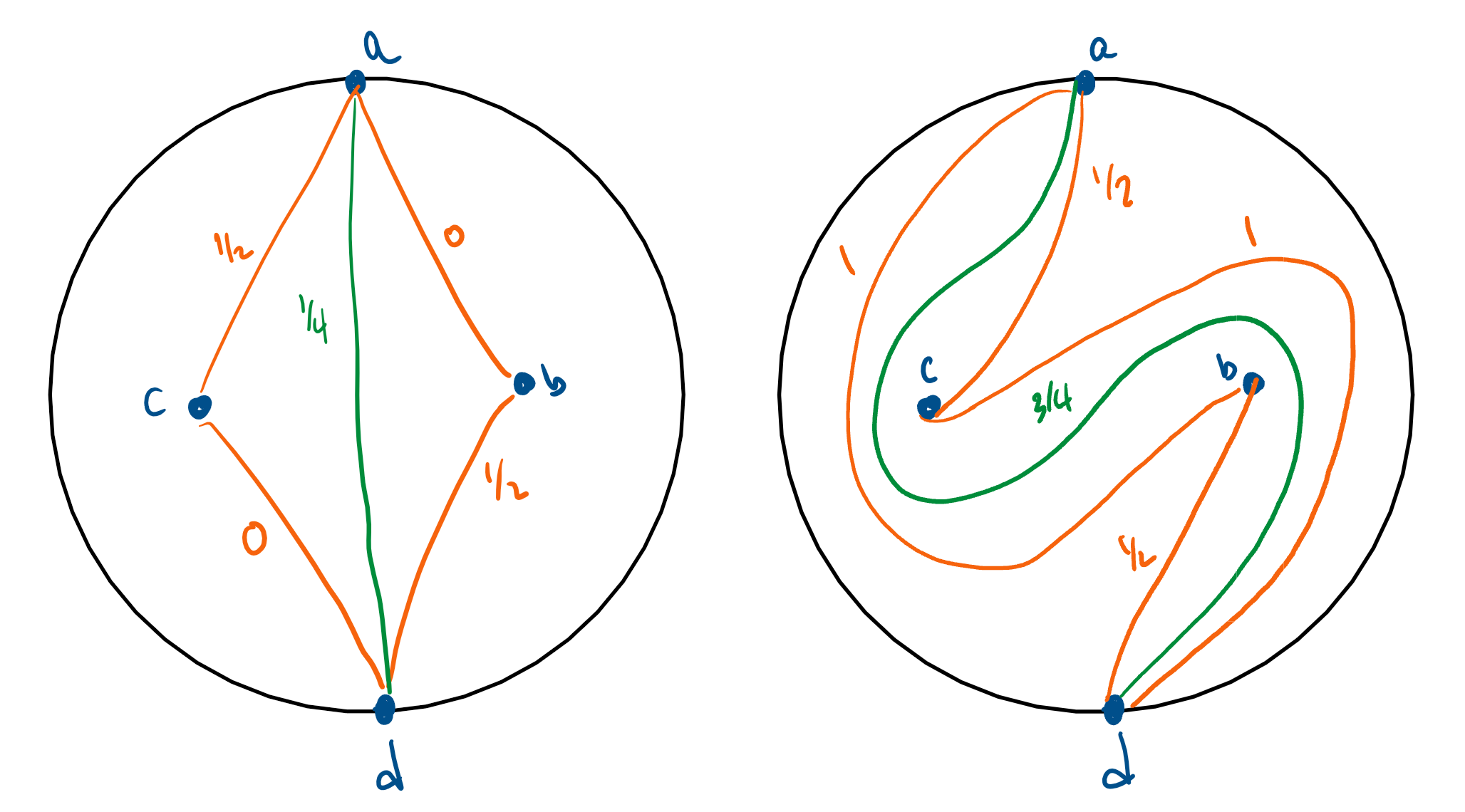}
    \caption{Examples of $\tau^*=\frac{1}{4}, \frac{3}{4}$}
    \label{halfdt}
\end{figure}

Observe each $\sigma$ between $a,d$ divides the $\Sigma-\Gamma_\mathbb{J}$ into two once-punctured bigons. Thus, we get four unique (homotopy classes of) arcs $\gamma_i$, where each arc has one endpoint in $\{a,d\}$ and the other in $\{b,c\}$, and is disjoint from $\sigma$. Figure \ref{halfdt} illustrates this where the $\sigma$'s are the green arcs, and the $\gamma_i$'s are the orange ones. Since there is a unique homotopy class of arcs between $b,c$, denoted $\phi$, every arc with an endpoint in $\{a,d\}$ and the other in $\{b,c\}$ can be characterized by its twisting number $\tau$ in $(\Sigma-\Gamma_{\mathcal{J}})-\phi$. This accounts for the orange numbers. Finally, we define $$\tau^*(\sigma)=\frac{1}{4}\sum_i \tau(\gamma_i)$$
as shown by the green numbers. Note that the image of $\tau^*$ is $\frac{1}{2}\Z+\frac{1}{4}$. In the same spirit as Lemma \ref{mainth}, by induction $\tau$ and $\tau^*$ completely determine the number of intersections of two arcs in minimal position. 

\begin{lemma}
If $\sigma_1,\sigma_2$ are two arcs between $a,d$, then
$$|\sigma_1\cap \sigma_2|=2|\tau^*(\sigma_1)-\tau^*(\sigma_2)|+1$$
\end{lemma}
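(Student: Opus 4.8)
The plan is to prove the formula by induction on $n:=2\lvert \tau^*(\sigma_1)-\tau^*(\sigma_2)\rvert\in\Z_{>0}$, in exactly the spirit of Lemma \ref{mainth}, with the half-Dehn twist $h$ about $\partial\mathcal{L}$ playing the role that the Dehn twist about $S^1\times\{\tfrac12\}$ played for $\tau$. First I would record two structural facts about $\tau^*$. By Remark \ref{halfdehntwists} every class of arc between $a,d$ is $h^k(\sigma)$ for a fixed base arc $\sigma$ and some $k\in\Z$, so it suffices to understand one $h$-orbit. I would then check that $h$ shifts $\tau^*$ by a fixed half-integer, $\tau^*(h(\sigma))=\tau^*(\sigma)+\tfrac12$; the cleanest route is to track the four disjoint arcs $\gamma_i$ associated to $\sigma$, observe that $h(\gamma_i)$ are the arcs associated to $h(\sigma)$ (as $h$ is a homeomorphism fixing the class of $\phi$), and compute the effect of $h$ on each $\tau(\gamma_i)$ inside $\Omega$ using Lemma \ref{mainth}: each value moves by $\tfrac12$, so the sum moves by $2$ and $\tau^*$ by $\tfrac12$. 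Since the values $\tfrac14+\tfrac{k}{2}$ are pairwise distinct, $\tau^*$ is injective on classes of arcs between $a,d$; consequently a pair with $2\lvert\tau^*(\sigma_1)-\tau^*(\sigma_2)\rvert=n$ is, after applying a power of $h$ to both arcs (which changes neither $\lvert\sigma_1\cap\sigma_2\rvert$ nor the difference of twisting numbers), the pair $\sigma_2,\,h^n(\sigma_2)$.

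For the base case $n=1$ the two arcs differ by a single half-twist. Two distinct arcs between $a,d$ each separate $b$ from $c$, since each cuts $\Sigma-\Gamma_{\mathbb{J}}$ into two once-punctured bigons, so they cannot be made disjoint; a direct inspection of the adjacent classes $\tau^*=\tfrac14$ and $\tau^*=\tfrac34$ drawn in Figure \ref{halfdt} shows they meet in exactly two points, matching $2\cdot\tfrac12+1=2$. I would confirm minimality here by the bigon criterion for arcs: there is no bigon or half-bigon, so both crossings are essential.

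For the inductive step, applying $h^{-1}$ to the pair $\sigma_2,\,h^n(\sigma_2)$ gives $\lvert h^n(\sigma_2)\cap\sigma_2\rvert=\lvert h^{n-1}(\sigma_2)\cap h^{-1}(\sigma_2)\rvert$, so it is enough to compare the intersections of the fixed arc $h^{n-1}(\sigma_2)$ with the two adjacent arcs $\sigma_2$ and $h^{-1}(\sigma_2)$. The geometric content is that one additional half-twist drags the arc once more around the twice-punctured disc $\mathcal{L}$, creating exactly one new transverse crossing with any fixed arc passing through $\mathcal{L}$ and destroying none; this gives $\lvert h^{n-1}(\sigma_2)\cap\sigma_2\rvert+1$ and closes the induction at $n+1$. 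I would make the count precise by passing to the annular cover of $\Sigma-\Gamma_{\mathbb{J}}$ associated to $\partial\mathcal{L}$ (equivalently, lifting to the universal cover), where the half-twists become translations and the crossings can be enumerated linearly.

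The main obstacle is the claim that no crossing is ever lost, i.e. that the arcs remain in minimal position after each half-twist, so that the linear count truly equals $\lvert\sigma_1\cap\sigma_2\rvert$. This is precisely where the bigon criterion must be invoked: a bigon or half-bigon between $h^k(\sigma_2)$ and $\sigma_2$ would produce, after pushing to an innermost position, a bigon between adjacent arcs, contradicting the base case; equivalently, the lifts to the cover bound no innermost bigon. An attractive alternative that sidesteps the half-integer bookkeeping is to take the double cover of $\Sigma-\Gamma_{\mathbb{J}}$ branched over $b$ and $c$, which is an annulus in which $h$ lifts to a genuine Dehn twist and $\tau^*$ lifts to an integral twisting number; there the formula follows from Lemma \ref{mainth} applied upstairs, once one accounts carefully for how the two lifts of each $\sigma_i$ pair up and for the factor of two in intersection numbers away from the branch points.
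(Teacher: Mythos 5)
Your overall framework is exactly what the paper intends (the paper gives no written proof beyond ``by induction, in the same spirit as Lemma~\ref{mainth}''), and several of your reductions are correct: by Remark~\ref{halfdehntwists} the homotopy classes of arcs between $a,d$ form a single orbit of the half-twist $h$; tracking the quadruple $\gamma_i$ shows $\tau^*(h(\sigma))=\tau^*(\sigma)+\tfrac12$; hence any pair is, up to a power of $h$, of the form $\bigl(\sigma, h^n(\sigma)\bigr)$ with $n=2\lvert\tau^*(\sigma_1)-\tau^*(\sigma_2)\rvert$, and induction on $n$ is the right engine.

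Both of your quantitative steps, however, are wrong, and they compensate for each other to land on the printed formula. Base case: $\sigma$ and $h(\sigma)$ meet exactly \emph{once}, not twice. The arc $h(\sigma)$ is the S-shaped arc leaving $a$ on one side of $c$ and arriving at $d$ on the other side of $b$; it crosses $\sigma$ in a single point, a single crossing admits no bigon, and each candidate half-bigon (from the crossing to $a$, respectively to $d$) contains the puncture $c$, respectively $b$, so one crossing is minimal. The paper's own text demands this: Corollary~\ref{adad} and Figure~\ref{fig:adad} place two arcs with $\tau^*=\tfrac14,\tfrac34$ inside one $1$-system, which is impossible if adjacent classes met twice. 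Inductive step: each further half-twist adds \emph{two} new crossings, not one, because the increment is governed by $\lvert\sigma\cap\partial\mathcal{L}\rvert=2$; in your own suggested double cover branched over $b,c$ (where $h$ lifts to a Dehn twist), each of the two lifts of the moving arc gains one crossing, and both descend. The true count is therefore $\lvert\sigma\cap h^n(\sigma)\rvert=2n-1$, i.e.\ $\lvert\sigma_1\cap\sigma_2\rvert=4\lvert\tau^*(\sigma_1)-\tau^*(\sigma_2)\rvert-1$; one can verify this in the torus double cover of $\Sigma$, where these classes are the primitive vectors $(2k+1,2k-1)$ and the arc intersection number is $\tfrac12\lvert\det\rvert-1$. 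In particular the statement as printed (with $2\lvert\Delta\tau^*\rvert+1$) is itself inconsistent with Corollary~\ref{adad} --- it would force any two distinct arcs between $a,d$ to meet at least twice, allowing only one such arc in a $1$-system --- and the two formulas agree only at $\lvert\Delta\tau^*\rvert=1$. With the corrected base case ($1$ crossing) and increment ($+2$ per half-twist), your induction scheme goes through verbatim and proves the corrected formula, which still yields Corollary~\ref{adad}: $4\lvert\Delta\tau^*\rvert-1\le 1$ holds exactly for $\lvert\Delta\tau^*\rvert\le\tfrac12$, permitting precisely two classes.
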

We immediately have as a corollary an upper bound on the number of arcs between $a,d$.
\begin{corollary}\label{adad}
In any 1-system on $\Sigma-\Gamma_{\mathbb{J}}$, there at at most two arcs between $a,d$. 
\end{corollary}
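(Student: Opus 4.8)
The plan is to read the bound directly off the invariant $\tau^*$ under the $1$-system constraint, being careful about the two shared endpoints. First I would record the discreteness: by Remark \ref{halfdehntwists} every arc between $a,d$ is a half-Dehn twist of a fixed $\sigma$ about $\partial\mathcal{L}$, and $\tau^*$ separates these homotopy classes, so distinct arcs have distinct values of $\tau^*$. Since $\tau^*$ takes values in $\frac12\Z+\frac14$, the values of any two distinct arcs differ by a positive multiple of $\frac12$. Hence for three distinct arcs, ordered so that $\tau^*(\sigma_1)<\tau^*(\sigma_2)<\tau^*(\sigma_3)$, the outer pair satisfies $\tau^*(\sigma_3)-\tau^*(\sigma_1)\ge 1$.

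The essential geometric input is the exact interior intersection count for arcs that are adjacent in $\tau^*$. Here the naive twisting count must be corrected for the fact that the two arcs share both endpoints $a$ and $d$, in exactly the manner the term $-\tfrac{\eta}{2}$ corrects the count in Lemma \ref{mainth}; with two shared endpoints this is precisely what makes two arcs at minimal $\tau^*$-distance ($|\tau^*(\sigma_1)-\tau^*(\sigma_2)|=\frac12$) meet once rather than twice. I would verify this in the model of Remark \ref{halfdehntwists}: outside the small disk $\mathcal{L}$ around $b,c$ the two arcs coincide, so every interior intersection is trapped inside $\mathcal{L}$, and applying a single half-Dehn twist to the separating arc $\sigma\cap\mathcal{L}$ produces exactly one transverse crossing. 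Thus two arcs at minimal $\tau^*$-distance genuinely form a $1$-system, which is the reason the bound is two and not one.

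With this in hand the counting is immediate. Each successive half-twist localized in $\mathcal{L}$ adds one interior crossing, so two arcs whose $\tau^*$-values differ by at least $1$ meet at least twice and therefore cannot both belong to a $1$-system. Applying this to the outer pair $\sigma_1,\sigma_3$ of any three candidate arcs gives $|\sigma_1\cap\sigma_3|\ge 2$, contradicting the $1$-system hypothesis. Hence at most two arcs between $a,d$ occur, and since two adjacent arcs realize the bound, the estimate is sharp.

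The step I expect to be the main obstacle is pinning down that the minimal $\tau^*$-gap yields exactly one crossing and not two. This is the endpoint correction that separates the true bound "at most two" from the stronger-but-false "at most one," so it must be argued honestly in the half-twist model inside $\mathcal{L}$ rather than read off the twisting count in isolation; once the adjacent case is nailed down, the ordering argument for three arcs is routine.
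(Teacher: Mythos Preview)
Your approach is essentially the paper's own: the corollary is recorded there as an immediate consequence of the preceding intersection formula $|\sigma_1\cap\sigma_2|=2|\tau^*(\sigma_1)-\tau^*(\sigma_2)|+1$ together with the discreteness of the image of $\tau^*$ in $\frac12\Z+\frac14$. You recover the same deduction, except that instead of citing the formula you rederive the needed intersection counts directly from the half-twist model of Remark~\ref{halfdehntwists} and the endpoint correction; the ordering argument for three arcs is then identical.
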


\begin{lemma}\label{adbc}
If $\sigma$ is an arc between $a,d$ and $\gamma$ is an arc with one endpoint in $\{a,d\}$ and the other in $\{b,c\}$, then 
$$|\sigma\cap \gamma|=2|\tau^*(\sigma)-\tau(\gamma)|-\frac{1}{2}$$
\end{lemma}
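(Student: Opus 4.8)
The plan is to mimic the inductive proof of Lemma \ref{mainth}, now juggling the two twisting invariants at once. Both arcs are generated by twisting about $\partial\mathcal{L}$, where $\mathcal{L}$ is the small disk around $b,c$ from Remark \ref{halfdehntwists}: the arc $\sigma$ between $a,d$ by half-Dehn twists (Remark \ref{halfdehntwists}), and the arc $\gamma$ between $\{a,d\}$ and $\{b,c\}$ by ordinary Dehn twists inside the annulus $(\Sigma-\Gamma_{\mathbb{J}})-\phi \cong \Omega$. The single geometric input I would isolate first is the count of each arc against the curve $\partial\mathcal{L}$: since $\sigma$ separates $b$ from $c$ it meets $\partial\mathcal{L}$ twice, whereas $\gamma$ terminates at one puncture inside $\mathcal{L}$ and so meets $\partial\mathcal{L}$ once. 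These two numbers, $2$ and $1$, are exactly what will produce the coefficient $2$ in the formula.

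For the base case I would take disjointness. Recall $\sigma$ cuts $\Sigma-\Gamma_{\mathbb{J}}$ into two once-punctured bigons and so comes with four associated arcs $\gamma_1,\dots,\gamma_4$, one from each of $a,d$ to each of $b,c$, all disjoint from $\sigma$. Writing $t=\tau^*(\sigma)=\tfrac14\sum_i\tau(\gamma_i)$, I would check directly from the construction (cf. Figure \ref{halfdt}) that the four values split two-and-two by endpoint type: the arcs of type $ab,dc$ carry integer $\tau$ and the arcs of type $ac,db$ carry half-integer $\tau$, while the pair $\{t-\tfrac14,t+\tfrac14\}$ consists of exactly one integer and one half-integer. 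Hence each $\tau(\gamma_i)\in\{t-\tfrac14,t+\tfrac14\}$ with $|t-\tau(\gamma_i)|=\tfrac14$, and these values indeed average to $t$. The claimed formula then gives $2\cdot\tfrac14-\tfrac12=0$, matching $|\sigma\cap\gamma_i|=0$, which pins the formula down at its minimum.

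The inductive step uses two independent moves about $\partial\mathcal{L}$. First, fixing $\gamma$ and applying a half-Dehn twist to $\sigma$ changes $\tau^*(\sigma)$ by $\tfrac12$; because $\gamma$ meets $\partial\mathcal{L}$ once, this introduces one new transverse intersection with $\gamma$, and I would confirm via the bigon criterion for arcs that no bigon or half-bigon appears, so $|\sigma\cap\gamma|$ changes by $1=2\cdot\tfrac12$. Second, fixing $\sigma$ and applying a full Dehn twist to $\gamma$ changes $\tau(\gamma)$ by $1$; because $\sigma$ meets $\partial\mathcal{L}$ twice, this changes $|\sigma\cap\gamma|$ by $2=2\cdot 1$, again checked to be in minimal position. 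Since every pair $(\sigma,\gamma)$ is reached from the disjoint base case by composing these two moves, and both sides of the identity transform in lockstep, the formula propagates to all pairs.

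The hard part will be the bookkeeping that reconciles the two normalizations --- a half-twist increment for $\tau^*$ against a full-twist increment for $\tau$ --- and in particular verifying that the four associated values $\tau(\gamma_i)$ really split as a balanced pair $\{t\pm\tfrac14\}$ rather than clustering; this balanced split is precisely what forces the offset $-\tfrac12$ alongside the coefficient $2$. The remaining care is routine: for each elementary twist one must rule out a bigon or half-bigon so that the counted intersection is genuinely realized in minimal position, which follows directly from the bigon criterion for arcs.
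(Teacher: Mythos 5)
Your overall architecture is the one the paper intends --- the paper offers no written proof of this lemma, only the assertion that it follows ``by induction, in the same spirit as Lemma~\ref{mainth}'' --- and your base case is sound: pairwise disjointness of the four arcs $\gamma_i$ together with Lemma~\ref{mainth} forces $\tau(\gamma_{ab})=\tau(\gamma_{dc})$ and $\tau(\gamma_{ac})=\tau(\gamma_{db})$, with the two common values differing by $\tfrac12$ (shared endpoint, disjoint), so the four values are exactly the balanced pair $\{t-\tfrac14,\,t+\tfrac14\}$ as you predicted. The genuine gap is in your inductive step: the increments $+1$ (half twist of $\sigma$) and $+2$ (full twist of $\gamma$) hold only when twisting \emph{away} from minimal position, and your blanket claim that ``no bigon or half-bigon appears'' is false at twists crossing the minimum. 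Concretely, the two arcs $\sigma$ with $\tau^*(\sigma)=\tau(\gamma)\pm\tfrac14$ are \emph{both} disjoint from $\gamma$ and differ by a single half twist, so there the change in $|\sigma\cap\gamma|$ is $0$, and a half-bigon at the shared endpoint does appear. Likewise, a full twist of a disjoint $\gamma$ changes $|\tau^*(\sigma)-\tau(\gamma)|$ from $\tfrac14$ to either $\tfrac34$ or $\tfrac54$, i.e.\ the intersection number becomes $1$ in one direction and $2$ in the other; in the direction giving $1$, one of the two new crossings cancels across a half-bigon at the common endpoint. Propagating your constant increments blindly even yields path-dependent, contradictory answers: the pair at distance $\tfrac34$ gets value $1$ via one half twist from the base at $\tau(\gamma)+\tfrac14$, but value $2$ via two half twists from the base at $\tau(\gamma)-\tfrac14$. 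The absolute value in the statement is precisely the failure of affineness at the minimum, so no direction-blind ``lockstep'' argument can be correct.

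The repair is straightforward but must be stated. For a fixed $\gamma$, first deduce from your base case (plus injectivity of $\tau$ on each $X_x^y$, which follows from the $\Omega$ discussion) that $\gamma$ is disjoint from the arcs $\sigma$ with $\tau^*(\sigma)=\tau(\gamma)\pm\tfrac14$. Then induct on $k\geq 0$, where $|\tau^*(\sigma)-\tau(\gamma)|=\tfrac14+\tfrac{k}{2}$, starting from the disjoint base on the \emph{same side} as the target and applying only the half twist that moves $\tau^*$ away from $\tau(\gamma)$; Remark~\ref{halfdehntwists} guarantees every target is reached this way, the bigon/half-bigon check now genuinely succeeds at each step (the half-bigon obstruction arises only when twisting toward $\tau(\gamma)$), and your move (2), the full twist of $\gamma$, becomes superfluous.
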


We classify maximal 1-systems by cases using Corollary \ref{adad}. Suppose we had two arcs $\sigma_1,\sigma_2$ between $a,d$. Up to a homeomorphism, we can assume $\tau^*(\sigma_1)=\frac{1}{4}$ and $\tau^*(\sigma_2)=\frac{3}{4}$. By Lemma \ref{adbc}, the permissible values of $\tau$ are $\tau=0,\frac{1}{2}, 1$. We illustrate this in Figure \ref{fig:adad}.

\begin{figure}[htp]
    \centering
    \includegraphics[width=3cm]{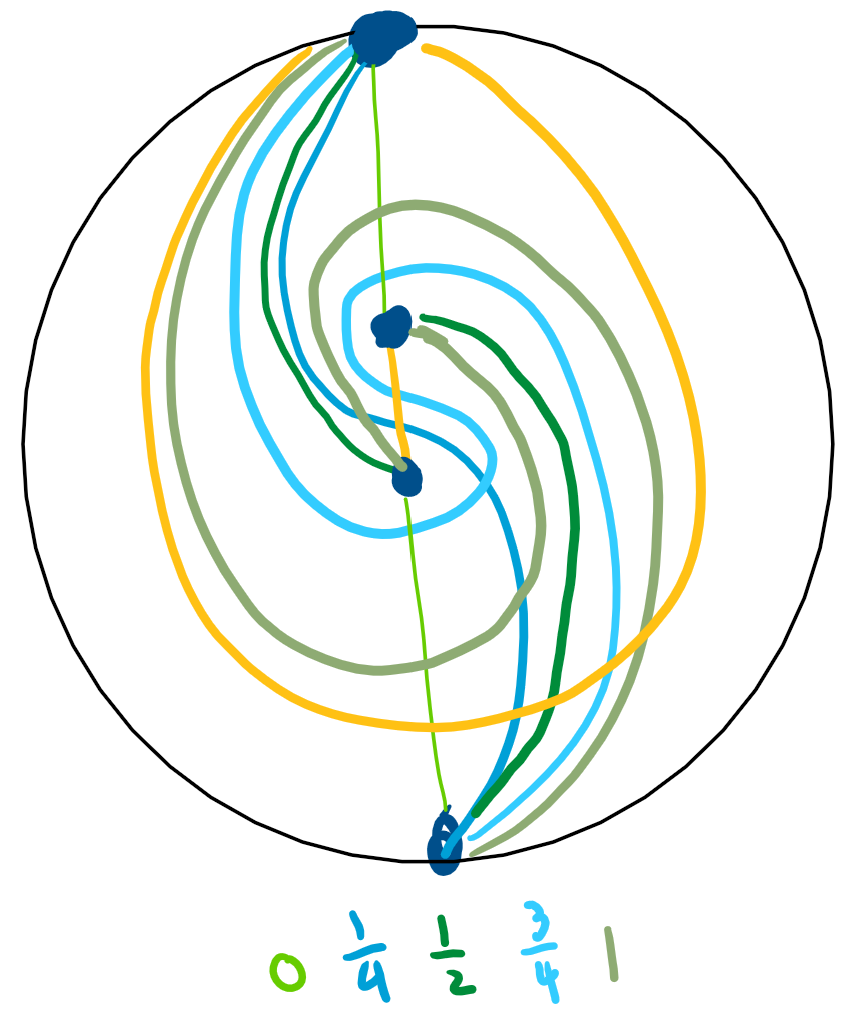}
    \caption{All permissible arcs given $\sigma_1,\sigma_2$}
    \label{fig:adad}
\end{figure}

This gives a maximum of: 2 arcs between $a,d$; 6 arcs between $\{a,d\}$ and $\{b,c\}$; $1$ arc between $b,c$; $1$ arc in $\mathbb{J}$. As we can have at most $1$ loop, this brings us to a maximum of 11 arcs. Thus, no such maximal 1-system can exist. 

Now assume there is one arc $\sigma$ between $a,d$. Up to a homeomorphism, we can assume $\tau^*(\sigma)=\frac{1}{4}$. By Lemma \ref{adbc}, the permissible values of $\tau$ are $\tau=0, \pm \frac{1}{2}, 1$. We illustrate this in Figure \ref{fig:ad}.

\begin{figure}[htp]
    \centering
    \includegraphics[width=3.5cm]{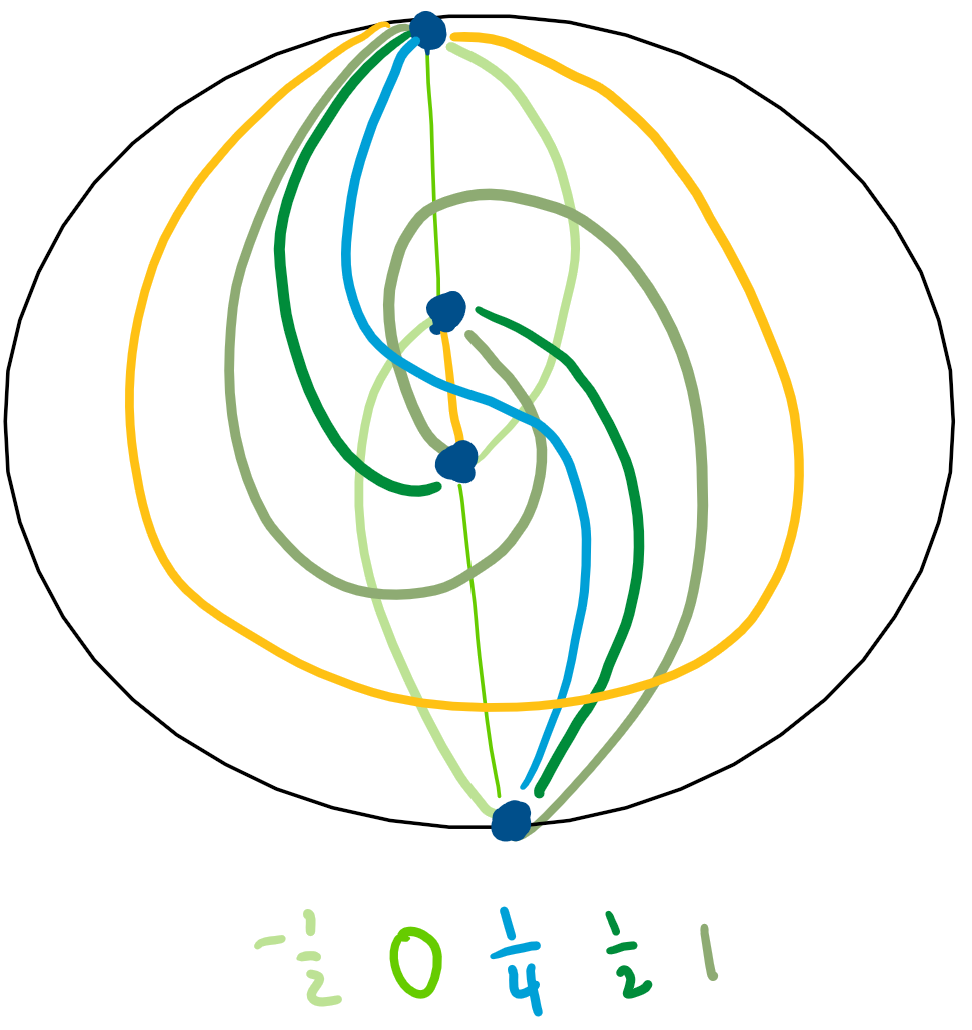}
    \caption{$J_1$ given $\sigma$ between $a,d$}
    \label{fig:ad}
\end{figure}

This gives a maximum of: 1 arc between $a,d$; 8 arcs between $\{a,d\}$ and $\{b,c\}$; $1$ arc between $b,c$; $1$ arc in $\mathbb{J}$. As we can have at most $1$ loop, this brings us to a maximum of 12 arcs. Adding in any loop, we get the maximal 1-system equivalent to Figure \ref{fig:ad}. 

Now assume there are no arcs between $a,d$. Then $\phi\in \mathbb{J}$, contradicting $|\mathbb{J}|=1$. Thus, no such maximal 1-system can exist.

\subsection{\texorpdfstring{$|\mathbb{J}|=0$}{Lg}}\label{0cut}
\begin{prop}\label{classify0}
There is one maximal 1-system on $\Sigma$ for $|\mathbb{J}|=0$, shown in Figure \ref{J0}.
\end{prop}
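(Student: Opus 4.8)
The plan is to run the same cut-and-count strategy as in Sections \ref{2cut} and \ref{1cut}, but since there is now no nonempty $\mathbb{J}$ to cut along, I would first eliminate loops. \textbf{Step 1 (no loops).} Any essential simple loop $\ell$ based at a puncture $p$ is an embedded circle through $p$ in $\overline{\Sigma}=S^2$, so it bounds two disks; essentiality forces each disk to contain at least one of the three remaining punctures, and since $3=1+2$ one of these disks is a once-punctured disk $D_{pq}$. If $\ell\in\mathbb{A}$, Lemma \ref{loopisajail} would put $\sigma_{pq}\in\mathbb{J}$, contradicting $|\mathbb{J}|=0$. Hence every arc of $\mathbb{A}$ joins two distinct punctures, and $\sum_{p\in\mathcal{P}}\deg p = 2|\mathbb{A}| = 24$.

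\textbf{Step 2 (cut to a known subsurface and count).} Since there are no loops, I would fix a reference skeleton inside $\mathbb{A}$ and cut. The cleanest choice, when available, is a pair of disjoint arcs covering all four punctures (say one between $a,d$ and one between $b,c$), whose complement is the annulus $\Omega$ of Corollary \ref{corann}, where the loopless maximal $1$-systems are exactly $\mathbb{F}$ and $\mathbb{G}$ of Figure \ref{hehe}; otherwise I would cut along a single arc to the twice-punctured disk of Section \ref{1cut} and use the invariants $\tau,\tau^*$ there. The existence of such a skeleton needs a preliminary argument: twelve loopless arcs cannot be pairwise crossing, and a short analysis of the admissible degree sequences yields a disjoint pair. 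Unlike the $|\mathbb{J}|=2$ case, the cut arcs now lie in $\mathbb{A}-\mathbb{J}$, so the remaining ten arcs may cross them; after cutting, each crossing arc reappears in $\Omega$ (or in the disk) as one or two arcs incident to the boundary slits. I would classify these traces by their twisting data and apply Lemma \ref{mainth}, Remark \ref{rem}, and Corollary \ref{adad} to bound, type by type, how many arcs of each kind can coexist. The constraints $|\mathbb{A}|=12$ and that every arc is crossed (so none can be moved into $\mathbb{J}$) should leave a single distribution of arcs among the pairs, forcing the twisting numbers up to the symmetry group of the skeleton.

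\textbf{Step 3 (uniqueness and maximality).} I would then show that any two systems surviving Step 2 are interchanged by a self-homeomorphism of $\Sigma$ -- the puncture permutations preserving the skeleton together with (half-)Dehn twists, exactly as in Sections \ref{2cut} and \ref{1cut} -- so that a single equivalence class remains. Finally I would verify directly that the system of Figure \ref{J0} is a $1$-system with $|\mathbb{J}|=0$ and $|\mathbb{A}|=12$, hence maximal.

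\textbf{Main obstacle.} The crux, and the point where this case genuinely differs from the others, is the absence of an uncrossed subsurface: in Sections \ref{3cut}--\ref{1cut} the arcs of $\mathbb{J}$ were disjoint from everything, so cutting left a clean polygon or annulus to which Lemma \ref{tech} and Corollary \ref{corann} applied verbatim. Here every candidate cutting arc is itself crossed, so the complementary pieces are not sub-$1$-systems of a polygon or annulus; the delicate part is the bookkeeping of how the crossing arcs re-enter the cut surface, and the verification that the surviving constraints still collapse the many a priori configurations down to the unique one of Figure \ref{J0}.
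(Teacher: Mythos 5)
Your Step 1 is fine and matches the paper's opening move: an essential based loop must bound a once-punctured disk on one side, so Lemma \ref{loopisajail} would force an arc into $\mathbb{J}$, contradicting $|\mathbb{J}|=0$. But from there your proposal has a genuine gap, and it is precisely the one you flag yourself without resolving. After Step 1, every candidate cutting arc lies in $\mathbb{A}-\mathbb{J}$ and is therefore crossed by other arcs of $\mathbb{A}$; cutting along it severs those arcs into fragments with endpoints on the boundary slits, and such fragments are not arcs between punctures or vertices of the cut surface. Lemma \ref{mainth}, Remark \ref{rem}, and Corollary \ref{adad} are statements about twisting numbers of arcs between the punctures/vertices of $\Omega$ or of the twice-punctured disk; none of them applies to boundary-to-boundary or boundary-to-puncture fragments, and you supply no substitute intersection calculus for them. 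Similarly, the existence of your skeleton (a disjoint pair of arcs, one between $a,d$ and one between $b,c$) is asserted via ``twelve loopless arcs cannot be pairwise crossing'' plus an unspecified degree-sequence analysis, but disjointness of \emph{some} two arcs does not produce a disjoint pair realizing a dual partition of the punctures, and no argument is given. So Steps 2 and 3 are a plan whose load-bearing computations are absent: the claim that the constraints ``should leave a single distribution'' is exactly the content of the proposition, not a proof of it.

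For comparison, the paper avoids cutting along arcs of $\mathbb{A}$ altogether. It partitions the loopless arcs by the three dual pairs $\mathcal{Q},\mathcal{Q}^*$ and counts: Lemma \ref{arcsbetween} caps each $\mathcal{Q}$ at three arcs, with a rigid configuration when three occur; since the twelve arcs distribute among three dual pairs, Lemma \ref{abcdefg} forces exactly four arcs per dual pair; Lemma \ref{no3} eliminates the $3{+}1$ split, and Lemma \ref{22looks} makes the $2{+}2$ split rigid as in Figure \ref{22hello}. This forces the eight-arc pyramid skeleton of Figure \ref{fourregions} to lie in $\mathbb{A}$, after which a short unrolling argument (Figure \ref{unroll}) determines the remaining four arcs uniquely up to homeomorphism, yielding $J_0$. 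If you wanted to rescue your route, the dual-pair counting is essentially what your ``analysis of admissible degree sequences'' would have to become --- and once you have it, the skeleton and its rigidity come for free, making the cut-surface bookkeeping unnecessary.
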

By Lemma \ref{loopisajail}, we observe that no loops can appear in the $|\mathbb{J}|=0$ case. Thus, any arcs mentioned will be understood to be arcs which are not loops. We begin by proving a slightly stronger statement of Theorem 1.7 of \cite{p2015} in the case of the four-punctured sphere. 

Let $\mathcal{P}$ denote the four punctures. Let $\mathcal{Q},\mathcal{Q}^*\subset \mathcal{P}$ each be a pair of distinct punctures such that $\mathcal{Q}\cap \mathcal{Q}^*=\emptyset$, and thus $\mathcal{Q}\cup \mathcal{Q}^*=\mathcal{P}$. We say $\mathcal{Q}$ and $\mathcal{Q}^*$ are a \textit{dual pair}. Note that $\mathcal{Q}$ completely determines $\mathcal{Q}^*$, and $(\mathcal{Q}^*)^*=\mathcal{Q}$

\begin{figure}[htp]
    \centering
    \includegraphics[width=3cm]{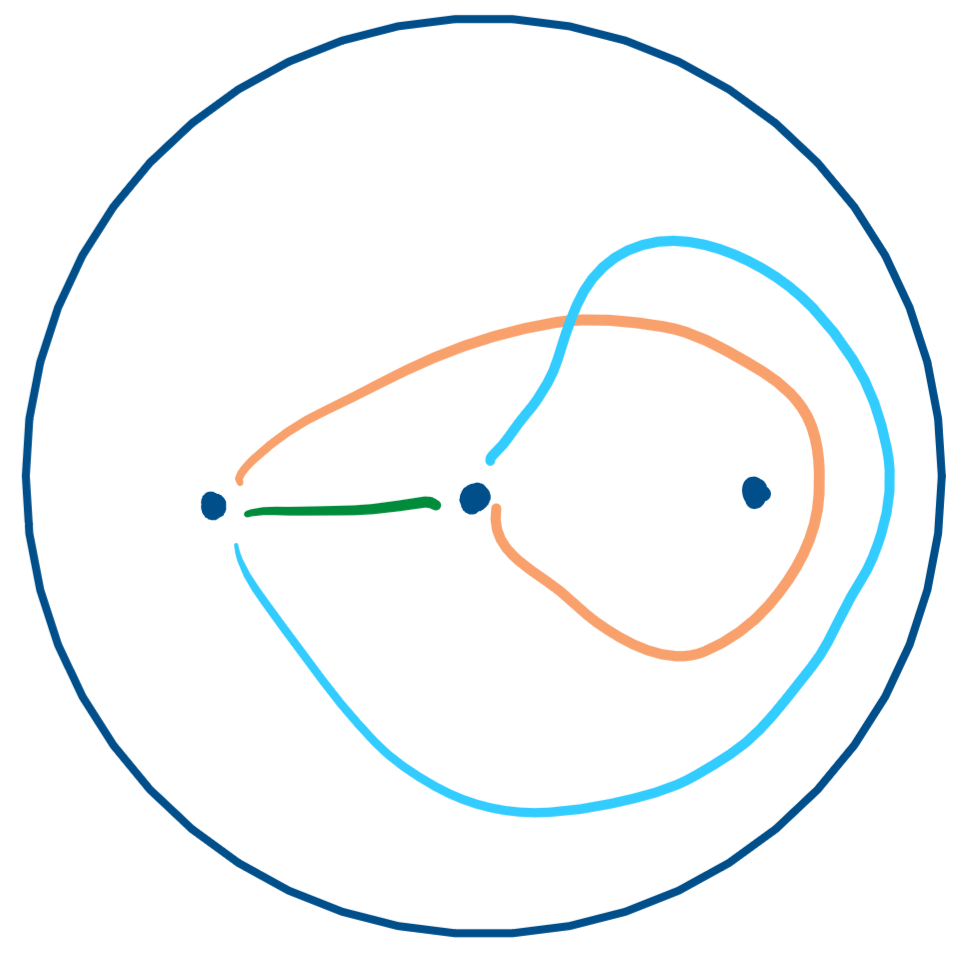}
    \caption{1-system of arcs between two punctures}
    \label{betweenpunc}
\end{figure}

\begin{lemma}\label{arcsbetween}
By \cite[Theorem 1.7]{p2015}, there are at most three arcs between $\mathcal{Q}$ on $\Sigma$. Furthermore, if three such arcs $\sigma_i$ are contained in a maximal 1-system $ \mathbb{A}$, then they are equivalent to Figure \ref{betweenpunc}, up to a homeomorphism of $\Sigma$.
\begin{proof}
It suffices to prove the second sentence. Up to relabelling, let us assume $\mathcal{Q}=\{a,b\}$. Assume by contradiction all $\sigma_i$ are disjoint. As the three arcs are nonhomotopic, 
$$\mathcal{Q}\cup \sigma_1\cup\sigma_2$$

bounds two once-punctured bigons, and
so it is homotopic to one of $\sigma_1,\sigma_2$, which is a contradiction. Then, as $\sigma_3$ is disjoint from both $\sigma_1,\sigma_2$, it must lie within one of the two bigons. But by Lemma \ref{tech}, this is impossible. 

\begin{figure}[htp]
    \centering
    \includegraphics[width=11cm]{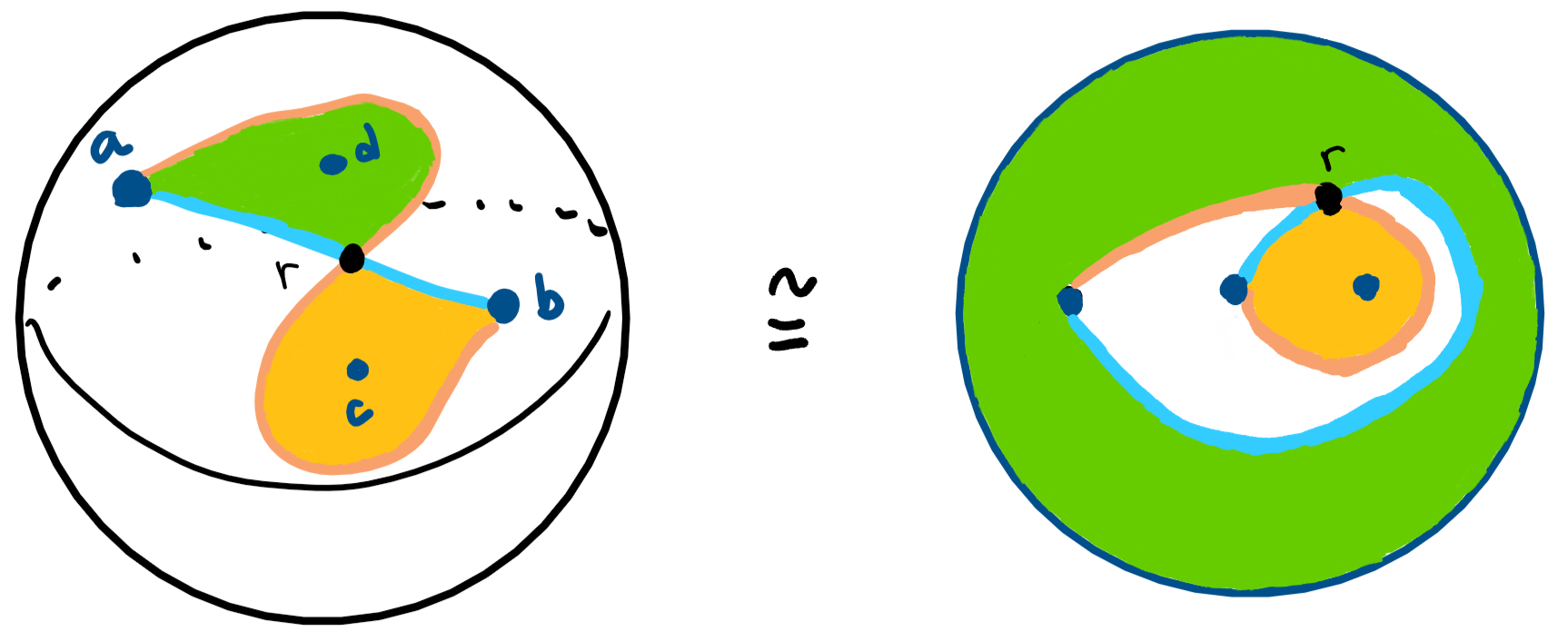}
    \caption{$\sigma_1$ (blue), $\sigma_2$ (orange) with intersection $r$}
    \label{sigmaqr}
\end{figure}

Let us assume $\sigma_1,\sigma_2$ intersect once, with intersection $r$. Up to a homeomorphism, the resulting configuration is illustrated in Figure \ref{sigmaqr},
where the colored regions must contain the remaining punctures by the bigon criterion. Thus, $\sigma_1,\sigma_2$ can be taken to be the orange and blue arcs on Figure \ref{betweenpunc}. 

Suppose by contradiction that $\sigma_3$ cannot be homotoped to the green arc in Figure \ref{betweenpunc}. Thus for some maximal 1-system $\mathbb{A}$ with $\sigma_1,\sigma_2\in \mathbb{A}$, we have some $\gamma\in \mathbb{A}$ with $1 < |\gamma\cap \sigma_3|$. See the first panel of Figure \ref{redarc} for this situation. 

\begin{figure}[htp]
    \centering
    \includegraphics[width= 8cm]{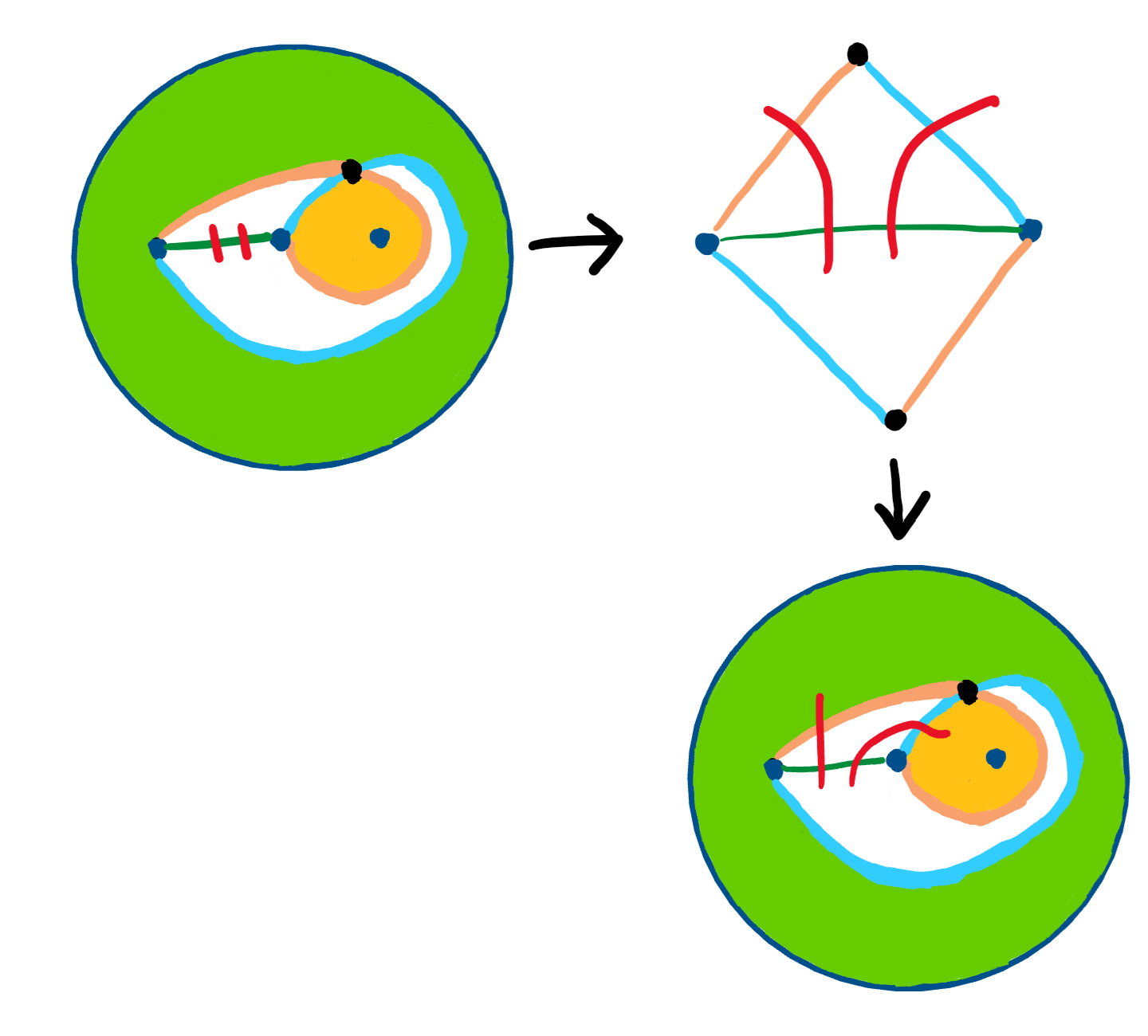}
    \caption{$\gamma$ in red intersecting $\sigma_3$ twice}
    \label{redarc}
\end{figure}

The top two strands of $\gamma$ must leave the white region. Thus, we have the second panel of Figure \ref{redarc}. However, at this stage we've intersected both $\sigma_1$ and $\sigma_2$ once. This implies the bottom strands must stay in the white region. But this implies $\gamma$ can be homotoped off $\sigma_3$, a contradiction.
\end{proof}
\end{lemma}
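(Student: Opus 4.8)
The plan is to fix a convenient dual pair, invoke the cardinality bound, and then classify by the intersection pattern of the three arcs after a normalization. Up to a self-homeomorphism of $\Sigma$ permuting the punctures I may take $\mathcal{Q}=\{a,b\}$, leaving $c,d$ as the other two. The first sentence is immediate from \cite[Theorem 1.7]{p2015}, so the real task is the structural claim: if a maximal $1$-system $\mathbb{A}$ contains three arcs $\sigma_1,\sigma_2,\sigma_3$ between $a$ and $b$, then this triple agrees with Figure \ref{betweenpunc} up to a homeomorphism of $\Sigma$.

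First I would rule out that the three arcs are pairwise disjoint and then normalize a crossing pair. If two non-homotopic arcs between $a$ and $b$ are disjoint, their union is an embedded circle through $a,b$ bounding two discs in $\overline{\Sigma}$; an unpunctured disc would be a bigon realizing a homotopy between the arcs, so each disc contains a puncture, and as only $c,d$ remain we obtain two once-punctured bigons. A further arc between $a,b$ disjoint from both would lie in one such bigon, a copy of $\mathcal{D}_2$, and by Lemma \ref{tech} must be boundary-parallel, hence homotopic to $\sigma_1$ or $\sigma_2$ — a contradiction. So some pair, say $\sigma_1,\sigma_2$, crosses, and being in minimal position within a $1$-system they cross exactly once, at a point $r$. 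The graph $\sigma_1\cup\sigma_2$ then has vertices $a,b,r$ and four edges, hence three complementary faces on the sphere; the bigon criterion forces the two lens faces to contain $c$ and $d$, leaving the third face an unpunctured disc $W$. By the change-of-coordinates principle this picture is unique up to homeomorphism, so I may take $\sigma_1,\sigma_2$ to be the two crossing arcs of Figure \ref{betweenpunc}.

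It remains to locate $\sigma_3$, which is simple, non-homotopic to $\sigma_1,\sigma_2$, and meets each at most once. By the previous paragraph $\sigma_3$ cannot be disjoint from both, so a priori there are two combinatorial types: either $\sigma_3$ crosses both $\sigma_1$ and $\sigma_2$ once, or it crosses one of them once and is disjoint from the other. In the first type, cutting along $\sigma_1\cup\sigma_2$ and reading off the cyclic sequence of faces $W,B_c,B_d$ that $\sigma_3$ traverses pins its homotopy class down to the remaining arc of Figure \ref{betweenpunc}; the surgery here pushes any spare strand into the unpunctured face $W$, where the absence of a puncture yields a bigon or half-bigon that lowers the crossing count, so the only simple class non-homotopic to $\sigma_1,\sigma_2$ is the claimed one.

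The step I expect to be the main obstacle is excluding the second combinatorial type, in which $\sigma_3$ is disjoint from one of $\sigma_1,\sigma_2$. This is a genuinely different triple, of strictly smaller total intersection number and hence not homeomorphic to the first, so the normalization alone cannot dismiss it, and it is precisely here that maximality of $\mathbb{A}$ must enter. I would argue that such a degenerate triple caps the number of compatible arcs strictly below the maximal value $12$, by a region-by-region count in the spirit of Proposition \ref{nonsep}: cutting along the degenerate triple leaves once-punctured bigons and an unpunctured disc whose arc capacities, bounded through Lemma \ref{tech}, do not sum to $12$. Equivalently, the standard completing arc of Figure \ref{betweenpunc} is forced into any maximal $\mathbb{A}$ as some $\gamma$, and tracking the strands of $\gamma$ through $W$ and the lens faces shows that, once its single permitted crossings with $\sigma_1$ and $\sigma_2$ are spent, the remaining strands are trapped in $W$ and can be homotoped off a non-standard $\sigma_3$, contradicting $|\gamma\cap\sigma_3|>1$. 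The delicate point throughout is recording which of $c,d$ lies on each side of every strand, so that the bigon criterion applies unambiguously at each stage.
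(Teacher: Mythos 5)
Your first two paragraphs track the paper's proof closely and are sound: ruling out a pairwise disjoint triple via the two once-punctured bigons (the paper also invokes Lemma \ref{tech} here), and normalizing the crossing pair by the Euler count of faces plus the bigon criterion, so that the two lens faces contain $c$ and $d$ and the third face $W$ is an unpunctured square with corners $a,r,b,r$. The genuine gap comes immediately after. You assert that ``by the previous paragraph $\sigma_3$ cannot be disjoint from both,'' but that paragraph only excludes the case where \emph{all three} arcs are pairwise disjoint; it says nothing about a third arc disjoint from a crossing pair. And such an arc exists: an arc from $a$ to $b$ through the interior of the unpunctured face $W$ is essential and non-homotopic to $\sigma_1,\sigma_2$ (each complementary disc of its union with $\sigma_i$ contains one of $c,d$), yet meets neither. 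This is in fact exactly the green completing arc of Figure \ref{betweenpunc} --- which is why, later, the proof of Lemma \ref{no3} speaks of the permissible extra arcs lying \emph{entirely} in a single region, and why the strand-chasing of Figure \ref{redarc} takes place in the white region. So the case you declare impossible is the actual answer, and the two-crossing configuration you try to pin down in your case (a) is among those that must be \emph{excluded}; your classification is organized around the wrong normal form. (Your case (a) uniqueness claim is also shaky on its own terms: the cyclic face sequence does not distinguish which edge of $\sigma_1$ versus $\sigma_2$ is crossed, so it does not by itself pin a single homotopy class.)

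Beyond the misidentification, your treatment of the remaining case is only a sketch where the real work should be. The counting alternative (``caps the number of compatible arcs strictly below $12$'') is not carried out, and it is not clear the complement of a degenerate triple decomposes into pieces whose capacities Lemma \ref{tech} controls in the way you suggest. Your ``equivalently'' fallback is, in substance, the paper's actual argument, and it is worth noting how cleanly it avoids your casework entirely: since $\mathbb{A}$ is saturated, if the third arc of $\mathbb{A}$ between $\mathcal{Q}$ were not the green arc $g$, some $\gamma\in\mathbb{A}$ would have to meet $g$ at least twice. Tracking strands, the two strands on one side of $g$ must leave $W$ (else an unpunctured sub-disc of $W$ gives a bigon with $g$ --- note one cannot slip around the corner punctures $a,b$ without crossing $\sigma_1$ and $\sigma_2$, since the three curve-ends $e_i,f_i,g$ alternate there), and each exit spends the single allowed crossing with $\sigma_1$ or $\sigma_2$; the remaining strands are then trapped in $W$, producing a bigon and contradicting minimal position. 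Hence no arc of any $1$-system containing $\sigma_1,\sigma_2$ obstructs $g$, so $g\in\mathbb{A}$ by saturation, and the bound of three arcs between $\mathcal{Q}$ forces $\sigma_3\simeq g$ --- with no case analysis on how $\sigma_3$ meets $\sigma_1,\sigma_2$ at all.
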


\begin{lemma}\label{abcdefg}
For any maximal 1-system $\mathbb{A}$ and dual pair $\mathcal{Q},\mathcal{Q}^*$, there are always exactly four arcs in $\mathbb{A}$ where each arc is between $\mathcal{Q}$ or $\mathcal{Q}^*$. 
\begin{proof}
We first show for a dual pair, there are no more than $4$ arcs between $\mathcal{Q}$ or $\mathcal{Q}^*$. Assume up to relabelling, $\mathcal{Q}=\{a,b\}$. If there are more than $4$ arcs, necessarily either $\mathcal{Q}$ or $\mathcal{Q}^*$ must contain $3$ arcs by Lemma \ref{arcsbetween}. Without loss of generality, assume $\mathcal{Q}$ contains $3$ arcs. Assume the two arcs which intersect once are the orange and blue arcs in Figure \ref{betweenpunc}. Thus, we have more than four arcs iff there exist more than one arc between $\mathcal{Q}^*$. There is one arc, namely the horizontal one, between $\mathcal{Q}^*$ which intersects all three regions, implying it intersects both colored arcs. Thus, any nonhomotopic arc must necessarily intersect the orange or blue arc once again. This gives an upper bound of $4$ arcs for each dual pair. But this in turn gives a lower bound of $4$ arcs, as we must have a total of $12$ arcs, $3$ choices of $(\mathcal{Q},\mathcal{Q}^*)$, and no loops. 
\end{proof}
\end{lemma}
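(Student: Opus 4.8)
The plan is to prove the statement in two halves: an upper bound showing that at most four arcs lie between $\mathcal{Q}$ or $\mathcal{Q}^*$ for each dual pair, followed by a global counting argument that simultaneously upgrades every upper bound to an equality.

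For the upper bound, I would first invoke Lemma \ref{arcsbetween} to conclude that each of $\mathcal{Q}$ and $\mathcal{Q}^*$ carries at most three arcs. Suppose for contradiction that some dual pair carries at least five arcs; then by pigeonhole one member, say $\mathcal{Q}$, carries exactly three. Relabelling so that $\mathcal{Q}=\{a,b\}$, Lemma \ref{arcsbetween} places these three arcs in the standard configuration of Figure \ref{betweenpunc}, with two of them (the orange and blue) meeting once and cutting $\Sigma$ into regions that isolate the punctures of $\mathcal{Q}^*=\{c,d\}$. I would then analyze which arcs between $c$ and $d$ can be added without creating a double intersection: the key observation is that the single horizontal arc between $c$ and $d$ already crosses both the orange and blue arcs once, exhausting its entire $1$-system budget against those two arcs, so any further nonhomotopic arc between $c$ and $d$ is forced to cross one of them a second time. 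Hence $\mathcal{Q}^*$ contributes at most one arc, giving at most $3+1=4$ arcs and the desired contradiction.

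The lower bound is then purely combinatorial. Since we are in the $|\mathbb{J}|=0$ setting, there are no loops (by Lemma \ref{loopisajail}), so every arc of $\mathbb{A}$ is between two distinct punctures and is therefore between exactly one of the six unordered pairs; each such pair belongs to exactly one of the three dual pairs. Writing $N_i$ for the number of arcs counted by the $i$-th dual pair, the three counts partition all of $\mathbb{A}$, so $N_1+N_2+N_3=|\mathbb{A}|=12$. The upper bound gives $N_i\leq 4$ for each $i$; since they sum to $12$, each must equal $4$.

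I expect the main obstacle to be the geometric heart of the upper bound, namely rigorously ruling out a second arc between $\mathcal{Q}^*$ once three arcs between $\mathcal{Q}$ are fixed. This requires tracking how an arc between the isolated punctures $c,d$ must traverse the regions cut out by the standard configuration, and arguing via the bigon criterion that it cannot avoid a second crossing with one of the two intersecting arcs. The reasoning parallels the region-chasing already used in the proof of Lemma \ref{arcsbetween} (the ``top strands must leave the white region'' style of argument), so I would model it closely on that analysis rather than introduce new machinery.
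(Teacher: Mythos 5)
Your proposal is correct and follows essentially the same route as the paper: Lemma \ref{arcsbetween} plus the standard three-arc configuration to show the unique ``horizontal'' class is the only arc between $\mathcal{Q}^*$ compatible with three arcs between $\mathcal{Q}$ (upper bound of $4$), then the count $12$ arcs, no loops, three dual pairs partitioning the six puncture pairs to force equality. The only cosmetic point is your phrase about the horizontal arc ``exhausting its budget'' --- the correct justification, which you clearly intend and which matches the paper, is that any arc between $\mathcal{Q}^*$ must cross both colored arcs, and only one homotopy class does so exactly once each.
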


\begin{corollary}\label{3122} For any maximal 1-system $\mathbb{A}$ and dual pair $\mathcal{Q},\mathcal{Q}^*$, exactly one of the two cases occurs:
\begin{enumerate}
    \item There are 3 arcs between $\mathcal{Q}$ and $1$ arc between $\mathcal{Q}^*$.
    \item There are 2 arcs between $\mathcal{Q}$ and $2$ arcs between $\mathcal{Q}^*$.
\end{enumerate}
\begin{proof}
Combine Lemmas \ref{arcsbetween} and \ref{abcdefg}.
\end{proof}
\end{corollary}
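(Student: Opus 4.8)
The plan is to reduce the statement to elementary counting, since the two preceding lemmas already carry all of the geometric weight. Write $m$ for the number of arcs of $\mathbb{A}$ between $\mathcal{Q}$ and $n$ for the number between $\mathcal{Q}^*$; the entire content of the corollary is a constraint on the pair $(m,n)$, and Lemmas \ref{arcsbetween} and \ref{abcdefg} pin it down completely.

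First I would invoke Lemma \ref{abcdefg}, which asserts that the arcs between $\mathcal{Q}$ together with those between $\mathcal{Q}^*$ number exactly four. Since an arc between $\mathcal{Q}$ can never be homotopic to one between $\mathcal{Q}^*$ (their endpoint sets are the disjoint pairs $\mathcal{Q}$ and $\mathcal{Q}^*$), these two families are disjoint, so the total count splits as $m+n=4$. Next I would apply Lemma \ref{arcsbetween} separately to each member of the dual pair, obtaining $m\le 3$ and $n\le 3$.

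Combining these, the nonnegative integer solutions of $m+n=4$ subject to $m,n\le 3$ are exactly $(m,n)\in\{(3,1),(2,2),(1,3)\}$; the extreme partitions $(4,0)$ and $(0,4)$ are precisely the ones excluded by Lemma \ref{arcsbetween}. Because a dual pair is unordered — formally $(\mathcal{Q}^*)^*=\mathcal{Q}$, so interchanging the roles of $\mathcal{Q}$ and $\mathcal{Q}^*$ changes nothing — the solutions $(3,1)$ and $(1,3)$ describe the very same configuration, namely three arcs between one pair and one arc between its dual. This is case (1), while $(2,2)$ is case (2). The two cases are mutually exclusive, as a partition of $4$ into two parts is either $3+1$ or $2+2$ but never both, so exactly one of them occurs.

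I do not expect a genuine obstacle here: the delicate surface topology has been spent in Lemmas \ref{arcsbetween} and \ref{abcdefg}, and what remains is bookkeeping. The only point that deserves a moment's care is the \emph{exactly one} clause, which requires checking both that the two listed cases exhaust the solution set of $m+n=4$ and that they are disjoint, together with the observation that the dual-pair symmetry folds $(3,1)$ and $(1,3)$ into a single alternative so that the two cases are genuinely distinct.
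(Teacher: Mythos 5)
Your proposal is correct and is essentially the paper's own argument: the paper's proof is literally ``Combine Lemmas \ref{arcsbetween} and \ref{abcdefg},'' and your counting — $m+n=4$ from Lemma \ref{abcdefg}, $m,n\le 3$ from Lemma \ref{arcsbetween}, then folding $(3,1)$ and $(1,3)$ together via $(\mathcal{Q}^*)^*=\mathcal{Q}$ — is exactly what that combination unpacks to. No gaps; your explicit treatment of the ``exactly one'' clause is a sound elaboration of what the paper leaves implicit.
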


Our next goal is to show that the first case does not extend to a maximal 1-system, while the second case extends to a unique maximal 1-system. 

\begin{lemma}\label{no3}
Any 1-system with a $\mathcal{Q}$ such that 3 arcs are between $\mathcal{Q}$ does not extend to a maximal 1-system. 
\begin{proof}
Up to relabelling, let us assume $\mathcal{Q}=\{a,b\}$, and two of the intersecting arcs to be the orange and blue arcs as in Figure \ref{sigmaqr}. Taking $\Tilde{\mathcal{Q}}=\{a,d\}$, thus $\Tilde{\mathcal{Q}}^*=\{b,c\}$, we see that in neither case can we have more than one arc between $\Tilde{\mathcal{Q}}$ and $\Tilde{\mathcal{Q}}^*$. The only permissible arc between $\Tilde{\mathcal{Q}}$ and $\Tilde{\mathcal{Q}}^*$ lies entirely in the white and orange region respectively. Any other arc would lie in all three regions, which is disallowed by the pigeonhole principle and the definition of a 1-system. This contradicts Lemma \ref{3122}. 
\end{proof}
\end{lemma}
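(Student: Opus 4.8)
The plan is to argue by contradiction: suppose such a $1$-system does extend to a maximal $1$-system $\mathbb{A}$, and derive a violation of Lemma \ref{abcdefg} by inspecting a \emph{different} dual pair. After relabelling I may assume the three arcs lie between $\mathcal{Q}=\{a,b\}$. First I would invoke Lemma \ref{arcsbetween} to normalize these three arcs, up to a homeomorphism of $\Sigma$, to the standard picture of Figure \ref{betweenpunc}: two of them, say $\sigma_1,\sigma_2$, meet exactly once at a point $r$ and (as in Figure \ref{sigmaqr}) cut off two once-punctured bigons containing the two remaining punctures $c,d$, while the third arc $\sigma_3$ is disjoint from both and runs through the complementary white region. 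This fixes, once and for all, a decomposition of $\Sigma$ into regions together with the location of $c$ and $d$.

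Next I would pass to the \emph{transverse} dual pair $\tilde{\mathcal{Q}}=\{a,d\}$ and $\tilde{\mathcal{Q}}^*=\{b,c\}$, which shares a single puncture with $\mathcal{Q}$, and bound the number of arcs of $\mathbb{A}$ lying between $\tilde{\mathcal{Q}}$ or $\tilde{\mathcal{Q}}^*$. Any such arc $\gamma$ is forced, since $\{\sigma_1,\sigma_2,\sigma_3,\gamma\}$ is a $1$-system, to cross each $\sigma_i$ at most once. Tracing the admissible routes of $\gamma$ through the region decomposition of the previous step, the claim I would establish is that essentially one route survives on each side: there is a unique homotopy class between $a$ and $d$ and a unique class between $b$ and $c$, each confined to a single region (the white region for $\{a,d\}$ and one colored region for $\{b,c\}$). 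Concretely, I would show that any other candidate is forced to traverse all three regions cut out by $\sigma_1,\sigma_2,\sigma_3$, and then a pigeonhole count of its crossings against the three arcs produces a double intersection with some $\sigma_i$, which is disallowed in a $1$-system.

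Granting this, there is at most one arc between $\tilde{\mathcal{Q}}$ and at most one between $\tilde{\mathcal{Q}}^*$, so at most two arcs of $\mathbb{A}$ are between $\tilde{\mathcal{Q}}$ or $\tilde{\mathcal{Q}}^*$. This directly contradicts Lemma \ref{abcdefg}, which forces exactly four such arcs in the maximal $1$-system $\mathbb{A}$ (recall that in the $|\mathbb{J}|=0$ setting there are no loops by Lemma \ref{loopisajail}), and completes the argument. I expect the main obstacle to be the region/pigeonhole step: making precise the assertion that ``traversing all three regions forces a double crossing'' requires keeping careful track of the adjacencies between the faces cut out by $\sigma_1,\sigma_2,\sigma_3$ and of which faces the endpoints $a,d$ (respectively $b,c$) are incident to. This bookkeeping is cleanest when read off directly from the normalized configuration of Figure \ref{betweenpunc}, rather than argued abstractly.
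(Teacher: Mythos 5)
Your proposal is correct and takes essentially the same route as the paper's own proof: normalize the three arcs via Lemma \ref{arcsbetween} to the configuration of Figures \ref{betweenpunc} and \ref{sigmaqr}, pass to the transverse dual pair $\tilde{\mathcal{Q}}=\{a,d\}$, $\tilde{\mathcal{Q}}^*=\{b,c\}$, argue via the region decomposition and a pigeonhole count that at most one arc survives on each side, and contradict the forced count of four arcs for every dual pair. The only cosmetic difference is that you contradict Lemma \ref{abcdefg} directly where the paper cites Corollary \ref{3122}, which is just that lemma combined with Lemma \ref{arcsbetween}, so the arguments are the same.
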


If fact, our proof shows a bit more. In the remaining case, where both $\mathcal{Q}$ and $\mathcal{Q}^*$ have two arcs between each, the two arcs between $\mathcal{Q}$ (and similarly $\mathcal{Q}^*$) must as in Figure \ref{22hello}.

\begin{figure}[htp]
    \centering
    \includegraphics[width=3cm]{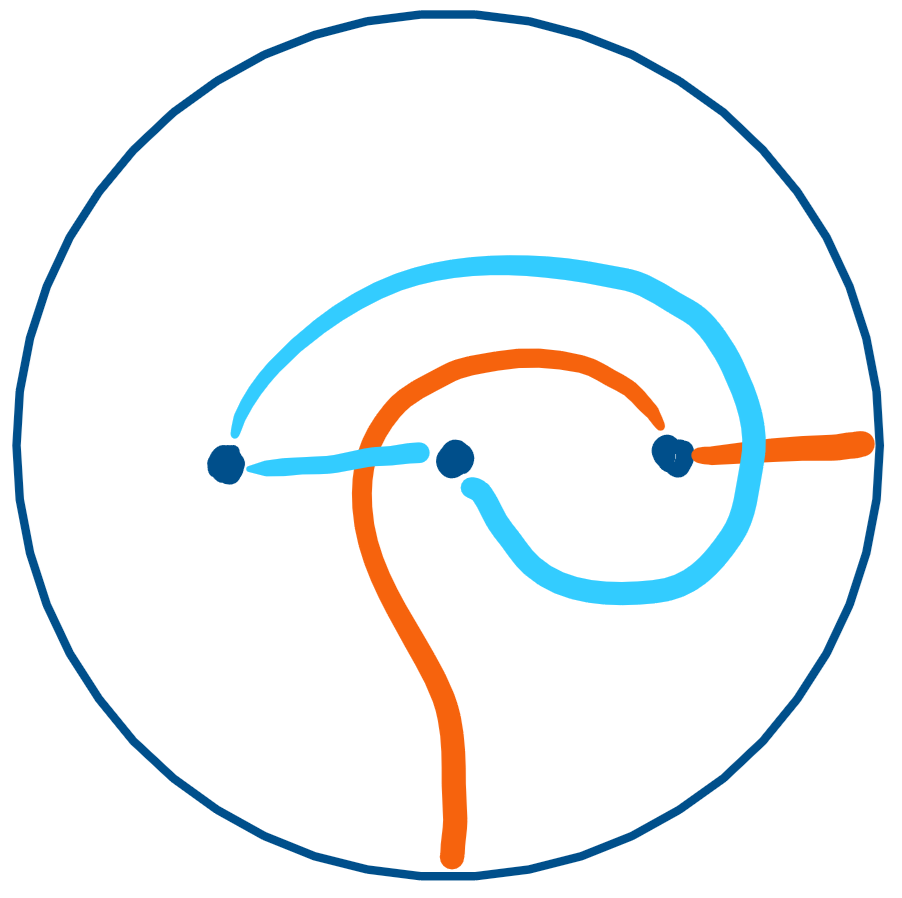}
    \caption{Two arcs between both $\mathcal{Q},\mathcal{Q}^*$}
    \label{22hello}
\end{figure}

\begin{lemma}\label{22looks}
Figure \ref{22hello} classifies the arcs between two punctures. 
\begin{proof}
By our above discussion, we know the blue (similarly orange) arcs must be disjoint. To see the blue arcs determine the orange arcs, it remains to argue each orange arc must intersect exactly one blue arc. Note that any orange arc must intersect at least one blue arc, to be between $\mathcal{Q}^*$. The orange arc intersects at most one blue arc, else we contradict the definition of a 1-system. 
\end{proof}
\end{lemma}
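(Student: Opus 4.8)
The plan is to take from the preceding discussion that in the remaining $2$-$2$ case the two arcs between $\mathcal{Q}$ (blue), call them $\sigma_1,\sigma_2$, are disjoint and nonhomotopic, and likewise the two arcs between $\mathcal{Q}^*$ (orange), call them $\tau_1,\tau_2$; what remains is to show the blue arcs determine the orange arcs up to homeomorphism. Write $\mathcal{Q}=\{a,b\}$ and $\mathcal{Q}^*=\{c,d\}$.

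First I would describe the blue configuration intrinsically. The arcs $\sigma_1,\sigma_2$ glue into a simple closed curve through $a,b$ that separates $\overline{\Sigma}$ into two disks. If $c$ and $d$ lay in the same disk, the other disk would be an unpunctured bigon bounded by $\sigma_1,\sigma_2$, forcing them homotopic; hence $c$ and $d$ lie in opposite disks, and the blue picture is fixed up to a homeomorphism of $\Sigma$.

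Next I would pin down how each orange arc meets the blue curve, which is the heart of the lemma. Since $c$ and $d$ lie on opposite sides of the blue curve, every arc between them crosses it, so it meets at least one blue arc; and in a $1$-system it meets each blue arc at most once, so it crosses the blue curve at most twice. A path switching sides of a separating curve crosses it an odd number of times, so each orange arc crosses the blue curve exactly once, that is, it meets exactly one of $\sigma_1,\sigma_2$ in a single point. This is precisely the pair of facts isolated in the statement.

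Finally I would rule out $\tau_1$ and $\tau_2$ crossing the \emph{same} blue arc. By the symmetric version of the first step the orange arcs separate $a$ from $b$; yet if both orange arcs avoided, say, $\sigma_2$, then $\sigma_2$ would join $a$ to $b$ without meeting the orange curve, placing $a,b$ on the same side of it and contradicting that $\tau_1,\tau_2$ are disjoint and nonhomotopic. Hence $\tau_1$ meets one blue arc and $\tau_2$ the other, which determines the configuration uniquely as Figure \ref{22hello}. I expect the main obstacle to be exactly this last uniqueness step: the single-crossing count still leaves the a priori possibility of both orange arcs hitting the same blue arc, and dispatching it cleanly requires invoking the dual separation property rather than a direct intersection estimate.
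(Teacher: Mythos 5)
Your proof is correct and follows the same skeleton as the paper's: the disjointness of the blue pair (and of the orange pair) is imported from the preceding discussion, and the heart of the matter is showing each orange arc meets exactly one blue arc. Two of your refinements, however, go beyond what the paper actually writes down, and both are genuine improvements. First, where the paper dismisses an orange arc meeting both blue arcs with the bare phrase ``else we contradict the definition of a 1-system,'' you correctly recognize that meeting each blue arc once is not by itself a violation of the 1-system condition; the missing ingredient is exactly your parity argument --- the orange arc must cross the separating closed curve $\sigma_1\cup\sigma_2\cup\{a,b\}$ an odd number of times, and it crosses each blue arc at most once, so it crosses exactly once. Second, the paper stops at ``each orange arc meets exactly one blue arc'' and concludes the configuration of Figure \ref{22hello} is determined, silently skipping the a priori possibility that $\tau_1$ and $\tau_2$ both cross the \emph{same} blue arc; your dual-separation argument (if both missed $\sigma_2$, then $\sigma_2$ would join $a$ to $b$ without crossing the orange closed curve, which separates them) dispatches this case cleanly, and your instinct that this is the main obstacle is sound. (An alternative closing of that case: two disjoint orange arcs each crossing $\sigma_1$ exactly once would, after cutting along the blue curve, cobound unpunctured regions in each half-disk and hence be homotopic, contradicting nonhomotopy.) In short, your proposal is not merely correct along the paper's route --- it is a more complete version of the paper's proof, filling in two steps the paper leaves implicit.
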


\begin{figure}[htp]
    \centering
    \includegraphics[width=5cm]{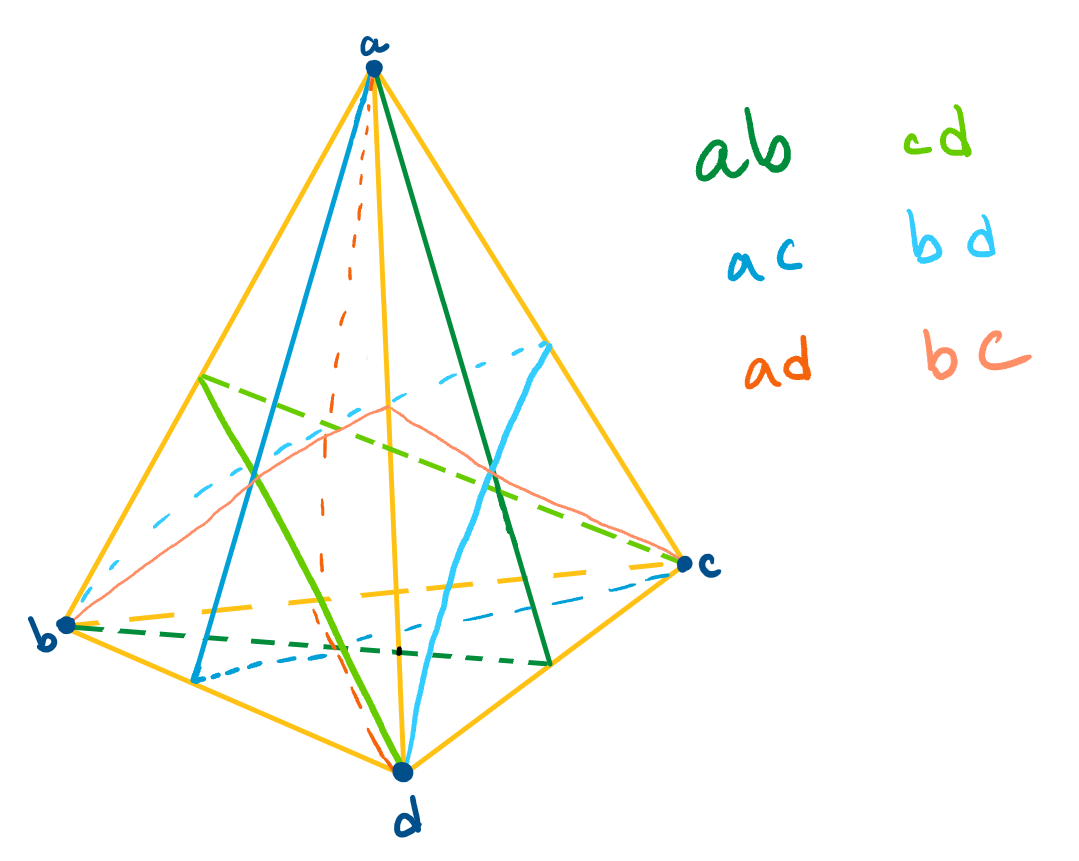}
    \caption{$J_0$ extended from pyramid}
    \label{J0}
\end{figure}

\begin{proof}[Proof of Proposition~\ref{classify0}]
We first prove that every maximal 1-systems contains the blue, orange and gray arcs in Figure \ref{fourregions}. By Lemma \ref{22looks}, we can take the blue and orange arcs to be in $\mathbb{A}$. 

\begin{figure}[htp]
    \centering
    \includegraphics[width=4cm]{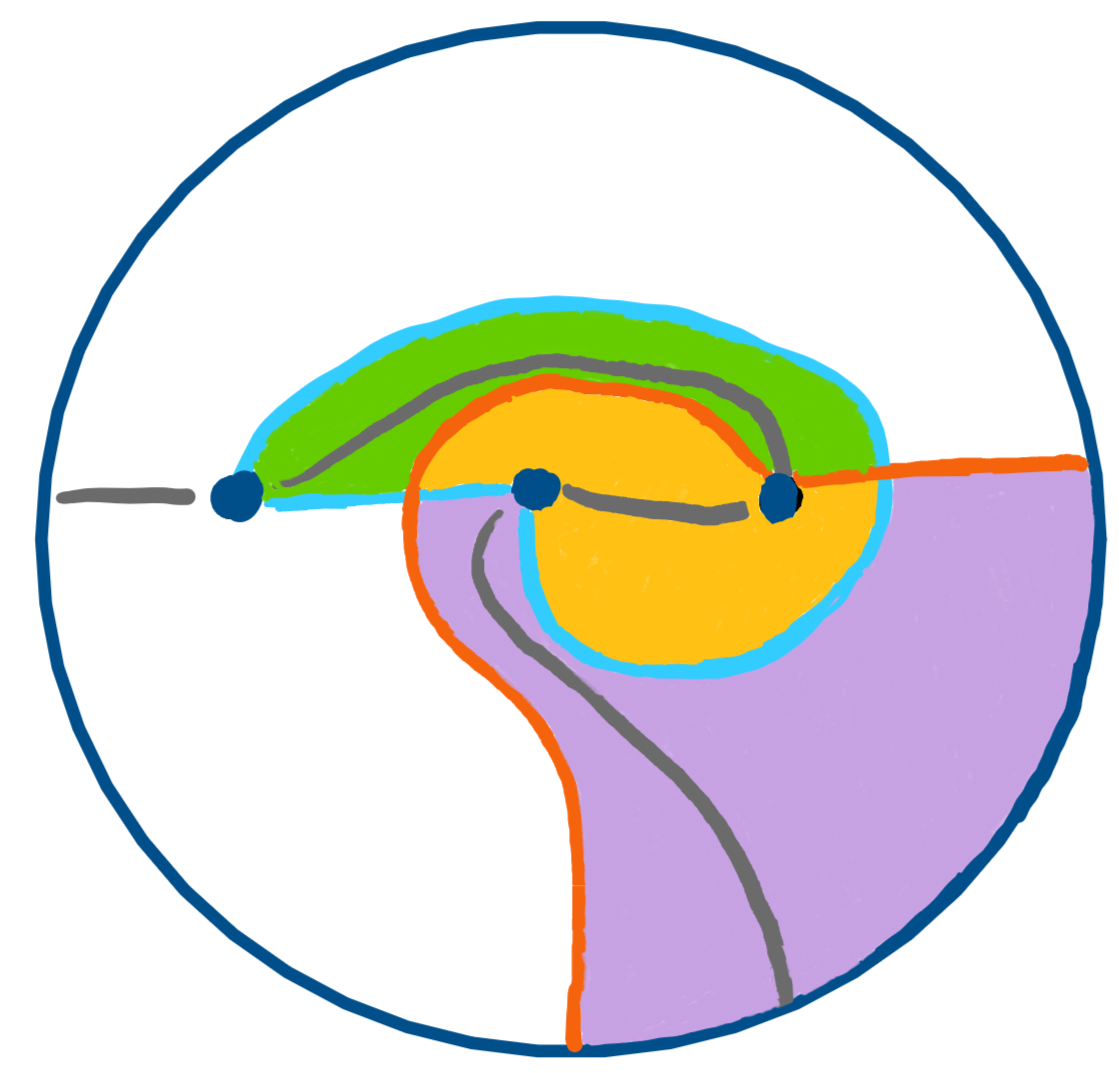}
    \caption{Four regions (white, green, yellow, purple) with gray arcs }
    \label{fourregions}
\end{figure}

The blue and orange arcs divide $\Sigma$ into four regions as in Figure \ref{fourregions}. Note also the four gray arcs which are entirely contained in a region. Let us show that the gray arc in the yellow region is contained in $\mathbb{A}$; the same argument will show the other gray arcs are also contained in $\mathbb{A}$. 

\begin{figure}[htp]
    \centering
    \includegraphics[width=8cm]{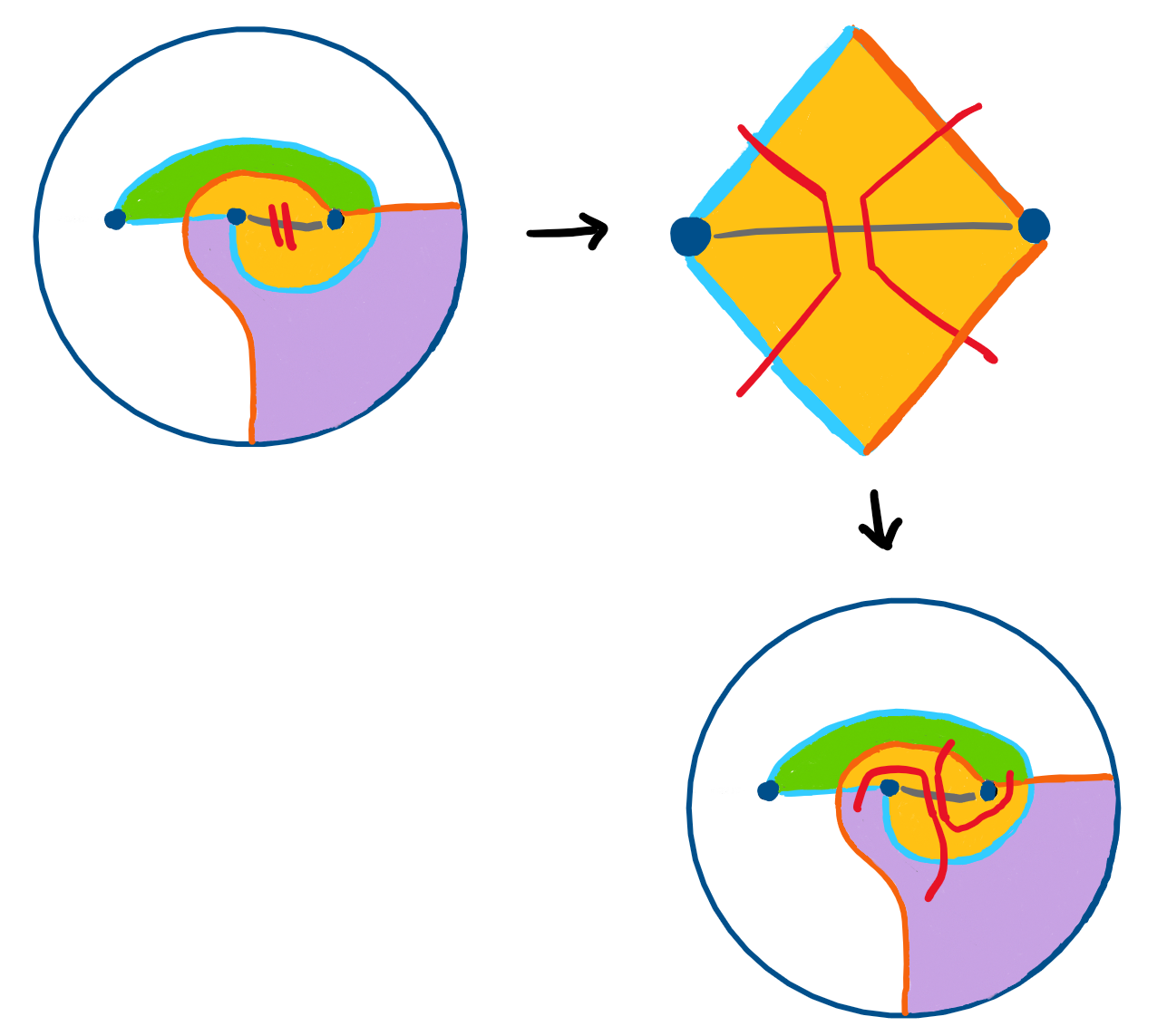}
    \caption{Gray arc must be contained in $\mathbb{A}$}
    \label{kkbonito}
\end{figure}

Suppose not, so for some maximal 1-system $\mathbb{A}$, there exists some arc which intersects the gray arc twice. This is illustrated in Figure \ref{kkbonito} by the red arc. The second panel ``zooms in" to show which curves the red arc must intersect by definition of 1-system. The last panel ``zooms out" to show the red arc cannot possibly close up, as it has already intersected with each of the four arcs once. This yields the desired contradiction. 

\begin{figure}[htp]
    \centering
    \includegraphics[width=9cm]{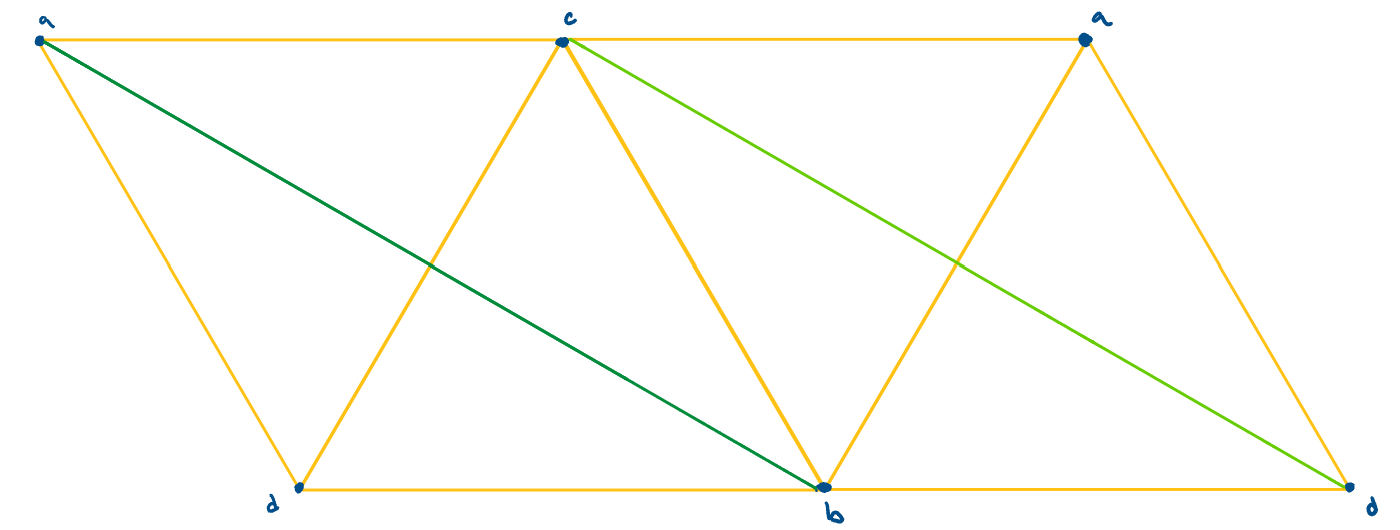}
    \caption{Arcs in Figure \ref{fourregions}}
    \label{unroll}
\end{figure}

Now that we know that $\mathbb{A}$ contains the arcs in Figure \ref{fourregions}, we will prove that $\mathbb{A}$ uniquely extends to Figure \ref{J0}. We begin by observing the arcs in Figure \ref{fourregions} are exactly the yellow arcs forming the pyramid and the two green arcs in Figure \ref{J0}. Thus, in the following, we say \textit{faces} to denote faces of the pyramid as in $\ref{J0}$. 

\begin{figure}[htp]
    \centering
    \includegraphics[width=8cm]{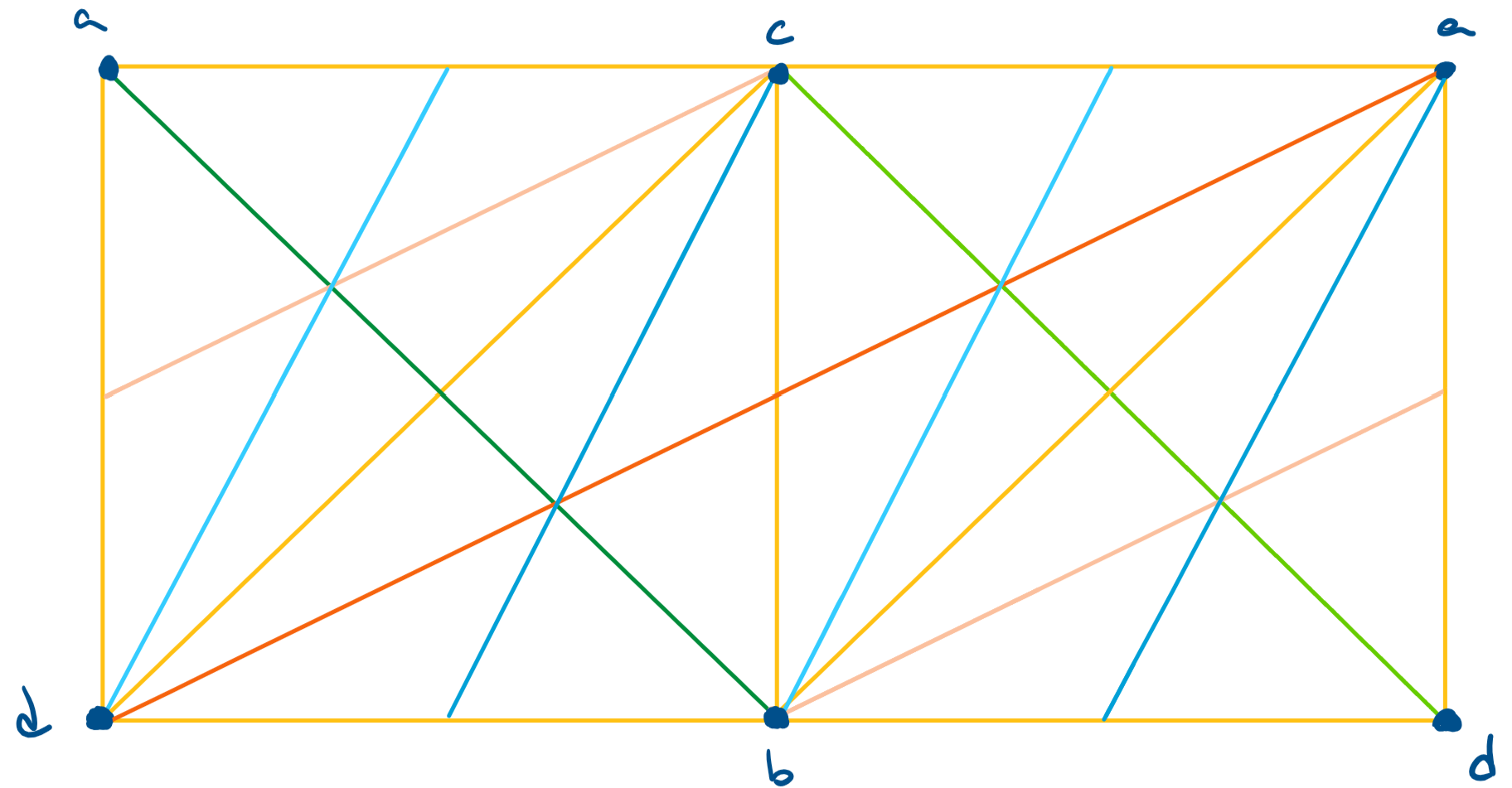}
    \caption{Unique Maximal 1-system $J_0$}
    \label{diagonaladbc}
\end{figure}

The arcs in Figure \ref{fourregions} unroll into Figure \ref{unroll}, a parallelogram. We will discuss,
given $\mathcal{Q}$, what can be the arc between  $\mathcal{Q}$ in $\mathbb{A}$ distinct from an edge of the pyramid which is guaranteed
by Lemma \ref{no3}. An arc through one face is homotopic to an arc in \ref{fourregions}, and through three implies the arc is a loop. Thus for $\mathcal{Q}=\{a,d\}$, it must be one of the two diagonals of the parallelogram. Tilting the parallelogram to a rectangle, we see these two choices are equivalent up to a  reflection through the vertical line through $b,c$. Without loss of generality, let us choose the one indicated by dark orange in Figure \ref{diagonaladbc}. But this choice determines an arc between $b,c$ by Lemma \ref{22looks} as indicated by the light orange arcs. It remains to choose an arc between $a,c$. Arcs through four faces either intersect the green or orange arc more than once. Thus, the only arc between $a,c$ is the one through two faces as seen in the blue arc in Figure \ref{diagonaladbc}. This uniquely determines the arc between $b,d$ indicated by light blue, and we see this is equivalent to Figure \ref{J0}.
\end{proof}

\printbibliography
\end{document}